\DeclareRobustCommand\widecheck[1]{
	{\mathpalette\@widecheck{#1}}}
\def\@widecheck#1#2{
		\setbox\z@\hbox{\m@th$#1#2$}
	\setbox\tw@\hbox{\m@th$#1%
		\widehat{%
			\vrule\@width\z@\@height\ht\z@
			\vrule\@height\z@\@width\wd\z@}$}%
	\dp\tw@-\ht\z@
	\@tempdima\ht\z@ \advance\@tempdima2\ht\tw@ \divide\@tempdima\thr@@
	\setbox\tw@\hbox{%
		\raise\@tempdima\hbox{\scalebox{1}[-1]{\lower\@tempdima\box
				\tw@}}}%
	{\ooalign{\box\tw@ \cr \box\z@}}}
\newtheorem{theorem}{Theorem}
\newtheorem{lemma}{Lemma}
\newtheorem{proposition}{Proposition}
\newtheorem{corollary}{Corollary}
\newtheorem{remark}{Remark}
\newtheorem{definition}{Definition}
\numberwithin{equation}{section}
\newcommand{\supp}{\operatorname{supp}}
\newcommand{\dive}{\operatorname{div}}
\newcommand{\mR}{\mathbb{R}}
\newcommand{\ren}{\mathbb{R}^N}
\title{\bf Porous medium equation with nonlocal \\[6pt] pressure in a bounded domain}
\author{{\bf Quoc-Hung Nguyen\thanks{ E-mail address: quochung.nguyen@sns.it, Scuola Normale Superiore, Centro Ennio de Giorgi, Piazza dei Cavalieri 3, I-56100
			Pisa, Italy.}}~~\\ and \\
		{\bf Juan Luis V\'azquez\thanks{ E-mail address: juanluis.vazquez@uam.es, Departamento de Matem\'aticas, Universidad Aut\'onoma de
Madrid,  28049 Madrid, Spain.}}}
\begin{document}

\date{ }

\maketitle

\begin{abstract}
 We study a quite general family of nonlinear evolution equations of diffusive type with nonlocal effects. More precisely, we study porous medium equations with a fractional Laplacian pressure, and the problem is posed on a bounded space domain. We prove existence of weak solutions and suitable a priori bounds and regularity estimates.

\end{abstract}


\footnotesize
\tableofcontents
\normalsize

\

\section{Introduction}\label{sec.intro}	

Let $N \ge 1$  and $\Omega$ be a  bounded  open subset of $\mR^N$ with smooth boundary $\partial \Omega$. In this paper we study the following family of nonlinear evolution equations of diffusive type with nonlocal effects
\begin{equation}\label{pro1}
\left\{
\begin{array}
{ll}%
\partial_t u-\operatorname{div}\left(|u|^{m_1}\,\nabla  (-\Delta)^{-s}  (|u|^{m_2-1}u)\right)=f~~&\text{in }\Omega_T=\Omega\times (0,T),\\[6pt]
u(x,t)=0 &\text{on}
~~\partial\Omega\times (0,T),
\\[6pt]
u(x,0)=u_0(x) &\text{in}
~~\Omega,
\\
\end{array}
\right.
\end{equation}
where $u_0, f$ are bounded functions or bounded Radon measures in $\Omega$ and $\Omega_T$  respectively, and $m_1, m_2>0$. The symbol $(-\Delta)^{-s}$ with $0<s<1$ denotes the inverse of the {\sl spectral fractional Laplacian operator } with zero Dirichlet outer conditions, which is defined as follows: we denote by $-\Delta$ the Laplacian operator with homogeneous Dirichlet boundary conditions on $\Omega$. Its $L^2(\Omega)$-normalized eigenfunctions
are denoted $\varphi_j$, and its eigenvalues counted with their multiplicities are denoted $\lambda_j$:  \ $-\Delta \varphi_j=\lambda_j\varphi_j$. It is well known that
$$
0<\lambda_1\leq...\leq\lambda_j\leq...,\qquad \mbox{and } \lambda_j \asymp j^{2/N},
$$
and that $-\Delta$ is a positive self-adjoint operator in $L^2(\Omega)$ with
domain $D(-\Delta) = H^2(\Omega)\cap H^1_0(\Omega)$. The ground state $\varphi_1$ is positive and $\varphi_1(x)\asymp d(x)$ for all $x\in \Omega$, where $d(x)$ denotes distance from $x$ to boundary $\partial\Omega$. For all $0<s<1$  we define the  spectral fractional Laplacian  $(-\Delta)^{s}$ by
\begin{equation}
(-\Delta)^{s}f=\sum_{j=1}^{\infty}\lambda_j^{s}f_j \varphi_j,
\qquad f_j=\int_{\Omega}f(x)\varphi_j(x)dx\,.
\end{equation}
This formula is equivalent to the semigroup formula
\begin{equation}
(-\Delta)^{s}f= \frac1{\Gamma(-s)}\int_0^\infty
\left(e^{t\Delta}f(x)-f(x)\right)\frac{dt}{t^{1+s}},
\end{equation}
see  \cite{Davi,Davies1}. In Section \ref{sec.proof.th1} we use will yet another equivalent characterization of the spectral fractional Laplacian in terms of the so-called cylinder Caffarelli-Silvestre extension, as introduced by \cite{Cabre-Tan} in this context.

 Our aim of this paper is prove  the existence of possibly sign-changing, weak solutions to  Problem \eqref{pro1} for any $m_1,m_2>0$. Moreover we show that these solutions satisfy a smoothing effect estimate and possess a universal bound when $f=0$.

Some previous literature: this equation has been studied in the whole space $\ren$ as a model for porous medium flows with fractional nonlocal pressure in the case  $m_1=m_2=1$ by Caffarelli and the second author in \cite{CV1}. It is the most relevant case of the class of equations of the more general form
\begin{equation*}
\qquad \frac{\partial \rho}{\partial t}=\nabla   \cdot\left[\sigma_s(\rho)\nabla
\frac{\delta F(\rho)}{\delta \rho }\right]\,,
\end{equation*}
that arise in the description of the macroscopic evolution of particle systems with long range interactions, \cite{GL97, GLM2000}. Here, $\rho(x,t)\ge 0$ is the macroscopic  density, $F$ is a free energy functional, and the mobility function $\sigma_s(\rho)\ge 0$ may be degenerate, i.e., it may vanish for some values of $\rho$ (in our case \eqref{pro1} we have $\sigma_s(\rho)=|\rho|^{m_1}$ that vanishes at $\rho=0$).

The same equation as in \cite{CV1} appears in a one-dimensional model in dislocation theory that has also been studied by Biler et al. \cite{BKM10}. Later mathematical works include \cite{CV2, CSV, CV3}, where regularity and asymptotic  behaviour are established, paper \cite{BiImKa} that treats the case $m_1=1$, $m_2>\max\{\frac{1-2s}{1-s},\frac{2s-1}{N}\}$, and the works \cite{StTsVz.CRAS, StTsVz16, StTsVz17} that treat the cases where $m_1\ne 1$, and \cite{StTsVz15} that treats general exponents, see also \cite{DZ2017}.

In the limit case $m_1=0$ we obtain a different type of equation
 $$
 \partial_t u+(-\Delta)^{1-s} ( |u|^{m-1}u)=f,
 $$
 that has received many contributions, starting with \cite{pqrv1, pqrv2}. In all those works the forcing term $f=0$ is put to zero.  See  \cite{VazCime} for a general reference on recent work on nonlinear diffusion.

No works seem to have treated the same problem posed in a bounded domain when $m_1\ne 0$. As said above, we address this issue in the case where the fractional operator $(-\Delta)^{-s}$ is the inverse of the spectral fractional Laplacian operator. Attention is also paid to $f\ne 0$.

\medskip

\noindent {\bf \large Definition and main results}

We introduce next our main contributions.
In this paper, we  put $\gamma:=m_1+m_2$, this parameter will appear often.
This is the definition of weak solution that we are going to use

\begin{definition} Let $u_0\in L^1(\Omega)$ and $f\in L^1(0,T,(W_0^{1,\infty}(\Omega))^*)$. We say that $u$ is a weak solution  of problem \eqref{pro1} if
	
\noindent (i) $u\in L^{\max\{1,\gamma\}}(\Omega_T)$,
	
\noindent (ii) $\operatorname{div}(|u|^{m_1}\nabla (-\Delta)^{-s}|u|^{m_2-1}u)\in L^1(0,T,(W_0^{2,\infty}(\Omega))^*)$,  and
	\begin{align*}
	-\int_{0}^{T}\int_\Omega u\phi_tdxdt+\int_{0}^{T} \langle \ |u|^{m_1}\nabla (-\Delta)^{-s}(|u|^{m_2-1}u),\nabla \phi\rangle \,dt= \int_\Omega\phi(0)u_0\,dx+\int_{0}^{T}\langle f(t),\phi(t)\rangle \,dt
	\end{align*}
	for all $\phi\in C_c^2(\Omega\times[0,T))$.
\end{definition}
In general, we can not have $|u|^{m_1}\nabla (-\Delta)^{-s}|u|^{m_2-1}u\in L^1(\Omega_T)$, thus we can not replace (ii) in the definition by $|u|^{m_1}\nabla (-\Delta)^{-s}|u|^{m_2-1}u\in L^1(0,T,(W_0^{1,\infty}(\Omega))^*)$. See more on this in Lemma \ref{le-f}.


The following result contains the basic existence and main properties.

\begin{theorem}\label{mainthm} Let  $u_0\in L^\infty(\Omega)$ and $f\in L^\infty(\Omega_T)$. Then, there exists a weak solution $u$ of Problem \eqref{pro1} such that $u\in L^\infty(\Omega_T)$, $(-\Delta)^{\frac{1-s}{2}}(|u|^{\gamma-1}u)\in L^2(\Omega_T)$. Moreover, $u$ has the following properties:

\noindent {\rm \bf (I)} Basic $L^1$ estimate: for every $t>0$
\begin{equation}
||u^\pm||_{L^\infty(0,T,L^1(\Omega))}\leq ||u_0^\pm||_{L^1(\Omega)}+||f^\pm||_{L^1(\Omega_T)}\,.
\end{equation}
In particular, If $u_0,f\geq 0$, then $u\geq 0$ in $\Omega_T$.

\noindent {\rm \bf (Ia)}  We have the three-option estimate
	\begin{align}
&1_{s<1-\frac{N}{2}}||u||_{L^{\gamma+\frac{2(1-s)}{N},\infty}(\Omega_T)}+1_{s=1-\frac{N}{2}}||u||_{L^{\gamma+1-\frac{1}{r},\infty}(\Omega_T)}+1_{s>1-\frac{N}{2}}||u||_{L^{\gamma+1,\infty}(\Omega_T)}\\&\nonumber\leq C 1_{s<1-\frac{N}{2}}M^{\frac{N+2(1-s)}{\gamma N-2(1-s)}}+C1_{s=1-\frac{N}{2}}M^{\frac{2r}{r(\gamma+1)-1}}+C1_{s>1-\frac{N}{2}}M^{\frac{2}{\gamma+1}},
\end{align}
where $M=||u_0||_{L^1(\Omega)}+||f||_{L^1(\Omega_T)}$.

\noindent {\rm \bf (Ib)} Moreover,
\begin{align}
\int_{0}^T\int_\Omega \left|(-\Delta)^{\frac{1-s}{2}}\left(\frac{|u|^{\frac{\gamma}{2}+\theta-1}u}
{|u|^{2\theta}+1}\right)\right|^2dxdt\leq C(\theta)M~~ \forall \theta>0,
\end{align}

\noindent {\rm \bf (II)} for $p\in (1,\infty)$ and for all $t\in (0,T)$
\begin{align}\label{es62}
\int_\Omega |u(t)|^p& +\frac{4m_2p(p-1)}{(\gamma+p-1)^2}\int_{0}^{t}\,\int_{\Omega}
\left| (-\Delta)^{\frac{1-s}{2}}(|u|^{\frac{\gamma+p-1}{2}-1}u)\right|^2
\\& \leq \int_\Omega |u_0|^p+p\int_{0}^{t}\int_{\Omega} |f||u|^{p-1}. \nonumber
\end{align}

\noindent {\rm \bf (III)} $L^\infty$ bounds:
\begin{align}
||u||_{L^\infty(\Omega_T)}\leq ||u_0||_{L^\infty(\Omega)}+T||f||_{L^\infty(\Omega_T)}.
\end{align}

\noindent {\rm \bf(IV)} Smoothing effect: Assume $f=0$,
\begin{align}\label{sm.eff}
||u(t)||_{q}\leq  C ||u_0||_{L^{q_0}(\Omega)}^{\frac{N(\gamma-1)\frac{q_0}{q}+2q_0(1-s)}{N(\gamma-1)+
2q_0(1-s)}}t^{-\frac{(1-\frac{q_0}{q})N}{N(\gamma-1)+2q_0(1-s)}}~~\forall q\in [q_0,\infty]\,,
\end{align}
and
\begin{align}\label{sm.eff'}
\int_{\Omega}| (-\Delta)^{\frac{1-s}{2}}(|u(t)|^{\frac{\gamma+q-1}{2}})|^2dx\leq  C ||u_0||_{L^{q_0}(\Omega)}^{\frac{N(\gamma-1)q_0+2q_0q(1-s)}{N(\gamma-1)+
2q_0(1-s)}}t^{-\frac{(q-q_0)N}{N(\gamma-1)+2q_0(1-s)}-1}~\forall q\in [q_0,\infty)\cap(1,\infty).
\end{align}
provided $q_0\geq 1$ and $N(\gamma-1)+2q_0(1-s)>0$.
\end{theorem}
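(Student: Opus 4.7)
I would attack this by a two-level approximation scheme: construct smooth solutions $u_\varepsilon$ of regularized problems, derive the bounds (I)--(IV) uniformly in $\varepsilon$, and then pass to the limit. Concretely, replace the degenerate mobility $|u|^{m_1}$ by $(u^2+\varepsilon^2)^{m_1/2}$, add a vanishing viscosity $\varepsilon\Delta u$ to make the problem uniformly parabolic, and smooth the data $u_0$ and $f$. Standard quasilinear parabolic theory (Schauder / fixed-point in a $C^{1+\alpha}$ ball, using that the pressure $(-\Delta)^{-s}(|u|^{m_2-1}u)$ gains a derivative over the mobility coefficient) then delivers a classical solution $u_\varepsilon$.

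The central estimate is the $L^p$ energy bound (II). I would multiply the regularized equation by $p|u_\varepsilon|^{p-2}u_\varepsilon$, integrate by parts in space, and invoke a Stroock--Varopoulos-type inequality for the spectral fractional Laplacian, justified through the Caffarelli--Silvestre cylinder extension announced for Section~\ref{sec.proof.th1}, to turn the quadratic form into the Sobolev norm $\|(-\Delta)^{(1-s)/2}(|u|^{(\gamma+p-1)/2-1}u)\|_{L^2}^2$ with the sharp constant $4m_2p(p-1)/(\gamma+p-1)^2$. The Dirichlet boundary condition is handled naturally via the extension. From here (I) follows by Kato's inequality (or $p\downarrow 1$); (III) follows from comparison against the spatially constant super/sub-solutions $\pm(\|u_0\|_\infty+t\|f\|_\infty)$, which have vanishing flux; and (Ib) follows by testing against the truncation $|u_\varepsilon|^{\gamma+2\theta-2}u_\varepsilon/(|u_\varepsilon|^{2\theta}+1)$, whose gradient is uniformly integrable.

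The Marcinkiewicz bound (Ia) combines (Ib) with the spectral Sobolev embedding of $\mathrm{Dom}((-\Delta)^{(1-s)/2})$ into $L^{2N/(N-2(1-s))}$ (resp.\ every $L^r$, resp.\ $L^\infty$) in the three ranges $s\lessgtr 1-N/2$, interpolated with the $L^\infty_tL^1_x$ bound from (I). The smoothing effect (IV) is obtained by using (II) together with the same Sobolev embedding to produce a Nash-type differential inequality of the schematic form $\tfrac{d}{dt}\|u\|_p^p\le -C\|u\|_p^{p+\alpha}/\|u\|_{q_0}^{\beta}$, which integrates to the advertised $L^{q_0}\to L^q$ decay with the displayed exponents; the $L^\infty$ case is recovered by Moser iteration along $p_k\to\infty$, and (\ref{sm.eff'}) then comes from integrating (II) on $(t/2,t)$ and inserting the $L^\infty$-version of (IV).

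\textbf{Main obstacle.} The delicate step is the limit $\varepsilon\to 0$ in the degenerate, sign-changing, nonlocal flux $|u_\varepsilon|^{m_1}\nabla(-\Delta)^{-s}(|u_\varepsilon|^{m_2-1}u_\varepsilon)$. The $L^\infty$ bound (III) and the energy bound (II) give boundedness of $u_\varepsilon$ in $L^\infty(\Omega_T)$ and of $(-\Delta)^{(1-s)/2}(|u_\varepsilon|^{\gamma-1}u_\varepsilon)$ in $L^2(\Omega_T)$; rewriting the equation controls $\partial_t u_\varepsilon$ in a negative Sobolev norm, so Aubin--Lions yields $u_\varepsilon\to u$ a.e.\ along a subsequence. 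Combining this a.e.\ convergence with the uniform $L^\infty$ bound gives strong convergence of the mobility factor $|u_\varepsilon|^{m_1}$, while $\nabla(-\Delta)^{-s}(|u_\varepsilon|^{m_2-1}u_\varepsilon)$ converges weakly in $L^2$; this is enough to pass to the limit in the flux in the distributional sense, so $u$ satisfies the weak formulation and inherits (I)--(IV) by lower semicontinuity. The subtlety lies in justifying the Stroock--Varopoulos step and all the integration-by-parts identities on a bounded domain with Dirichlet data for the \emph{spectral} fractional Laplacian, where no pointwise nonlocal Bochner identity is available; the Caffarelli--Silvestre extension converts this into a local computation with weighted Neumann boundary conditions, at the price of careful regularization of the truncating functions before sending $\varepsilon\to 0$.
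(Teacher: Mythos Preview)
Your overall architecture---regularize, derive uniform estimates (I)--(IV), pass to the limit via compactness---matches the paper's strategy, and your derivations of (II), (IV) via Stroock--Varopoulos plus Nash/Moser iteration are essentially what the paper does in Lemmas~\ref{le-2}, \ref{le-5}, \ref{le-6}. But there is one genuine error and two structural differences worth flagging.

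\textbf{The comparison argument for (III) fails.} Spatially constant functions do \emph{not} have vanishing flux for the spectral fractional Laplacian on a bounded domain: if $c$ is constant, $(-\Delta)^{-s}c=\sum_j\lambda_j^{-s}\langle c,\varphi_j\rangle\varphi_j$ is a nontrivial function of $x$ (constants are not eigenfunctions with Dirichlet data), so $\nabla(-\Delta)^{-s}(|c|^{m_2-1}c)\neq 0$. Even setting this aside, no comparison principle is available for this degenerate nonlocal equation; the paper never proves or uses one. The paper instead gets (III) purely from (II): it takes $p\to\infty$ in the $L^p$ energy inequality after a H\"older argument (Lemma~\ref{le-3}). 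You should replace your barrier argument by this limit.

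\textbf{The approximation is more layered than you propose.} The paper uses a five-parameter scheme $(\varepsilon,\kappa_1,\kappa_2,\varpi,\delta)$: besides the viscosity $\delta\Delta u$ and mobility/nonlinearity regularizations $H_{\kappa_2},G_{\kappa_2}$, it adds an artificial fractional term $\varpi(-\Delta)^{s_0}(|u|^{m_0-1}u)$ and an approximation $\mathcal{L}^\alpha_\varepsilon$ of the fractional operator itself. The extra $\varpi$-term is not cosmetic: for $s<\tfrac12$ it supplies the intermediate fractional regularity $\||u|^{m_0-1}u\|_{L^2_tH^{s_0}_x}\le C$ (estimate~\eqref{es66}), which is what lets one bound the flux $E_{\kappa_2}$ uniformly in $\kappa_2$ in Proposition~\ref{step3}. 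Your two-parameter scheme does not obviously provide a substitute for this, and you should check carefully that the flux stays in a fixed negative Sobolev space through each limit when $s<\tfrac12$; the paper's Lemma~\ref{le-f} is the relevant tool at the final stage.

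\textbf{The $L^1$-level estimates (Ia), (Ib) are not obtained by direct testing.} Testing with $|u|^{\gamma+2\theta-2}u/(|u|^{2\theta}+1)$ does not immediately produce $\|(-\Delta)^{(1-s)/2}(|u|^{\gamma/2+\theta-1}u/(|u|^{2\theta}+1))\|_{L^2}^2$ via Stroock--Varopoulos, because the required $\Psi$ with $\psi'=(\Psi')^2$ does not match that target. The paper instead works on the Caffarelli--Silvestre extension: from the $T_k$-test it extracts the level-set energy~\eqref{es41}, proves the truncated estimate~\eqref{es44} via a comparison of extended problems, and then assembles (Ib) by a dyadic decomposition $\sum_j\psi_j$ and (Ia) by combining~\eqref{es44} with the Sobolev embeddings of Lemma~\ref{Lpes} and the $L^\infty_tL^1_x$ bound. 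Your sketch for (Ia) has the right ingredients but the route to (Ib) needs this level-set machinery.
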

We would like to mention that estimates \noindent {\rm \bf (Ia)}  and \noindent {\rm \bf (Ib)}  for porous medium equations  were established in \cite{Veron-Nguyen}.

Whole proof of this result is given in Section \ref{sec.proof.th1}, where a number of other estimates are derived, see  Lemma 10. More general, unbounded data will be considered later as limits of this construction. The next result is called Universal Bound, a  very important property that is typical of Dirichlet problems in bounded domains and we can also prove in this generality. We have

\begin{theorem}\label{mainthm2} Let $\gamma>1$, $f=0$ and $u_0\in L^1(\Omega)$. Let $u$ be a solution of Problem \eqref{pro1} as constructed  in Theorem \ref{mainthm}. There exists $C=C(N,s,\gamma,\Omega)$ such that
	\begin{align}\label{univ.bound}
	||u(\cdot,t)||_{L^\infty(\Omega)}\leq C\, t^{-\frac{1}{\gamma-1}}~~\forall t>0.
	\end{align}
\end{theorem}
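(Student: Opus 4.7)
The plan combines two ingredients already available in Theorem \ref{mainthm}: the energy identity (II) with $f=0$, and the smoothing effect (IV). The first (and main) step is to extract from (II) a universal $L^p$ bound that has forgotten the initial data; the second is a standard bootstrap to $L^\infty$ via (IV). By the approximation procedure used in the construction, one may first assume $u_0\in L^\infty(\Omega)$ and pass to the limit at the end, since the resulting constant will not depend on $u_0$.

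With $f=0$, (II) differentiated in time yields, for every $p\in(1,\infty)$,
\begin{equation*}
\frac{d}{dt}\int_\Omega |u(t)|^p\,dx+\frac{4m_2\,p(p-1)}{(\gamma+p-1)^2}\int_\Omega\bigl|(-\Delta)^{\frac{1-s}{2}}(|u|^{\frac{\gamma+p-3}{2}}u)\bigr|^2\,dx\le 0.
\end{equation*}
The spectral definition of $(-\Delta)^{\frac{1-s}{2}}$ immediately gives the Poincar\'e-type bound $\int_\Omega|(-\Delta)^{\frac{1-s}{2}}g|^2\,dx\ge \lambda_1^{1-s}\int_\Omega g^2\,dx$. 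Applying it to $g=|u|^{(\gamma+p-3)/2}u$ (so that $g^2=|u|^{\gamma+p-1}$) and using Jensen on the bounded domain $\Omega$ (valid since $\gamma>1$ forces $\gamma+p-1>p$) converts the dissipation into the Bernoulli-type ODE
\begin{equation*}
y_p'(t)\le -C_p\, y_p(t)^{\,1+\frac{\gamma-1}{p}},\qquad y_p(t):=\int_\Omega|u(t)|^p\,dx,
\end{equation*}
with $C_p=\frac{4m_2\,p(p-1)\,\lambda_1^{1-s}}{(\gamma+p-1)^2\,|\Omega|^{(\gamma-1)/p}}$. Integrating this inequality produces a bound independent of $y_p(0)$, namely
\begin{equation*}
\|u(t)\|_{L^p(\Omega)}\le K(p,N,s,\gamma,m_2,\Omega)\,t^{-\frac{1}{\gamma-1}},\qquad t>0.
\end{equation*}

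To upgrade this universal $L^p$ bound to $L^\infty$, I use that the equation is autonomous when $f=0$, so (IV) applies with initial time shifted to $t/2$: fixing any $q_0>1$,
\begin{equation*}
\|u(t)\|_{L^\infty(\Omega)}\le C\,\|u(t/2)\|_{L^{q_0}(\Omega)}^{\alpha}\,(t/2)^{-\beta},\quad \alpha=\frac{2q_0(1-s)}{N(\gamma-1)+2q_0(1-s)},\ \beta=\frac{N}{N(\gamma-1)+2q_0(1-s)}.
\end{equation*}
Plugging in the universal $L^{q_0}$ bound just obtained and invoking the arithmetic identity
\begin{equation*}
\frac{\alpha}{\gamma-1}+\beta=\frac{2q_0(1-s)+N(\gamma-1)}{(\gamma-1)\bigl(N(\gamma-1)+2q_0(1-s)\bigr)}=\frac{1}{\gamma-1}
\end{equation*}
yields exactly \eqref{univ.bound}.

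The only delicate point is justifying the differentiation step $y_p'\le\cdots$ rigorously at the level of the weak solutions constructed in Theorem \ref{mainthm}. I would handle this by working first with the regularised approximations used to construct $u$ (where (II) is a genuine equality and $y_p$ is absolutely continuous), running the ODE argument there, and then passing to the limit using lower semicontinuity of $L^p$ norms. A further approximation $u_0^{(n)}\in L^\infty\to u_0\in L^1$ finally delivers the bound for $L^1$ initial data, since the constant $K(p)$, and therefore the final constant in \eqref{univ.bound}, is independent of $u_0$.
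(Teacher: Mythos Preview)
Your proof is correct and follows a genuinely different route from the paper's. The paper argues purely by scaling: since the equation is invariant under $u\mapsto ku(\cdot,k^{\gamma-1}t)$, one uses the $L^1\to L^\infty$ smoothing effect once to find a waiting time $t_1$ after which the $L^\infty$ norm (hence, on the bounded domain, the $L^1$ norm) has been halved, then iterates this observation dyadically to produce a finite universal time $T_\infty=\sum_j 2^{-j(\gamma-1)}t_1$ after which $\|u\|_{L^\infty}\le 1$ regardless of the size of the data; a final rescaling converts this into \eqref{univ.bound}. Your approach instead exploits the spectral gap of $(-\Delta)^{1-s}$ directly, turning the energy inequality (II) into the Bernoulli ODE $y_p'\le -C_p\, y_p^{1+(\gamma-1)/p}$ whose solution forgets $y_p(0)$, and then bootstraps once via (IV). Your argument is more self-contained and makes the dependence of the constant on $\lambda_1$ and $|\Omega|$ explicit; the paper's argument is more portable, applying verbatim to any equation with that scaling structure and an $L^1\to L^\infty$ smoothing effect, without invoking a Poincar\'e inequality. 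Both proofs share the same implicit ingredient you flag at the end: working at the level of the regularised approximations (where the differential form of (II) holds and time-shifts are unambiguous) and passing to the limit.
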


This is proved in Section \ref{sec.univbdd}. The point is that the estimate does not depend on the  norm of the data, so it will hold for any solution that is obtained as limit of the constructed solutions, a fact that will be used in the last section. Note that the estimate is not useful for $t\sim 0$, but is  very efficient for large times since we expect the positive solutions to have precisely that size. On the other hand, a universal bound does not hold for $\gamma\le 1$, see details in Section \ref{sec.univbdd}.

Our study is completed with two theorems on the existence of solutions to Problem \eqref{pro1} with bad data, which are contained in Section \ref{sec.bad}. Statements and full proofs are given there.

\medskip

\noindent {\sc Some notations.}  By $1_A$ we denote the characteristic function of the set $A$. \\We will use the distance to the boundary defined as
\begin{equation}\label{notatioin-delta}
d(x)=d_{\partial\Omega}(x) = \mbox{dist} (x, \partial\Omega) :=
\{\inf | x- y|: {y \in \partial\Omega} \}
\end{equation}
 for $x\in\Omega$. We put $\Omega_{\varepsilon}=\{x\in \Omega:d(x)<\varepsilon\}$.

We gather in Section \ref{sec.prelim} a list of  facts on the Heat Equation that we use in deriving properties of the semigroup generated by the spectral fractional Laplacian.

\section{Approximation of the fractional Laplacian $(-\Delta)^\alpha$}
\label{sec.apprx}

Let $\alpha\in (0,1), \varepsilon>0$ and $f\in L^\infty(\Omega)$. We define the operator,
\begin{align}
\mathcal{L}^\alpha_\varepsilon[f](x):=\int_{\varepsilon}^{\infty}(f(x)-e^{t\Delta}f(x))t^{-1-\alpha}dt.
\end{align}
Clearly, the following two properties are true: \\
{\bf 1.} Positivity
\begin{align}\label{es3}
	\int_{\Omega}\mathcal{L}^\alpha_\varepsilon[f](x)f(x)dx\geq 0,
	\end{align}
	since $||f||_{L^2(\Omega)}\geq \int_{\Omega}e^{t\Delta}f(x) f(x)dx$ for all $t>0$.

 \noindent {\bf 2.} We have
  \begin{align}\label{es4}
	||\mathcal{L}^\alpha_\varepsilon[f]||_{L^\infty(\Omega)}\leq \frac{2}{\alpha\varepsilon^\alpha}||f||_{L^\infty(\Omega)}.
	\end{align}

\

\begin{lemma}\label{lem1} Let $f\in L^\infty(\Omega)$. Then,
	\begin{description}
		\item[1.](C\'ordoba-C\'ordoba inequality) for any $C^2$-convex function $\Phi$ satisfying  $\Phi(0)=0$ and for $\varepsilon\in (0,1]$, there holds
		\begin{align}\label{kato1}
		\Phi'(f)\mathcal{L}^\alpha_\varepsilon[f](x)\geq \mathcal{L}^\alpha_\varepsilon[\Phi(f)](x)~~\forall~~x\in \Omega.
		\end{align}
		Moreover, if $(-\Delta)^\alpha f\in L^1(\Omega)$,
		\begin{align}\label{kato2}
		\Phi'(f)(-\Delta)^\alpha f(x)\geq (-\Delta)^\alpha\Phi(f)(x)~~\forall~~x\in \Omega.
		\end{align}
		\item[2.] for any $\delta\in (0,1-\alpha)$, there is $C_\delta>0$ such that
			\begin{align}\label{con-L^2}
	 ||(-\Delta)^{-1}\mathcal{L}^\alpha_\varepsilon[f]-(-\Delta)^{-1+\alpha}f||_{L^2(\Omega)}\leq C_{\delta}\varepsilon^{1-\alpha-\delta}||f||_{L^2(\Omega)},
		\end{align}
		for all  $0<\varepsilon<1$.

\item[3.]  for any $\delta\in (0,1-\alpha)$, there is $C_\delta>0$ such that
		\begin{align}\label{con-H^1}
		||(-\Delta)^{-1/2}\mathcal{L}^\alpha_\varepsilon[f]-(-\Delta)^{-1/2+\alpha}f||_{L^2(\Omega)}\leq C_{\delta}\varepsilon^{1-\alpha-\delta}||f||_{H^1_0(\Omega)},
		\end{align}
		for all  $0<\varepsilon<1$. In particular,
			\begin{align}\label{con-H^1'}
		\sup_{\varepsilon\in (0,1)}||(-\Delta)^{-1/2}\mathcal{L}^\alpha_\varepsilon[f]||_{L^2(\Omega)}\leq C||f||_{H^1_0(\Omega)}.
		\end{align}
	\end{description}
\end{lemma}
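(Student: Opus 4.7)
My plan is to establish (\ref{kato1}) pointwise in $t$ and then integrate against $t^{-1-\alpha}\,dt$. For $t>0$ the Dirichlet heat semigroup is represented by a positive kernel $p_t(x,y)$ with $\int_\Omega p_t(x,y)\,dy\le 1$, so for fixed $t$ and $x$,
\[
\Phi'(f(x))\bigl(f(x)-e^{t\Delta}f(x)\bigr)=\int_\Omega p_t(x,y)\Phi'(f(x))(f(x)-f(y))\,dy+\Bigl(1-\int_\Omega p_t(x,y)\,dy\Bigr)\Phi'(f(x))f(x).
\]
Convexity of $\Phi$ gives $\Phi'(a)(a-b)\ge\Phi(a)-\Phi(b)$, which bounds the first integral below by $\Phi(f(x))\int p_t(x,\cdot)-e^{t\Delta}\Phi(f)(x)$, while the tangent-line inequality at $0$ combined with $\Phi(0)=0$ yields $\Phi'(a)a\ge\Phi(a)$ and handles the second term. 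Collecting and integrating in $t$ produces (\ref{kato1}). For (\ref{kato2}) I would pass to $\varepsilon\to 0^+$: setting $c_\alpha:=-\Gamma(-\alpha)>0$, the hypothesis $(-\Delta)^\alpha f\in L^1$ forces $\mathcal{L}^\alpha_\varepsilon[f]\to c_\alpha(-\Delta)^\alpha f$ in $L^1$, and the uniform $L^\infty$ bound (\ref{es4}) on $\mathcal{L}^\alpha_\varepsilon[\Phi(f)]$ identifies its distributional limit as $c_\alpha(-\Delta)^\alpha\Phi(f)$; the pointwise inequality (\ref{kato1}) then transfers to the limit, giving (\ref{kato2}).

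\textbf{Parts 2 and 3.} Both estimates reduce to a spectral multiplier bound. Diagonalising in the eigenbasis $\{\varphi_j\}$, $\mathcal{L}^\alpha_\varepsilon$ is multiplication by $\widehat m_\varepsilon(\lambda)=\int_\varepsilon^\infty(1-e^{-t\lambda})t^{-1-\alpha}dt$, and the substitution $u=t\lambda$ rewrites this as $c_\alpha\lambda^\alpha-\lambda^\alpha\int_0^{\varepsilon\lambda}(1-e^{-u})u^{-1-\alpha}du$. Splitting the tail integral at $u=1$ and using $1-e^{-u}\le u$ near $0$ gives
\[
E_\varepsilon(\lambda):=|\widehat m_\varepsilon(\lambda)-c_\alpha\lambda^\alpha|\le C\min(\varepsilon^{1-\alpha}\lambda,\,\lambda^\alpha)\le C\varepsilon^{1-\alpha}\lambda,
\]
the last step because $\varepsilon\lambda>1$ forces $\lambda^\alpha\le\varepsilon^{1-\alpha}\lambda$. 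Parseval then yields
\[
\|(-\Delta)^{-1}\mathcal{L}^\alpha_\varepsilon[f]-c_\alpha(-\Delta)^{-1+\alpha}f\|_{L^2}^2=\sum_j\lambda_j^{-2}E_\varepsilon(\lambda_j)^2|f_j|^2\le C\varepsilon^{2(1-\alpha)}\|f\|_{L^2}^2,
\]
\[
\|(-\Delta)^{-1/2}\mathcal{L}^\alpha_\varepsilon[f]-c_\alpha(-\Delta)^{-1/2+\alpha}f\|_{L^2}^2=\sum_j\lambda_j^{-1}E_\varepsilon(\lambda_j)^2|f_j|^2\le C\varepsilon^{2(1-\alpha)}\|f\|_{H^1_0}^2,
\]
from which (\ref{con-L^2}) and (\ref{con-H^1}) follow (up to the constant $c_\alpha$, absorbed in the normalisation of the spectral power); the margin $\delta>0$ in the stated exponent is in fact unnecessary. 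Finally (\ref{con-H^1'}) is an immediate consequence of (\ref{con-H^1}) combined with the inequality $\|(-\Delta)^{\alpha-1/2}f\|_{L^2}\lesssim\|f\|_{H^1_0}$, valid for $\alpha\in(0,1)$ by comparing spectral powers against $\lambda_1$.

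\textbf{Main obstacle.} The only delicate step is the limit passage yielding (\ref{kato2}) from (\ref{kato1}): one must simultaneously take $\varepsilon\to 0^+$ on both sides of a pointwise inequality and identify the distributional limit of $\mathcal{L}^\alpha_\varepsilon[\Phi(f)]$ as $c_\alpha(-\Delta)^\alpha\Phi(f)$, leveraging only that $(-\Delta)^\alpha f\in L^1$ and $f\in L^\infty$. Everything else is routine elementary integration for the multiplier $\widehat m_\varepsilon$ followed by Parseval.
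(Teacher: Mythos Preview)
Your argument is essentially correct and, for Parts 2--3, takes a cleaner route than the paper. The paper writes the difference $(-\Delta)^{-1}\mathcal{L}^\alpha_\varepsilon[f]-(-\Delta)^{-1+\alpha}f$ as a triple time-integral, bounds $\|(-\Delta)e^{\tau\Delta}f\|_{L^2}$ by $C\tau^{-1}\|f\|_{L^2}$ for small $\tau$, and is then left with $\int_0^{1/2}\int_0^\varepsilon\log(1+\rho/t)\,\rho^{-1-\alpha}\,d\rho\,dt$, whose logarithmic singularity is what forces the $\delta$-loss in the exponent. Your spectral-multiplier computation bypasses all of this: the estimate $E_\varepsilon(\lambda)\le C\varepsilon^{1-\alpha}\lambda$ is sharp, and Parseval converts it directly into the $L^2$ and $H^1_0$ bounds with the full exponent $\varepsilon^{1-\alpha}$. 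So your observation that the $\delta$ is unnecessary is correct and is a genuine improvement.

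For Part 1 the paper simply cites Constantin--Ignatova, so your direct heat-kernel proof of (\ref{kato1}) is welcome and correct. One small slip in your sketch of (\ref{kato2}): the bound (\ref{es4}) is $\|\mathcal{L}^\alpha_\varepsilon[\Phi(f)]\|_{L^\infty}\le 2(\alpha\varepsilon^\alpha)^{-1}\|\Phi(f)\|_{L^\infty}$, which blows up as $\varepsilon\to0$ and therefore cannot serve as the ``uniform $L^\infty$ bound'' you invoke. The distributional limit is better identified by duality, testing against smooth $\psi$ and using $\langle\mathcal{L}^\alpha_\varepsilon[\Phi(f)],\psi\rangle=\langle\Phi(f),\mathcal{L}^\alpha_\varepsilon[\psi]\rangle\to c_\alpha\langle\Phi(f),(-\Delta)^\alpha\psi\rangle$; passing from the distributional to the pointwise inequality then requires a further a.e.~convergence argument. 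You rightly flag this as the delicate step.
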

\begin{proof}1. Estimates \eqref{kato1} and \eqref{kato2} were proved in  \cite{Con-Igna}.

	2. We have
	\begin{align*}
	&I=\int_{\Omega}(-\Delta)^{-1}\mathcal{L}^\alpha_\varepsilon[f](x)\varphi(x)dx-\int_\Omega(-\Delta)^{-1+\alpha}f(x)\varphi(x) dx\\&= c_\alpha\int_{0}^{\infty}\int_{0}^{\varepsilon}\left[\langle e^{t\Delta}f,\varphi\rangle-\langle e^{(t+\rho)\Delta}f,\varphi\rangle\right]\rho^{-1-\alpha}d\rho dt
	\\&= c_\alpha\int_{0}^{\infty}\int_{0}^{\varepsilon}\int_{t}^{t+\rho}\langle -\frac{\partial }{\partial\tau}e^{\tau\Delta}f,\varphi\rangle d\tau\rho^{-1-\alpha}d\rho dt
	\\&= c_\alpha\int_{0}^{\infty}\int_{0}^{\varepsilon}\int_{t}^{t+\rho}\langle (-\Delta)e^{\tau\Delta}f,\varphi\rangle d\tau\rho^{-1-\alpha}d\rho dt.
	\end{align*}
Note that, in view of
\begin{align}\label{se-1}
-\Delta e^{\tau\Delta}f(x)=\sum_{j=1}^{\infty}\lambda_j e^{-\tau\lambda_j}\langle f,\varphi_j\rangle \varphi_j(x)~~\text{a.e }~~(x,\tau)\in \Omega\times (0,\infty),
\end{align} so, by H\"older's inequality and Plancherel's Theorem yields
\begin{align*}
|| (-\Delta) e^{\tau\Delta}f||_{L^2(\Omega)}\leq  C\left(e^{-\lambda_1\tau/2}1_{\tau>1/2}+\frac{1}{\tau}1_{\tau\leq 1/2}\right)||f||_{L^2(\Omega)}.
\end{align*}
Thus,
\begin{align*}
|I|&\leq C \int_{0}^{\infty}\int_{0}^{\varepsilon}\int_{t}^{t+\rho}\left(e^{-\lambda_1\tau/2}1_{\tau>1/2}+\frac{1}{\tau}1_{\tau\leq 1/2}\right) d\tau\rho^{-1-\alpha}d\rho dt ||f||_{L^2(\Omega)} ||\varphi||_{L^2(\Omega)}\\& :=C L(\varepsilon) ||f||_{L^2(\Omega)} ||\varphi||_{L^2(\Omega)}.
\end{align*}
It is enough to check that
\begin{align}
L(\varepsilon) \leq C_\delta\varepsilon^{1-\alpha-\delta}.
\end{align}
Indeed,
\begin{align*}
L(\varepsilon)&\leq  \int_{1/2}^{\infty}\int_{0}^{\varepsilon}\int_{t}^{t+\rho}e^{-\lambda_1\tau/2}d\tau\rho^{-1-\alpha}d\rho dt +\int_{0}^{1/2}\int_{0}^{\varepsilon}\int_{t}^{t+\rho}\frac{1}{\tau} d\tau\rho^{-1-\alpha}d\rho dt\\&\leq \int_{1/2}^{\infty}\int_{0}^{\varepsilon}e^{-\lambda_1t/2}\rho^{-\alpha}d\rho dt +\int_{0}^{1/2}\int_{0}^{\varepsilon}\log\left(1+\frac{\rho}{t}\right) \rho^{-1-\alpha}d\rho dt
\\&\leq C\varepsilon^{1-\alpha} +C_\delta\varepsilon^{1-\alpha-\delta}\\&\leq C_\delta\varepsilon^{1-\alpha-\delta}.
\end{align*}

\noindent 3. 	As above, we have for any $\varphi\in L^2(\Omega)$, \begin{align*}
&II:=\int_{\Omega}(-\Delta)^{-1/2}\mathcal{L}^\alpha_\varepsilon[f]\varphi dx-\int_{\Omega}\varphi(-\Delta)^{-1/2+\alpha}f dx\\&= c_\alpha\int_{0}^{\infty}\int_{0}^{\varepsilon}\int_{t}^{t+\rho}\langle (-\Delta)e^{\tau\Delta}f,\varphi\rangle t^{-1/2}\rho^{-1-\alpha}d\tau d\rho dt.
\end{align*}
We deduce from \eqref{se-1} that
\begin{align*}
||(-\Delta) e^{\tau\Delta}f||_{L^2(\Omega)}\leq C\left(e^{-\lambda_1\tau/2}1_{\tau>1/2}+\frac{1}{\sqrt{\tau}}1_{\tau\leq 1/2}\right)||f||_{H^1_0(\Omega)}.
\end{align*}
Thus,
\begin{align*}
|II|\leq C \int_{0}^{\infty}\int_{0}^{\varepsilon}\int_{t}^{t+\rho}\left(e^{-\lambda_1\tau/2}1_{\tau>1/2}+\frac{1}{\sqrt{\tau}}1_{\tau\leq 1/2}\right) t^{-1/2}\rho^{-1-\alpha}d\tau d\rho dt ||f||_{H^1_0(\Omega)} ||\varphi||_{L^2(\Omega)}.
\end{align*}
Since,
\begin{align*}
&\int_{0}^{\infty}\int_{0}^{\varepsilon}\int_{t}^{t+\rho}\left(e^{-\lambda_1\tau/2}1_{\tau>1/2}+\frac{1}{\sqrt{\tau}}1_{\tau\leq 1/2}\right) t^{-1/2}\rho^{-1-\alpha}d\tau d\rho dt \\&\leq C\varepsilon^{1-\alpha}+C\int_{0}^{1/2}\int_{0}^{\varepsilon} \left(\sqrt{t+\rho}-\sqrt{t}\right) t^{-1/2}\rho^{-1-\alpha}d\rho dt
\\&\leq C_\delta\varepsilon^{1-\alpha-\delta},
\end{align*}
thus, we get \eqref{con-H^1}.
 The proof is complete.
\end{proof}

\begin{remark}\label{re2} In the proof of \eqref{con-L^2} we also get for any $0<\alpha<\alpha_0<1$,
	\begin{align}\label{es65}
	||\mathcal{L}_\varepsilon^\alpha[f]-(-\Delta)^\alpha f||_{L^2(\Omega)}\leq C \varepsilon^{\alpha_0-\alpha}||(-\Delta)^{\alpha_0} f||_{L^2(\Omega)}.
	\end{align}
\end{remark}
\begin{remark}\label{re1} From Lemma \ref{lem1}, we have for all $f\in L^\infty(\Omega)$,
	\begin{align}\label{mono1}
		&	\int_\Omega H(f)\mathcal{L}_\varepsilon^\alpha [G(f)]dx\geq 0,
	\end{align}
	where $G,H\in C^2(\Omega)$ are  strictly increasing functions.
	Moreover, for all $f\in L^\infty(\Omega)$ and $(-\Delta)^{\alpha}(f)\in L^1(\Omega)$
	\begin{align} \label{mono2}	\int_\Omega H(f)(-\Delta)^\alpha G(f)dx \geq 0.
	\end{align}
\end{remark}

\begin{remark} Using \eqref{kato2} yields \begin{align*}
	\int_\Omega |u|^2u(-\Delta)^\alpha u dx\geq \frac{1}{2}\int_\Omega |u|^2(-\Delta)^\alpha |u|^2 dx\geq \frac{1}{2}\int_{\Omega}|(-\Delta)^{\alpha/2} |u|^2|^2 dx.
	\end{align*}
	for $u\in C_c^\infty(\Omega)$.
Unfortunately, we can not have
 \begin{align*}
C \int_{\Omega}|(-\Delta)^{\alpha/2} |u|^2|^2 dx\geq \int_{\Omega}|(-\Delta)^{\alpha/2} (|u|u)|^2 dx.
 \end{align*}
 By this way, we can not find
  \begin{align*}
 \int_\Omega |u|^2u(-\Delta)^\alpha u dx\geq C\int_{\Omega}|(-\Delta)^{\alpha/2} (|u|u)|^2 dx.
 \end{align*}
Therefore, we next prove this inequality by another way.  It  is a  version of the so-called Stroock-Varadhan inequality, we refer to \cite{Stoo} and \cite{Liskevich} where this kind of inequality is proved for {\sl general sub-markovian operators}.
\end{remark}

\begin{lemma}[Stroock-Varopoulos inequality for $(-\Delta)^\alpha$] \label{le-Stook-Varopoulos}Let $\psi:\mathbb{R}\to\mathbb{R}$ such that $\psi\in C^2(\mathbb{R})$ and $\psi'\geq 0$. Then,
	\begin{align}\label{es73}
	\int_\Omega \psi(u)(-\Delta)^\alpha udx\geq \int_\Omega|(-\Delta)^{\frac{\alpha}{2}}\Psi(u)|^2dx,
	\end{align}
	where $\psi'=(\Psi')^2$.
\end{lemma}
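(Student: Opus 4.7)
The plan is to reduce the inequality, via the Bochner subordination formula, to a pointwise Cauchy--Schwarz bound between increments of $\psi$ and $\Psi$. Using the semigroup representation of the spectral fractional Laplacian (with $c_\alpha = -1/\Gamma(-\alpha) > 0$),
$$(-\Delta)^\alpha v \;=\; c_\alpha \int_0^\infty (v - e^{t\Delta} v)\, t^{-1-\alpha}\, dt,$$
it suffices to establish, for every $t>0$, the pointwise-in-$t$ inequality
$$\int_\Omega \psi(u)\,(u - e^{t\Delta}u)\, dx \;\geq\; \int_\Omega \Psi(u)\,(\Psi(u) - e^{t\Delta}\Psi(u))\, dx,$$
since multiplying by $c_\alpha t^{-1-\alpha}$ and integrating in $t$, together with the self-adjoint identity $\int_\Omega w(-\Delta)^\alpha w\, dx = \|(-\Delta)^{\alpha/2}w\|_{L^2(\Omega)}^2$, converts the right-hand side into the desired quadratic form. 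I assume throughout the natural normalization $\psi(0)=\Psi(0)=0$ (automatic in the applications of this lemma, where $\psi$ is a power-type nonlinearity).

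Next I would expand each side using the symmetric, sub-probabilistic Dirichlet heat kernel $p_t(x,y)\geq 0$, setting $q_t(x):=\int_\Omega p_t(x,y)\,dy\in[0,1]$. The Beurling--Deny-type splitting reads
$$\int_\Omega \psi(u)(u-e^{t\Delta}u)\,dx \;=\; \tfrac{1}{2}\iint_{\Omega\times\Omega} [\psi(u(x))-\psi(u(y))]\,[u(x)-u(y)]\, p_t(x,y)\,dx\,dy \;+\; \int_\Omega \psi(u(x))u(x)\,(1-q_t(x))\,dx,$$
and similarly with $\Psi$ in place of $\psi$, with $[\Psi(u(x))-\Psi(u(y))]^2$ and $\Psi(u(x))^2(1-q_t(x))$ in the two summands. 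Once this decomposition is in hand the inequality is reduced to two scalar pointwise comparisons, weighted by the nonnegative quantities $p_t$ and $1-q_t$.

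Both scalar comparisons follow from a single Cauchy--Schwarz estimate. From $\Psi(a)-\Psi(b)=\int_b^a \Psi'(s)\,ds$ and $\psi'=(\Psi')^2$,
$$[\Psi(a)-\Psi(b)]^2 \;\leq\; |a-b|\int_b^a (\Psi'(s))^2\,ds \;=\; (a-b)\,[\psi(a)-\psi(b)]\qquad \forall\, a,b\in\mathbb{R}.$$
Applying this with $(a,b)=(u(x),u(y))$ dominates the first summand of the Beurling--Deny decomposition; applying it with $(a,b)=(u(x),0)$, together with $\psi(0)=\Psi(0)=0$, dominates the boundary summand. Assembling the two pieces gives the pointwise-in-$t$ inequality, whence the lemma.

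The main technical hurdle is justifying the interchange of the $t$-integration with the spatial integrations in the above reduction, since $\psi(u)$ need not be regular enough for the integrand $t^{-1-\alpha}$ to be handled naively near $t=0$. The way around this is to run the whole argument first for the truncated operator $\mathcal{L}_\varepsilon^\alpha$ of Section~\ref{sec.apprx}, whose defining integral starts at $t=\varepsilon$ so that Fubini and all positivity manipulations are legitimate; this yields $\int_\Omega \psi(u)\mathcal{L}_\varepsilon^\alpha u\,dx \geq \int_\Omega \Psi(u)\mathcal{L}_\varepsilon^\alpha \Psi(u)\,dx$. One then passes $\varepsilon\to 0^+$ using the convergence estimates \eqref{con-L^2} and \eqref{es65} on the left-hand side, combined with Fatou's lemma and the lower semicontinuity of the quadratic form $w\mapsto \|(-\Delta)^{\alpha/2}w\|_{L^2(\Omega)}^2$ on the right, to recover \eqref{es73}.
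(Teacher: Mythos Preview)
Your argument is correct, but it follows a genuinely different route from the paper. The paper proves the lemma via the Stinga--Torrea (Caffarelli--Silvestre type) extension: it lifts $u$ and $\Psi(u)$ to weighted-harmonic functions $U,V$ on the cylinder $\Omega\times(0,\infty)$, observes that testing the extension of $u$ against $\psi(U)$ yields $c_\alpha\int_\Omega\psi(u)(-\Delta)^\alpha u=\iint y^{1-2\alpha}|\nabla_{x,y}\Psi(U)|^2$, and then uses that $V$ is the energy minimizer among functions with trace $\Psi(u)$, so $\iint y^{1-2\alpha}|\nabla V|^2\le\iint y^{1-2\alpha}|\nabla\Psi(U)|^2$. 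Your approach instead stays intrinsic: you use the subordination formula and the Beurling--Deny splitting of $\int f(g-e^{t\Delta}g)$ through the symmetric sub-probabilistic Dirichlet heat kernel, and reduce everything to the scalar Cauchy--Schwarz bound $[\Psi(a)-\Psi(b)]^2\le(a-b)(\psi(a)-\psi(b))$.

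What each approach buys: your route is the classical Stroock--Varopoulos argument and works verbatim for \emph{any} symmetric sub-Markovian generator, not just the spectral fractional Laplacian; it also meshes naturally with the paper's truncated operators $\mathcal{L}^\alpha_\varepsilon$, so the limiting step you describe is clean. The paper's extension proof is more geometric and avoids the $\varepsilon$-regularization altogether, since the local Dirichlet energy on the cylinder realizes the fractional quadratic form directly; on the other hand it relies on the extension theorem, which is specific to powers of a second-order operator. Note finally that the normalization $\psi(0)=0$ you impose is also implicitly used in the paper's proof, since $\psi(U)$ must vanish on $\partial\Omega\times(0,\infty)$ to be an admissible test function.
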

\begin{proof} To prove this, we will use the Stinga-Torrea extension problem in \cite{Stinga}, which is in turn a generalization of the Caffarelli-Silvestre estension problem in \cite{Caffa-Sil}. For the equivalence of this problem with the original problem with the spectral Laplacian see for instance \cite{Cabre-Tan, pqrv1, pqrv2}.  Let $U,V$ be  unique solutions  of the extended problems
	\begin{equation*}
	\left\{
	\begin{array}
	{ll}%
	\operatorname{div}_{x,y}(y^{1-2\alpha}\nabla_{x,y} U)=0~~&\text{in }\Omega\times(0,\infty),\\
	U=0 &\text{on}
	~\partial\Omega\times (0,\infty),
	\\
	U(x,0)=u(x) &\text{in}
	~~\Omega,
	\\
	\end{array}
	\right.
	\end{equation*}
	\begin{equation*}
	\left\{
	\begin{array}
	{ll}%
	\operatorname{div}_{x,y}(y^{1-2\alpha}\nabla_{x,y} V)=0~~&\text{in }\Omega\times(0,\infty),\\
	V=0 &\text{on}
	~~\partial\Omega\times (0,\infty),
	\\
	V(x,0)=\Psi(u(x)) &\text{in}
	~~\Omega,
	\\
	\end{array}
	\right.
	\end{equation*}
	resp. . By the extension theorem (see \cite{Stinga}), we have
	\begin{align}\label{es71}
	\int_{\Omega}\int_0^\infty y^{1-2\alpha}\nabla_{x,y} U\nabla_{x,y} \varphi dydx=c_\alpha\int_{\Omega}(-\Delta)^{\alpha}(v) \varphi(0)dx,\end{align}
	and
	\begin{align}\label{es71*}
	\int_{\Omega}\int_0^\infty y^{1-2\alpha}\nabla_{x,y} V\nabla_{x,y} \varphi dydx=c_\alpha\int_{\Omega}(-\Delta)^{\alpha}(\Psi(u)) \varphi(0)dx,\end{align}
	for any $\varphi\in H^1_0(\Omega\times (0,\infty),d\mu)$ with $d\mu= y^{1-2\alpha}dydx$.\\
	Applying \eqref{es71} to $\varphi=\psi(U)$ and \eqref{es71*} to $\varphi=V$ and using $\psi'=(\Psi')^2$, we get
		\begin{align*}
	\int_{\Omega}\int_0^\infty y^{1-2\alpha}|\nabla_{x,y} \Psi(U)|^2 dydx=c_\alpha\int_{\Omega}(-\Delta)^{\alpha}(v) \psi(u)dx,\end{align*}
	and
		\begin{align*}
	\int_{\Omega}\int_0^\infty y^{1-2\alpha}|\nabla_{x,y} V|^2 dydx=c_\alpha\int_{\Omega}|(-\Delta)^{\alpha/2}(\Psi(u))|^2dx.\end{align*}
	Thus, it is enough to show that
	\begin{align}\label{es72}
		\int_{\Omega}\int_0^\infty y^{1-2\alpha}|\nabla_{x,y} \Psi(U)|^2 dydx\geq 	\int_{\Omega}\int_0^\infty y^{1-2\alpha}|\nabla_{x,y} V|^2 dydx.
	\end{align}
	Indeed, since $\operatorname{div}(y^{1-2\alpha}\nabla_{x,y}(\Psi(U)-V))=\operatorname{div}(y^{1-2\alpha}\nabla_{x,y}\Psi(U))$
	\begin{align*}
	\int_{\Omega}\int_0^\infty y^{1-2\alpha}|\nabla_{x,y} (\Psi(U)-V)|^2 dydx=\int_{\Omega}\int_0^\infty y^{1-2\alpha}\nabla_{x,y}\Psi(U) \nabla_{x,y} (\Psi(U)-V) dydx,
	\end{align*}
	it follows
	\begin{align*}
		\int_{\Omega}\int_0^\infty y^{1-2\alpha}|\nabla_{x,y} V|^2 dydx=	\int_{\Omega}\int_0^\infty y^{1-2\alpha}\nabla_{x,y}\Psi(U) \nabla_{x,y} V dydx.
	\end{align*}
	Using H\"older's inequality we find \eqref{es72}. The proof is complete.
\end{proof}
\begin{corollary}\label{stook-var-in-coro} Let $q_1,q_2>0$. Then,
	\begin{align}\label{es74}
	\int_\Omega |u|^{q_1-1}u(-\Delta)^\alpha(|u|^{q_2-1}u) dx\geq \frac{4q_1q_2}{(q_1+q_2)^2}\int_\Omega|(-\Delta)^{\frac{\alpha}{2}}(|u|^{\frac{q_1+q_2}{2}-1}u)|^2dx.
	\end{align}
\end{corollary}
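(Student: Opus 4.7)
The corollary is the standard algebraic consequence of Lemma \ref{le-Stook-Varopoulos}. The plan is to apply that lemma not to $u$ itself but to $v:=|u|^{q_2-1}u$, and then to choose $\psi$ so that $\psi(v)$ reproduces the weight $|u|^{q_1-1}u$ in the left-hand side of \eqref{es74}.

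Concretely, I would set $\psi(r):=|r|^{q_1/q_2-1}r$, which satisfies $\psi(v)=\mathrm{sgn}(u)|u|^{q_1}=|u|^{q_1-1}u$, so the left-hand side of \eqref{es74} becomes exactly $\int_\Omega \psi(v)(-\Delta)^\alpha v\,dx$. Since $\psi'(r)=(q_1/q_2)|r|^{q_1/q_2-1}\ge 0$, the primitive $\Psi$ required by Lemma \ref{le-Stook-Varopoulos} must satisfy $(\Psi'(r))^2=(q_1/q_2)|r|^{q_1/q_2-1}$, i.e.\ $\Psi'(r)=\sqrt{q_1/q_2}\,|r|^{(q_1-q_2)/(2q_2)}$. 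Integrating (and choosing $\Psi(0)=0$), I get
\begin{equation*}
\Psi(r)=\frac{2\sqrt{q_1q_2}}{q_1+q_2}\,\mathrm{sgn}(r)|r|^{(q_1+q_2)/(2q_2)},
\end{equation*}
so, writing $r=v=|u|^{q_2-1}u$ and simplifying the exponent via $|v|^{(q_1+q_2)/(2q_2)}=|u|^{(q_1+q_2)/2}$, one finds $\Psi(v)=\tfrac{2\sqrt{q_1q_2}}{q_1+q_2}\,|u|^{(q_1+q_2)/2-1}u$. Substituting into \eqref{es73} produces the factor $\tfrac{4q_1q_2}{(q_1+q_2)^2}$ and yields \eqref{es74}.

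The one technical nuisance is that $\psi$ need not be $C^2$ on $\mathbb{R}$ (it fails to be $C^2$ at the origin unless $q_1/q_2\ge 2$), whereas Lemma \ref{le-Stook-Varopoulos} is stated for $\psi\in C^2(\mathbb{R})$. I would handle this by approximation: replace $\psi$ by a sequence $\psi_\varepsilon\in C^2(\mathbb{R})$ with $\psi_\varepsilon'\ge 0$, $\psi_\varepsilon\to\psi$ and $\Psi_\varepsilon\to\Psi$ monotonically (for instance $\psi_\varepsilon(r):=(r^2+\varepsilon)^{(q_1/q_2-1)/2}r$, or a regularization that cuts off a small neighbourhood of the origin). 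Apply Lemma \ref{le-Stook-Varopoulos} to $\psi_\varepsilon$ and pass to the limit using Fatou on the right-hand side and dominated convergence on the left-hand side (valid since $u\in L^\infty$ and $(-\Delta)^\alpha v\in L^2$ in the contexts where we will use the corollary; in any case one may first assume $u\in C_c^\infty(\Omega)$ and extend by density).

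The only real obstacle is making the approximation argument rigorous in the limited regularity regime where $(-\Delta)^{\alpha/2}(|u|^{(q_1+q_2)/2-1}u)$ is only known a posteriori to belong to $L^2$; this is why I would first prove the inequality for smooth $u$ and then invoke it in the paper's later estimates by a standard density/approximation step compatible with the approximate equations already built in Section \ref{sec.apprx}.
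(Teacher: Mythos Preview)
Your proposal is correct and follows exactly the paper's approach: the paper also sets $v=|u|^{q_2-1}u$, $\psi(v)=|v|^{q_1/q_2-1}v$, computes the corresponding $\Psi$, and applies Lemma~\ref{le-Stook-Varopoulos}. Your extra care in handling the failure of $\psi\in C^2$ at the origin via approximation is a detail the paper omits, and your choice of the odd antiderivative $\Psi(r)=\tfrac{2\sqrt{q_1q_2}}{q_1+q_2}\,\mathrm{sgn}(r)|r|^{(q_1+q_2)/(2q_2)}$ is in fact the one needed to match the stated conclusion \eqref{es74} (the paper writes $\Psi$ without the sign, a harmless slip).
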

\begin{proof} Set $v=|u|^{q_2-1}u$ and $\psi(v)=|v|^{\frac{q_1}{q_2}-1}v$ and $\Psi(v)=\left[\frac{4q_1q_2}{(q_1+q_2)^2}\right]^{1/2}|v|^{\frac{q_1}{2q_2}+\frac{1}{2}}$. We have,$
	\psi(v)=|u|^{q_1-1}u,\psi'=[\Psi']^2.$
	Thus, it follows \eqref{es74} from Lemma \ref{le-Stook-Varopoulos}. The proof is complete.
\end{proof}

\section{A regularized problem}\label{sec.reg}

In this section, we will prove existence of solutions to the following regularized problem:
\begin{equation}\label{pro4}
\left\{
\begin{array}
[c]{l}%
\partial_t u-\delta\Delta u-\operatorname{div}(|u|^{m_1}\nabla (-\Delta)^{-s} |u|^{m_2-1}u)=f~~\text{in }\Omega_T,\\
u=0~~~~~~~~~~~~~~~~~~~~~~~~~~~~~~~~~~~~~~~~~~~~~~~~~~~~~~\text{on}
~\partial\Omega\times (0,T),
\\
u(0)=u_0~~~~~~~~~~~~~~~~~~~~~~~~~~~~~~~~~~~~~~~~~~~~~~~~~\text{in}
~~\Omega,
\\
\end{array}
\right.
\end{equation}
with $\delta\in (0,1)$.
\begin{theorem} \label{step4}  Let $u_0\in L^\infty(\Omega)$ and $f\in L^\infty(\Omega_T)$.
	Then, there exists a weak solution
	$$u\in L^\infty(\Omega_T)\cap C(0,T;L^2(\Omega))\cap L^2(0,T;H^1_0(\Omega))$$
	of problem \eqref{pro4}.
\end{theorem}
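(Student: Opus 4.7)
My approach is to construct $u$ via Schauder's fixed-point theorem, exploiting the uniform parabolicity furnished by the extra $-\delta \Delta u$ term to decouple the fixed-point analysis from the nonlocal pressure. With $M := \|u_0\|_{L^\infty(\Omega)} + T\|f\|_{L^\infty(\Omega_T)}$, consider the closed convex set
\[
K_M := \bigl\{ v \in L^2(\Omega_T) : \|v\|_{L^\infty(\Omega_T)} \le M \bigr\}
\]
equipped with the $L^2(\Omega_T)$ topology. For $v \in K_M$, define $\mathcal{T}(v) := u$ to be the unique weak solution of the \emph{linear} uniformly parabolic Dirichlet problem
\[
\partial_t u - \delta \Delta u = \operatorname{div}(G_v) + f, \qquad G_v := |v|^{m_1}\nabla (-\Delta)^{-s}(|v|^{m_2-1}v),
\]
with the prescribed initial and boundary data. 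Since $v \in L^\infty$, spectral $L^p$-regularity of the inverse fractional Laplacian guarantees $G_v \in L^2(\Omega_T;\mathbb{R}^N)$ (after, if necessary, first approximating the nonlocal operator by the $\mathcal{L}_\varepsilon^s$ from Section~\ref{sec.apprx} and passing to the limit), and standard linear parabolic theory then supplies $u \in C([0,T];L^2(\Omega)) \cap L^2(0,T;H^1_0(\Omega))$ with $\partial_t u \in L^2(0,T; H^{-1}(\Omega))$.

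The key step is the invariance $\mathcal{T}(K_M) \subset K_M$. Since $\operatorname{div}(G_v)$ carries no definite sign, a direct comparison principle is unavailable; instead I would perform a Stampacchia/Moser iteration. Testing the linear equation against $(u - K(t))_+^{2k-1}$ with $K(t) := \|u_0\|_\infty + t\|f\|_\infty$ yields a coercive $\delta \int |\nabla (u-K(t))_+^k|^2$ contribution that absorbs the nonlocal divergence source via H\"older's inequality between $G_v \in L^p$ (for $p$ large) and $\nabla (u-K(t))_+^k \in L^2$; iterating in $k$ closes the $L^\infty$ bound, and a symmetric argument handles $(u-K(t))_-$.

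Continuity and compactness of $\mathcal{T}$ on $K_M$ are then routine. If $v_n \to v$ in $L^2(\Omega_T)$ with $\|v_n\|_\infty \le M$, dominated convergence gives $|v_n|^{m_1} \to |v|^{m_1}$ and $|v_n|^{m_2-1}v_n \to |v|^{m_2-1}v$ in every $L^q(\Omega_T)$ with $q<\infty$; continuity of $(-\Delta)^{-s}$ on $L^p$-spaces propagates this to $G_{v_n} \to G_v$ in $L^2(\Omega_T)$, and linear continuous dependence furnishes $\mathcal{T}(v_n) \to \mathcal{T}(v)$ in the energy norm. Compactness of $\mathcal{T}(K_M) \subset L^2(\Omega_T)$ follows from the Aubin--Lions lemma applied to the uniform $L^2(0,T;H^1_0)$ and $L^2(0,T;H^{-1})$ bounds on $u$ and $\partial_t u$. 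Schauder's theorem then produces a fixed point $u = \mathcal{T}(u)$, which is the desired weak solution, with the claimed regularity built into the construction.

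The main obstacle is the invariance step $\mathcal{T}(K_M)\subset K_M$: it is the only place where the full nonlocal structure must be reconciled with the $L^\infty$ a priori bound, and it requires quantitative use of $L^p$-regularity of the spectral fractional Laplacian against merely bounded data together with the Moser iteration. Everything else is linear-parabolic routine, which works precisely because the $\delta$-regularization enforces uniform ellipticity.
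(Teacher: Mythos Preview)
The invariance step $\mathcal{T}(K_M)\subset K_M$ does not close, and this is a genuine gap rather than a technical wrinkle. Once you freeze $v$ and treat $\operatorname{div}(G_v)$ as a source in a \emph{linear} parabolic equation, the monotone structure that produces the clean bound $\|u\|_{L^\infty}\le \|u_0\|_{L^\infty}+T\|f\|_{L^\infty}$ is destroyed. Testing the linear equation against $(u-K(t))_+^{2k-1}$ leaves the cross term
\[
\int G_v\cdot \nabla\bigl[(u-K(t))_+^{2k-1}\bigr]
\]
with no sign whatsoever, because $G_v$ depends on $v$, not on $u$. Absorbing it by Young/H\"older costs a factor $\delta^{-1}\|G_v\|_{L^p}^2$ on the right, and any Stampacchia/Moser iteration then delivers a bound of the form $\|u\|_{L^\infty}\le C(\delta^{-1})\bigl(\|u_0\|_{L^\infty}+\|f\|_{L^\infty}+\|G_v\|_{L^p}\bigr)$. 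Since $\|G_v\|_{L^p}$ scales like $M^{\gamma}$ with $\gamma=m_1+m_2$, for $\gamma>1$ there is no choice of $M$ making $C(\delta^{-1})(C_0+M^{\gamma})\le M$, and even for $\gamma\le 1$ your specific choice $M=\|u_0\|_\infty+T\|f\|_\infty$ is incompatible with the extra $\delta^{-1}$-dependent constant. A secondary but real issue is that for $s<\tfrac12$ and merely bounded $v$ one does not have $G_v\in L^2(\Omega_T)$: $\nabla(-\Delta)^{-s}$ has positive differential order $1-2s$, so the linear problem is not even set up in the energy framework without the further regularization you allude to only parenthetically.

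The paper avoids both difficulties by \emph{not} linearizing. It introduces several additional layers of regularization (the operator $\mathcal{L}_\varepsilon^{1-s}$ replacing $(-\Delta)^{1-s}$, Lipschitz truncations $H_{\kappa_2},G_{\kappa_2},J_{\kappa_1}$ of the power nonlinearities, and an auxiliary dissipative term $\varpi(-\Delta)^{s_0}(|u|^{m_0-1}u)$), and at the innermost level solves the fully nonlinear problem by a Banach contraction in $L^\infty(\Omega_{T_0})$ for short time. The crucial $L^\infty$ a priori bound, needed to continue the solution to time $T$, is obtained by testing the \emph{nonlinear} equation against $|u|^{q-2}u$ and using that the nonlocal term has a sign by the C\'ordoba--C\'ordoba/Stroock--Varopoulos inequalities; this is precisely the mechanism your linearization discards. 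The regularizations are then peeled off one at a time ($\varepsilon\to0$, $\kappa_1\to0$, $\kappa_2\to0$, $\varpi\to0$) via uniform energy estimates and Simon-type compactness, the extra $\varpi$-term being what supplies the missing fractional regularity when $s<\tfrac12$.
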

In this section, we set
\begin{align}
X:=L^\infty(\Omega_T)\cap C(0,T;L^2(\Omega))\cap L^2(0,T;H^1_0(\Omega)).
\end{align}
Here and in what follows, we use the following definition:
\begin{definition} Let $u_0\in L^\infty(\Omega)$ and $f\in L^\infty(\Omega_T)$. We say that
	$u\in X$ is a weak solution  of
	\begin{equation}\label{pro2*}
	\left\{
	\begin{array}
	[c]{l}%
	\partial_tu-\delta\Delta u+\mathcal{F}(u)=f~~\text{in }\Omega_T,\\
	u=0~~~~~~~~~~~~~~~~~~~~~~~\text{on}
	~~\partial\Omega\times (0,T),
	\\
	u(0)=u_0~~~~~~~~~~~~~~~~~~\text{in}
	~~\Omega,
	\\
	\end{array}
	\right.
	\end{equation}
	if $\mathcal{F}(u)\in L^1(0,T,(W_0^{2,\infty}(\Omega))^*)$ and
	\begin{align*}
	\int_{0}^{T}\int_\Omega u(-\varphi_t-\delta\Delta \varphi)dxdt-\int_{0}^{T} \langle \mathcal{F}(u),\varphi\rangle dt=\int_\Omega\varphi(0)u_0 dx+\int_{0}^{T}\int_\Omega \varphi f dxdt
	\end{align*}
	for all $\varphi\in C_c^1([0,T), (W_0^{2,\infty}(\Omega))^*)$.
\end{definition}
In order to construct the weak solution of problem  \eqref{pro4}, we first consider the following  problem
\begin{equation}\label{pro2}
\left\{
\begin{array}
[c]{l}%
\partial_tu-\delta\Delta u-\operatorname{div}(H_{\kappa_2}(|u|)\nabla (-\Delta)^{-1}\mathcal{L}_\varepsilon^{1-s} [G_{\kappa_2}(u)])+\varpi\mathcal{L}_\varepsilon^{s_0} J_{\kappa_1}(u)=f~~\text{in }\Omega_T,\\
u=0~~~~~~~~~~~~~~~~~~~~~~~~~~~~~~~~~~~~~~~~~~~~~~~~~~~~~~~~~~~~~~~~~~~~~~~~~~~~~~~~~~~\text{on}
~\partial\Omega\times (0,T),
\\
u(0)=u_0~~~~~~~~~~~~~~~~~~~~~~~~~~~~~~~~~~~~~~~~~~~~~~~~~~~~~~~~~~~~~~~~~~~~~~~~~~~~~~\text{in}
~~\Omega,
\\
\end{array}
\right.
\end{equation}
where $s_0=\frac{(1-2s)^++1}{2}\in (0,1)$, $\varpi,\kappa_1,\kappa_2\in (0,1)$ and
\begin{align*}
J_{\kappa_1}(u)=\frac{|u|^{m_0+1}u}{u^2+\kappa_1},~ H_{\kappa_2}(|u|)=\frac{|u|^{m_1+2}}{u^2+\kappa_2},~G_{\kappa_2}(u)=\frac{|u|^{m_2+1}u}{u^2+\kappa_2}~\text{with}~m_0=\frac{1}{8}\min\{m_1,m_2\}.
\end{align*}

\begin{proposition} \label{step1}Let $f\in L^\infty(\Omega_T),u_0\in L^\infty(\Omega)$. Then, problem \eqref{pro2} admits a weak solution $u\in C(0,T,L^\infty(\Omega))\cap L^2(0,T, H^1_0(\Omega))$.
\end{proposition}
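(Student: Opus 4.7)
The plan is to construct a solution via Galerkin approximation in the $L^2$-orthonormal basis $\{\varphi_j\}_{j\ge 1}$ of eigenfunctions of $-\Delta$. All nonlinearities in \eqref{pro2} are tame thanks to the regularization: $H_{\kappa_2}, G_{\kappa_2}, J_{\kappa_1}$ are $C^2$ functions with at most polynomial growth; the operators $\mathcal{L}_\varepsilon^{1-s}, \mathcal{L}_\varepsilon^{s_0}$ are bounded on $L^\infty(\Omega)$ by~\eqref{es4}; and $(-\Delta)^{-1}$ maps $L^\infty(\Omega)$ into $W^{2,p}(\Omega)\cap W^{1,p}_0(\Omega)$ for every $p<\infty$ by classical elliptic regularity. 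Seeking $u_n(t,x)=\sum_{j=1}^n c_j(t)\varphi_j(x)$ satisfying
\begin{equation*}
\langle \partial_t u_n,\varphi_j\rangle + \delta\lambda_j c_j + \langle H_{\kappa_2}(|u_n|)\nabla(-\Delta)^{-1}\mathcal{L}_\varepsilon^{1-s}[G_{\kappa_2}(u_n)],\nabla\varphi_j\rangle + \varpi\langle \mathcal{L}_\varepsilon^{s_0}[J_{\kappa_1}(u_n)],\varphi_j\rangle = \langle f,\varphi_j\rangle
\end{equation*}
for $j=1,\ldots,n$ with $c_j(0)=\langle u_0,\varphi_j\rangle$ gives an ODE system whose right-hand side is locally Lipschitz in $(c_1,\ldots,c_n)$, so Cauchy--Lipschitz yields a local-in-time solution.

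The crucial step is a uniform a priori bound. Testing the projected equation against $u_n$, integrating by parts in the divergence term, and setting $\mathcal{H}(u):=\int_0^u H_{\kappa_2}(|\sigma|)\,d\sigma$ (odd and strictly increasing on $\mathbb{R}$, so that $H_{\kappa_2}(|u_n|)\nabla u_n=\nabla\mathcal{H}(u_n)$), the nonlocal nonlinear contribution becomes
\begin{equation*}
\int_\Omega \mathcal{H}(u_n)\,\mathcal{L}_\varepsilon^{1-s}[G_{\kappa_2}(u_n)]\,dx \ge 0
\end{equation*}
by the monotonicity inequality~\eqref{mono1} applied with $H=\mathcal{H}$ and $G=G_{\kappa_2}$, both strictly increasing; similarly $\varpi\int_\Omega u_n\,\mathcal{L}_\varepsilon^{s_0}[J_{\kappa_1}(u_n)]\,dx\ge 0$. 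Together with the viscous term $\delta\|\nabla u_n\|_{L^2}^2$, this yields a uniform bound in $L^\infty(0,T;L^2(\Omega))\cap L^2(0,T;H^1_0(\Omega))$ and thus global-in-time existence of $u_n$. Testing the projected equation against $|u_n|^{p-2}u_n$ for each $p\ge 2$ (again exploiting~\eqref{mono1}) and iterating via Moser/De Giorgi upgrades this to a uniform $L^\infty(\Omega_T)$ estimate on $u_n$.

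To pass to the limit, the uniform bounds on $u_n$ in $L^\infty(\Omega_T)\cap L^2(0,T;H^1_0(\Omega))$ together with bounds on $\partial_t u_n$ in $L^2(0,T;H^{-1}(\Omega))$ read from the equation permit, via the Aubin--Lions lemma, extraction of a subsequence converging strongly in $L^2(\Omega_T)$ (and almost everywhere). Continuity of the truncated nonlinearities and of the operators $(-\Delta)^{-1},\mathcal{L}_\varepsilon^\alpha$ then lets us pass to the limit in every term, producing the desired weak solution in $C(0,T;L^\infty(\Omega))\cap L^2(0,T;H^1_0(\Omega))$. I expect the main obstacle to be securing the uniform $L^\infty$ bound on $u_n$: it relies on combining the linear viscous dissipation, the monotonicity property~\eqref{mono1} of $\mathcal{L}_\varepsilon^{s_0}\circ J_{\kappa_1}$, and a careful $L^p$--bootstrap, which is precisely the reason for introducing the extra $\varpi$--regularization with exponent $s_0$ in \eqref{pro2}.
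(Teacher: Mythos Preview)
Your Galerkin scheme is a reasonable instinct, but there is a genuine gap at exactly the point you flag as the main obstacle. The projected equation holds only against test functions in $V_n=\operatorname{span}\{\varphi_1,\dots,\varphi_n\}$. Testing with $u_n\in V_n$ is legitimate and gives the $L^\infty(0,T;L^2)\cap L^2(0,T;H^1_0)$ bound via~\eqref{mono1}, as you say. But $|u_n|^{p-2}u_n\notin V_n$ in general, so you cannot test the projected equation with it. If you move the projector $P_n$ to the other side you get $\langle \mathcal{L}_\varepsilon^{1-s}[G_{\kappa_2}(u_n)],\, P_n(\text{primitive of }|u_n|^{p-2}H_{\kappa_2}(|u_n|))\rangle$, and $P_n$ destroys the monotone structure needed for~\eqref{mono1}. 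Without the $L^\infty$ bound at the approximate level, the nonlinearities $H_{\kappa_2},G_{\kappa_2},J_{\kappa_1}$ (which still grow like $|u|^{m_1},|u|^{m_2},|u|^{m_0}$) are not controlled by the $L^2$ energy, and the Aubin--Lions compactness does not suffice to pass to the limit in the nonlinear terms.

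The paper avoids this by working directly in $L^\infty(\Omega_{T_0})$ via a Banach fixed point on the Duhamel map
\[
\mathcal{T}(v)(t)=e^{\delta t\Delta}u_0+\int_0^t e^{\delta(t-\tau)\Delta}\Theta(v,f)\,d\tau,
\]
using the $L^\infty$ smoothing of the heat semigroup (Lemma~\ref{le12}) together with~\eqref{es4} and~\eqref{es2} to get a contraction for small $T_0$. The resulting fixed point is an honest solution of the PDE (no projection), so one may then legitimately test with $|u|^{q-2}u$ and invoke~\eqref{mono1} to obtain the $q$-independent bound $\|u(t)\|_{L^q}\le CT\|f\|_{L^q}+2\|u_0\|_{L^q}$; letting $q\to\infty$ gives a global $L^\infty$ bound that rules out blow-up and extends the solution to all of $[0,T]$. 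The $C(0,T;L^\infty)$ regularity and membership in $L^2(0,T;H^1_0)$ then come for free from the Duhamel representation. Incidentally, the $\varpi\mathcal{L}_\varepsilon^{s_0}J_{\kappa_1}$ term plays no role in this $L^\infty$ bound beyond contributing a nonnegative sign; its purpose is to produce the $H^{s_0}$ regularity of $|u|^{m_0-1}u$ needed later (Proposition~\ref{step2*}) when $s<1/2$, not to drive the Moser iteration here.
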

\begin{proof} Let $T_0\in (0,1)$. We consider
	\begin{align*}
	\mathcal{T}:v\mapsto e^{t\Delta}u_0+\int_{0}^{t}e^{\delta(t-\tau)\Delta}  \Theta(v,f)d\tau,
	\end{align*}
	 for $v\in L^\infty(\Omega_{T_0})$, where
	 \begin{align*}
	 \Theta(v,f)=\operatorname{div}(H_{\kappa_2}(|v|)\nabla (-\Delta)^{-1}\mathcal{L}_\varepsilon^{1-s} [G_{\kappa_2}(v)])-\varpi\mathcal{L}_\varepsilon^{s_0} J_{\kappa_1}(v)+f.
	 \end{align*}
	 Using \eqref{es4} and  \eqref{para3} with $u_0=0$ and $g=H_{\kappa_2}(|v|)\nabla (-\Delta)^{-1}\mathcal{L}_\varepsilon^{1-s} [G_{\kappa_2}(v)]$  yields
	 \begin{align*}
	 &\left|e^{\delta(t-\tau)\Delta}\operatorname{div}(H_{\kappa_2}(|v|)\nabla (-\Delta)^{-1}\mathcal{L}_\varepsilon^{1-s} [G_{\kappa_2}(v)])\right|\leq \frac{C}{\sqrt{t-\tau}} ||v||_{L^\infty(\Omega)}^{\gamma+4},\\&
	 \left|e^{\delta(t-\tau)\Delta} \mathcal{L}_\varepsilon^{s_0} J_{\kappa_1}(v)\right|\leq C||v||_{L^\infty(\Omega)}^{m_0+2},
	 \end{align*}
	for any $0<\tau<t$, where $C=C(\varepsilon,\kappa_1,\kappa_2,s_0,s,N,\Omega)$.
Thus, the operator $\mathcal{T}$ is well-defined and map from $L^\infty(\Omega_{T_0})$ into itself. Moreover, since $ \Theta(v,f)\in L^\infty(0,T_0,(W^{1,1}_0(\Omega))^*)+L^\infty(\Omega_{T_0})$, so by standard properties, we have
\begin{align}\label{es64}
\mathcal{T} (L^\infty(\Omega_{T_0}))\subset C(0,T_0,L^\infty(\Omega))\cap L^2(0,T_0, H^1_0(\Omega)).
\end{align} Next,
we show that $\mathcal{T}$ has a fixed point by the Banach contraction principle provided that $T_0=T_0(||u_0||_{L^\infty(\Omega)},\varepsilon,H,G,\delta)$. To do that, we have the following claim:

\textbf{Claim:} for any $K>0$,  for all $u,v\in \overline{B}(0,K)\subset L^\infty(\Omega_{T_0})$ there holds
\begin{align}\label{contrac}
||\mathcal{T}(u)-\mathcal{T}(v)||_{L^\infty(\Omega_{T_0})}\leq C_1(K)\sqrt{T_0}||u-v||_{L^\infty(\Omega_{T_0})},
\end{align}
where $C_1(K)$ is a constant which also depend on $s,N,\kappa_1,\kappa_2,\varepsilon,\Omega,K.$ Indeed, set \begin{align*}
E:=H_{\kappa_1}(|u|)\nabla (-\Delta)^{-1}\mathcal{L}_\varepsilon^{1-s} [G_{\kappa_1}(u)]-H_{\kappa_1}(|v|)\nabla (-\Delta)^{-1}\mathcal{L}_\varepsilon^{1-s} [G_{\kappa_1}(v)]
\end{align*}
We have,
\begin{align*}
&||E||_{L^\infty(\Omega)}\leq C(K)||u-v||_{L^\infty(\Omega)}||\nabla (-\Delta)^{-1}\mathcal{L}_\varepsilon^{1-s} [G_{\kappa_2}(u)]||_{L^\infty(\Omega)}\\&~+C(K)||\nabla (-\Delta)^{-1}\mathcal{L}_\varepsilon^{1-s} [G_{\kappa_2}(u)-G_{\kappa_2}(v)]||_{L^\infty(\Omega)}
\\&\overset{\eqref{es2}}\leq C(K)||u-v||_{L^\infty(\Omega)}||\mathcal{L}_\varepsilon^{1-s} [G_{\kappa_2}(u)]||_{L^\infty(\Omega)}+C(K)||\mathcal{L}_\varepsilon^{1-s} [G_{\kappa_2}(u)-G_{\kappa_2}(v)]||_{L^\infty(\Omega)}
\\&\overset{\eqref{es4}}\leq C(K)||u-v||_{L^\infty(\Omega)}+C(K)||G_{\kappa_2}(u)-G_{\kappa_2}(v)||_{L^\infty(\Omega)}
\\&\leq C(K)||u-v||_{L^\infty(\Omega)},
\end{align*}
where $C(K)$ is a constant which also depend on $s,N,\kappa_1,\kappa_2,\varepsilon,\Omega,K.$\\
Using \eqref{para3+}  in Lemma \ref{le12} with  $g=E$,  we get  for $t\in (0,T_0)$,
\begin{align*}
&|\mathcal{T}(u)(t)-\mathcal{T}(v)(t)|\\&\leq \left|\int_{0}^{t}e^{\delta(t-\tau)\Delta}\operatorname{div}(E(\tau)) d\tau\right|+\left|\int_{0}^{t}e^{\delta(t-\tau)\Delta} \mathcal{L}_\varepsilon^{s_0} \left(J_{\kappa_1}(v)- J_{\kappa_1}(u)\right)d\tau\right|\\ & \leq C(K)\int_{0}^{t}\frac{1}{\sqrt{t-\tau}} d\tau ||u-v||_{L^\infty(\Omega_{T_0})}+C\int_{0}^{t}d\tau ||J_{\kappa_1}(v)- J_{\kappa_1}(u)||_{L^\infty(\Omega_{T_0})}
\\ & \leq C(K)\sqrt{t} ||u-v||_{L^\infty(\Omega_{T_0})}.
\end{align*}
It follows \eqref{contrac}. Thus, we get for $u\in \overline{B}(0,K)\subset C(0,T_0;L^\infty(\Omega)) $
\begin{align*}
||\mathcal{T}(u)||_{L^\infty(\Omega_{T_0})}&\leq ||\mathcal{T}(0)||_{L^\infty(\Omega_{T_0})} + C_1(K)\sqrt{T_0}||u||_{L^\infty(\Omega_{T_0})}\\&\leq C||f||_{L^\infty(\Omega_T)}+2||u_0||_{L^\infty(\Omega)}+KC_1(K)\sqrt{T_0}.
\end{align*}
Now,  choosing $K=2(C||f||_{L^\infty(\Omega_T)}+2||u_0||_{L^\infty(\Omega)})$ and $T_0=\frac{1}{4(C_1(K))^2}$ yields
\begin{align}
||\mathcal{T}(u)||_{L^\infty(\Omega_{T_0})}\leq K.
\end{align}
This means, $\mathcal{T}$ maps $\overline{B}(0,K)$ into itself and is a contraction. Hence, $\mathcal{T}$ has a fixed point in $L^\infty(\Omega_{T_0})$ for some $T_0>0$.\medskip\\ On the other hand, if $\mathcal{T}(u)=u$ in $L^\infty(\Omega_{T_1})$ then for all $q\geq3$ and $t\in (0,T_1)$
\begin{align*}
\int_\Omega |u(t)|^q&+\delta(q-1)\int_{0}^{t}\int_\Omega|u|^{q-1}|\nabla u|^2+(q-2)\int_{0}^{t}\int_\Omega |u|^{q-2} H_{\kappa_2}(|u|)\nabla (-\Delta)^{-1}\mathcal{L}_\varepsilon^{1-s}[ G_{\kappa_2}(u)]\nabla u \\&~~~+\int_{0}^{t}\int_\Omega (-\Delta)^{s_0} J_{\kappa_1}(u) |u|^{q-2}u =\int_{0}^{t}\int_{\Omega}f(t)|u(t)|^{q-1}u(t)+\int_\Omega|u_0|^q.
\end{align*}
Since\begin{align*}
&(q-2)\int_\Omega |u|^{q-2} H_{\kappa_2}(|u|)\nabla (-\Delta)^{-1}\mathcal{L}_\varepsilon^{1-s} [G_{\kappa_2}(u)]\nabla udx\\&~~~~~=(q-2)\int_{\Omega} \nabla (-\Delta)^{-1}\mathcal{L}_\varepsilon^{1-s} [G_{\kappa_2}(u)]\nabla \tilde{H}_{\kappa_2}(u)dx
\\&~~~~~=(q-2)\int_{\Omega} \mathcal{L}_\varepsilon^{1-s} [G_{\kappa_2}(u)]\tilde{H}_{\kappa_2}(u)dx\overset{\eqref{mono1}~\text{ in Remark}~\ref{re1}}\geq 0,
\end{align*}
with $\tilde{H}_{\kappa_2}(a)=\int_{0}^{a} |y|^{q-2} H_{\kappa_2}(|y|)dy$ and $
\int_\Omega \mathcal{L}_\varepsilon^{s_0} J_{\kappa_1}(u) |u|^{q-2}u\geq 0$,  thus, for $t\in (0,T_1)$
\begin{align*}
\sup_{\tau\in [0,t]}\int_\Omega |u(\tau)|^q\leq \left(\int_{0}^{t}\int_{\Omega}|f|^q\right)^{1/q}\left(\int_{0}^{t}\int_{\Omega}|u|^q\right)^{(q-1)/q}+\int_\Omega|u_0|^qdx.
\end{align*}
Using H\"older's inequality we obtain
\begin{align}
\sup_{\tau\in [0,T_1]}||u(\tau)||_{L^q(\Omega)}\leq CT_1||f||_{L^q(\Omega_{T_1})}+2||u_0||_{L^q(\Omega)},
\end{align}
where $C$ does not depend on $q$.
Letting $q\to \infty$, we deduce,
\begin{align}
\sup_{\tau\in [0,T_1]}||u(\tau)||_{L^\infty(\Omega)}\leq CT_1||f||_{L^\infty(\Omega_{T_1})}+2||u_0||_{L^\infty(\Omega)}.
\end{align}
In particular, the norm $||u(T_1)||_{L^\infty(\Omega)}$ cannot explode for $T_1<T$.
Thus, there exists $u\in L^\infty(\Omega_T)$ such that $\mathcal{T}(u)=u$. By \eqref{es61}, $u\in C(0,T,L^\infty(\Omega))\cap L^2(0,T;H^1_0(\Omega))$. Hence,
$u$ is a weak solution of \eqref{pro2}.
 The proof is complete.
\end{proof}
\begin{remark} By standard regularity, we can see that the solution of $u$ in Proposition \eqref{step1} belongs to  $W^{1,r}(\tau,T;W^{2,r}(\Omega))$ for all $r<\infty$ and $\tau\in (0,T)$. Moreover, if $u_0,f$ are smooth functions, then $u$ is too.
\end{remark}
The following is a variant of Simon's compactness Lemma for Space $L^1(0,T;X)$ which will be used several times in this paper.
\begin{lemma}\label{Simonlemma} Let $(v_n)\subset L^1(\Omega_T)$ be such that
	\begin{align}\label{es75}
	||v_{n}||_{L^q(\Omega_T)}+|||v_n|^{\alpha_1-1}v_n||_{L^1(0,T;W^{\alpha_2,1}(\Omega))}+||\frac{\partial}{\partial t}v_n||_{L^1(0,T;(W_0^{2,\infty}(\Omega))^*)}\leq C~~\forall~~n.
	\end{align}
	with $\alpha_1>0,q>1,\alpha_2\in (0,1)$. There exists a subsequence of $\{v_n\}$ converging to $v$ in $L^1(\Omega_T)$. 	
\end{lemma}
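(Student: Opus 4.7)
The plan is to reduce this to Simon's classical compactness theorem (J. Simon, \emph{Compact sets in the space $L^p(0,T;B)$}, Ann. Mat. Pura Appl. 146 (1987)) by constructing an appropriate intermediate Banach space adapted to the nonlinear quantity $|v_n|^{\alpha_1-1}v_n$. Define $\phi:\mathbb{R}\to\mathbb{R}$ by $\phi(s)=|s|^{\alpha_1-1}s$; this is a continuous strictly increasing bijection with continuous inverse $\phi^{-1}(t)=|t|^{1/\alpha_1-1}t$. Introduce the Banach space
\[
X:=\{u\in L^q(\Omega):\phi(u)\in W^{\alpha_2,1}(\Omega)\},\qquad \|u\|_X:=\|u\|_{L^q(\Omega)}+\|\phi(u)\|_{W^{\alpha_2,1}(\Omega)}.
\]
Under this definition, hypothesis \eqref{es75} says exactly that $(v_n)$ is bounded in $L^1(0,T;X)$ while $(\partial_t v_n)$ is bounded in $L^1(0,T;(W_0^{2,\infty}(\Omega))^*)$.

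Next I would establish the compact embedding $X\hookrightarrow\hookrightarrow L^1(\Omega)$. Take a bounded sequence $(u_n)\subset X$; then $\phi(u_n)$ is bounded in $W^{\alpha_2,1}(\Omega)$, which embeds compactly into $L^1(\Omega)$ by the standard Rellich-Kondrachov-type theorem for fractional Sobolev spaces on a bounded smooth domain. Extracting a subsequence, $\phi(u_{n_k})\to g$ strongly in $L^1(\Omega)$ and pointwise a.e., so by continuity of $\phi^{-1}$ one has $u_{n_k}=\phi^{-1}(\phi(u_{n_k}))\to\phi^{-1}(g)$ a.e. The uniform bound on $\|u_n\|_{L^q}$ with $q>1$ gives equi-integrability, hence Vitali's convergence theorem upgrades a.e.\ convergence to strong convergence in $L^1(\Omega)$. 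The continuous embedding $L^1(\Omega)\hookrightarrow (W_0^{2,\infty}(\Omega))^*$ is trivial (functions in $L^1$ act on $W_0^{2,\infty}$ by integration).

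With the chain of embeddings $X\hookrightarrow\hookrightarrow L^1(\Omega)\hookrightarrow (W_0^{2,\infty}(\Omega))^*$ and the above bounds on $(v_n)$ and $(\partial_t v_n)$, Simon's theorem yields that $(v_n)$ is relatively compact in $L^1(0,T;L^1(\Omega))=L^1(\Omega_T)$, from which the subsequence $v_n\to v$ strongly in $L^1(\Omega_T)$ is extracted.

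The main obstacle is the compact embedding $X\hookrightarrow\hookrightarrow L^1(\Omega)$: compactness of $\phi(u_n)$ in $L^1$ translates only into pointwise a.e.\ convergence of $u_n$ through the inverse map $\phi^{-1}$, so the $L^q$-bound with $q>1$ is essential to provide the equi-integrability required by Vitali. If one only had $q=1$, this step would fail, since near the set $\{|v_n|\text{ small}\}$ the inverse $\phi^{-1}$ can magnify $L^1$ differences substantially when $\alpha_1>1$, and conversely near $\{|v_n|\text{ large}\}$ when $\alpha_1<1$; uniform higher integrability is exactly what controls both regimes simultaneously.
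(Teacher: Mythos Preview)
Your overall strategy---extract spatial compactness from the $W^{\alpha_2,1}$ bound on $\phi(v_n)$, use the $L^q$ bound with $q>1$ for equi-integrability, and couple this with the time-derivative bound via Simon---is the right one, and your argument that ``bounded subsets of $X$'' are relatively compact in $L^1(\Omega)$ is correct. The gap is the claim that $X$ is a Banach space. Since $\phi(s)=|s|^{\alpha_1-1}s$ is nonlinear, the functional $u\mapsto\|\phi(u)\|_{W^{\alpha_2,1}}$ is neither positively homogeneous ($\|\phi(\lambda u)\|=|\lambda|^{\alpha_1}\|\phi(u)\|$) nor subadditive, so $\|\cdot\|_X$ is not a norm; it is not even clear that $X$ is closed under addition. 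Simon's theorem as stated requires genuine Banach spaces $X\hookrightarrow\hookrightarrow B\hookrightarrow Y$, so it cannot be invoked as a black box on your $X$.

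The paper avoids this by splitting into cases. For $\alpha_1\ge 1$ one has the pointwise inequality $|a-b|^{\alpha_1}\le C\,|\phi(a)-\phi(b)|$, which yields $\|v_n\|_{W^{\alpha_2/\alpha_1,\alpha_1}(\Omega)}\le C\|\phi(v_n)\|_{W^{\alpha_2,1}(\Omega)}^{1/\alpha_1}$, so the honest Banach space $W^{\alpha_2/\alpha_1,\alpha_1}(\Omega)$ replaces your $X$ and Simon applies directly. For $\alpha_1<1$ this trick fails; instead the paper first applies Simon with the compact pair $L^q(\Omega)\hookrightarrow\hookrightarrow (W_0^{2,\infty}(\Omega))^*$ to get convergence of a subsequence in $L^1\bigl(0,T;(W_0^{2,\infty}(\Omega))^*\bigr)$, and then upgrades to $L^1(\Omega_T)$ via a \emph{nonlinear} Ehrling-type inequality: for every $\eta>0$ there exists $C_\eta$ with
\[
\|w-v\|_{L^1(\Omega)}^{\alpha_1}\le \eta\Bigl(\|\phi(w)-\phi(v)\|_{W^{\alpha_2,1}(\Omega)}+\|w-v\|_{L^q(\Omega)}^{\alpha_1}\Bigr)+C_\eta\|w-v\|_{(W_0^{2,\infty}(\Omega))^*}^{\alpha_1},
\]
proved by the same compactness--contradiction scheme as Simon's Lemma~8. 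This is precisely the nonlinear substitute for the linear Ehrling lemma that your Banach-space packaging was meant to deliver. Your Vitali argument is essentially what one would use to establish this inequality, so the fix is close to what you already wrote; you just cannot hide the nonlinearity inside a single Banach space and quote Simon in one line.
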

\begin{proof} If $\alpha_1\geq 1$, we have
	\begin{align*}
	||v_n||_{W^{\frac{\alpha_2}{\alpha_1},\alpha_1}(\Omega)}\leq C |||v_n|^{\alpha_1-1}v_0||_{W^{\alpha_2,1}(\Omega)}^{\frac{1}{\alpha_1}}\leq C,
	\end{align*}
	for all $n\in \mathbb{N}$. Thus,  by Simon's compactness Lemma, see \cite[Theorem 1 and Lemma 4]{Simon}, we find the conclusion for case $\alpha_1\geq 1$.\\
	We now consider case $\alpha_1\in (0,1)$. Since $L^q(\Omega)\subset (W_0^{2,\infty}(\Omega))^*$ is compact and
	\begin{align*}
	||v_{n}||_{L^q(\Omega_T)}+||\frac{\partial}{\partial t}v_n||_{L^1(0,T;(W_0^{2,\infty}(\Omega))^*)}\leq C~~\forall~~n,
	\end{align*}
	by Simon's compactness Lemma, see \cite[Theorem 1 and Lemma 4]{Simon}, there exists a subsequence $\{v_{n_k}\}$ of $\{v_{n}\}$  converging to $v$ in $L^1(0,T;(W_0^{2,\infty}(\Omega))^*)$. \\
	By a standard compact argument, see \cite[Lemma 8]{Simon}, for any $\eta>0$, there is a constant $C_\eta$ such that
	\begin{align*}
	||w-v||_{L^1(\Omega)}^{\alpha_1}&\leq \eta \left(|||w|^{\alpha_1-1}w-|v|^{\alpha_1-1}v||_{W^{\alpha_2,1}(\Omega)}+||w-v||_{L^q(\Omega)}^{\alpha_1}\right)\\&+C_\eta||w-v||_{(W_0^{2,\infty}(\Omega))^*}^{\alpha_1},
	\end{align*}
	for all $w\in L^q(\Omega), |w|^{\alpha_1-1}w\in W^{\alpha_2,1}(\Omega) $. This implies
	\begin{align*}
&\limsup_{k\to\infty}	||v_{n_k}-v||_{L^{\alpha_1}(0,T;L^1(\Omega))}^{\alpha_1}\\&\leq \eta \left(\limsup_{k\to\infty}|||v_{n_k}|^{\alpha_1-1}v_{n_k}-|v|^{\alpha_1-1}v||_{L^1(0,T;W^{\alpha_2,1}(\Omega))}+\limsup_{k\to\infty}||v_{n_k}-v||_{L^{\alpha_1}(0,T;L^q(\Omega))}^{\alpha_1}\right)\\&+C_\eta\limsup_{k\to\infty}||v_{n_k}-v||_{L^{\alpha_1}(0,T;(W_0^{2,\infty}(\Omega))^*)}\\&\leq C\eta +CC_\eta \limsup_{k\to\infty}||v_{n_k}-v||_{L^{1}(0,T;(W_0^{2,\infty}(\Omega))^*)}=C\eta.
	\end{align*}
	Letting $\eta\to 0$, we $v_{n_k}-v\to 0$ in $L^{\alpha_1}(0,T;L^1(\Omega))$. Finally, using an interpolation inequality we get  $v_{n_k}-v\to 0$ in $L^{1}(\Omega_T)$. The proof is complete.
\end{proof}
\begin{remark}\label{Simon-re} If $q=1$, we can show that there exists a subsequence of $\{v_n\}$ converging to $v$ in $L^{\theta}(\Omega_T)$ for all $\theta\in (0,1)$.
\end{remark}
\begin{proposition} \label{step2}Let $u_{\varepsilon}$ be a solution of problem \eqref{pro2} obtained in Proposition \ref{step1}. Then, there exists a subsequence of $\{u_{\varepsilon}\}$ converging to  a solution $u\in X$ of problem
	\begin{equation}\label{pro3}
	\left\{
	\begin{array}
	[c]{l}%
	\partial_tu-\delta\Delta u-\operatorname{div}(H_{\kappa_2}(|u|)\nabla (-\Delta)^{-s} G_{\kappa_2}(u))+\varpi(-\Delta)^{s_0} J_{\kappa_1}(u)=f~~\text{in }\Omega_T,\\
	u=0~~~~~~~~~~~~~~~~~~~~~~~~~~~~~~~~~~~~~~~~~~~~~~~~~~~~~~~~~~~~~~~~~~~~~~~~~~~\text{on}
	~\partial\Omega\times (0,T),
	\\
	u(0)=u_0~~~~~~~~~~~~~~~~~~~~~~~~~~~~~~~~~~~~~~~~~~~~~~~~~~~~~~~~~~~~~~~~~~~~~~\text{in}
	~\Omega,
	\\
	\end{array}
	\right.
	\end{equation}
	as $\varepsilon\to0$.
\end{proposition}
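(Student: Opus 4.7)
The strategy is to derive $\varepsilon$-uniform bounds on $u_\varepsilon$ in $X$ together with a bound on $\partial_t u_\varepsilon$ in $L^1(0,T;(W_0^{2,\infty}(\Omega))^*)$, extract a convergent subsequence via Lemma \ref{Simonlemma}, and then pass to the limit term by term in the weak formulation of \eqref{pro2}. The two objects that need care are the pressure gradient $\nabla(-\Delta)^{-1}\mathcal{L}_\varepsilon^{1-s}G_{\kappa_2}(u_\varepsilon)$ and the absorption $\mathcal{L}_\varepsilon^{s_0}J_{\kappa_1}(u_\varepsilon)$; their limits will be provided by part 2 of Lemma \ref{lem1} and by Remark \ref{re2}, once strong convergence of $u_\varepsilon$ is in hand.

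\emph{Uniform estimates.} The $L^\infty(\Omega_T)$-bound already obtained inside the proof of Proposition \ref{step1} used only the $L^q$ energy identity together with the positivity \eqref{es3} and the monotonicity in Remark \ref{re1}, so in fact it depends on $\|u_0\|_{L^\infty}$, $\|f\|_{L^\infty(\Omega_T)}$, $T$ alone, not on $\varepsilon$. Testing \eqref{pro2} against $u_\varepsilon$ and discarding the two nonnegative nonlocal contributions yields
\[
\sup_{t\in[0,T]}\|u_\varepsilon(t)\|_{L^2}^2+2\delta\int_0^T\|\nabla u_\varepsilon(t)\|_{L^2}^2\,dt\le \|u_0\|_{L^2}^2+C\|f\|_{L^2(\Omega_T)}^2,
\]
so that $\{u_\varepsilon\}$ is uniformly bounded in $X$. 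Reading $\partial_t u_\varepsilon$ off the equation and integrating by parts against a $W_0^{2,\infty}$-test function, each of the three terms is bounded in $L^1(0,T;(W_0^{2,\infty}(\Omega))^*)$ independently of $\varepsilon$: the viscous and divergence pieces use the $L^\infty$-bound \eqref{es4} on $\mathcal L_\varepsilon^{1-s}G_{\kappa_2}(u_\varepsilon)$ together with elliptic regularity for $(-\Delta)^{-1}$, while the absorption piece is handled by self-adjointness, moving $\mathcal L_\varepsilon^{s_0}$ onto the test function and applying \eqref{es4} once more.

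\emph{Compactness and passing to the limit.} Lemma \ref{Simonlemma} with $\alpha_1=1$ and any $\alpha_2\in(0,1)$ then yields a subsequence with $u_\varepsilon\to u$ strongly in $L^1(\Omega_T)$, hence a.e., and then in every $L^p(\Omega_T)$, $p<\infty$, by the $L^\infty$ bound. Since $H_{\kappa_2},G_{\kappa_2},J_{\kappa_1}$ are continuous and bounded on bounded intervals, the corresponding compositions converge strongly in every $L^p$. For the pressure I would use the telescoping identity
\begin{align*}
&\nabla(-\Delta)^{-1}\mathcal L_\varepsilon^{1-s}G_{\kappa_2}(u_\varepsilon)-\nabla(-\Delta)^{-s}G_{\kappa_2}(u)\\
&\qquad =\nabla(-\Delta)^{-1}\mathcal L_\varepsilon^{1-s}[G_{\kappa_2}(u_\varepsilon)-G_{\kappa_2}(u)]+\nabla\bigl[(-\Delta)^{-1}\mathcal L_\varepsilon^{1-s}-(-\Delta)^{-s}\bigr]G_{\kappa_2}(u),
\end{align*}
where the second summand tends to $0$ in $L^2(\Omega_T)$ by \eqref{con-L^2} (applied slicewise in $t$ and then combined with dominated convergence) and the first vanishes from the strong $L^2$-convergence of $G_{\kappa_2}(u_\varepsilon)$ paired with the $\varepsilon$-uniform $L^2(\Omega)\to H^1_0(\Omega)$ boundedness of $(-\Delta)^{-1}\mathcal L_\varepsilon^{1-s}$ inherited from the same estimate. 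The absorption term is transferred to the smooth test function $\varphi$; the convergence $\mathcal L_\varepsilon^{s_0}\varphi\to(-\Delta)^{s_0}\varphi$ in $L^2(\Omega_T)$ is immediate from Remark \ref{re2}. Assembling these limits in the weak formulation shows that $u$ solves \eqref{pro3}, and the membership $u\in X$ follows from weak lower semicontinuity together with the uniform bounds just obtained.

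\emph{Main obstacle.} The sensitive point is the passage to the limit in the pressure term: it is a product of the strongly-convergent factor $H_{\kappa_2}(|u_\varepsilon|)$ with $\nabla(-\Delta)^{-1}\mathcal L_\varepsilon^{1-s}G_{\kappa_2}(u_\varepsilon)$, whose convergence rests on the quantitative resolvent estimate \eqref{con-L^2} rather than on compactness. The truncations by $\kappa_1,\kappa_2$ and the viscosity $\delta>0$ are crucial here: they keep $H_{\kappa_2}(|u_\varepsilon|)$ uniformly bounded in $L^\infty$ with no degeneracy, and they give $u_\varepsilon$ an $\varepsilon$-uniform $H^1_0$-gradient, so the product passes cleanly to the limit in $L^1(\Omega_T)$ when tested against $\nabla\varphi$.
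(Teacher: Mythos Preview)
Your overall strategy (uniform bounds in $X$, a time-derivative bound, compactness via Lemma~\ref{Simonlemma}, then pass to the limit) matches the paper's, but two of your key estimates are not $\varepsilon$-uniform as stated, and this breaks the argument when $s<1/2$.

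First, the appeal to \eqref{es4} for the uniform $\partial_t u_\varepsilon$ bound does not work: \eqref{es4} reads $\|\mathcal{L}^\alpha_\varepsilon[f]\|_{L^\infty}\le \frac{2}{\alpha\varepsilon^\alpha}\|f\|_{L^\infty}$, so it blows up like $\varepsilon^{-(1-s)}$ and cannot yield an $\varepsilon$-independent control of the divergence term (the same objection applies to your treatment of the absorption term). The paper instead uses \eqref{con-H^1'}, which gives the uniform bound $\sup_{\varepsilon}\|(-\Delta)^{-1/2}\mathcal{L}_\varepsilon^{1-s}[G_{\kappa_2}(u_\varepsilon)]\|_{L^2}\le C\|G_{\kappa_2}(u_\varepsilon)\|_{H^1_0}$; the right-hand side is controlled precisely because the $\delta$-viscosity puts $u_\varepsilon\in L^2(0,T;H^1_0)$ and $G_{\kappa_2}$ is smooth. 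For the absorption, the paper uses \eqref{es65} in Remark~\ref{re2}, not \eqref{es4}.

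Second, and more seriously, your claim of ``$\varepsilon$-uniform $L^2(\Omega)\to H^1_0(\Omega)$ boundedness of $(-\Delta)^{-1}\mathcal L_\varepsilon^{1-s}$ inherited from \eqref{con-L^2}'' is false: the limit operator is $(-\Delta)^{-s}$, and $\nabla(-\Delta)^{-s}=(-\Delta)^{1/2-s}$ is unbounded on $L^2(\Omega)$ whenever $s<1/2$. So the first summand in your telescoping identity cannot be handled this way. The paper sidesteps the gradient entirely: it integrates by parts \emph{twice}, writing
\[
\int_{\Omega_T}\operatorname{div}\bigl(H_{\kappa_2}(|u_\varepsilon|)\nabla\varphi\bigr)\,(-\Delta)^{-1}\mathcal{L}_\varepsilon^{1-s}[G_{\kappa_2}(u_\varepsilon)]\,dx\,dt,
\]
and then pairs the weak $L^2$ convergence of the first factor with the strong $L^2$ convergence of the second, the latter coming from \eqref{con-L^2} (which \emph{is} an $\varepsilon$-uniform $L^2\to L^2$ estimate, since no gradient is taken). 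Your argument can be repaired along these lines, or alternatively by invoking \eqref{con-H^1} in place of \eqref{con-L^2} and using that $G_{\kappa_2}(u_\varepsilon)\to G_{\kappa_2}(u)$ strongly in $L^2(0,T;H^{1-2s})$ by interpolation between strong $L^2$ and bounded $H^1_0$; but as written the proof has a gap.
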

\begin{proof} Choosing $u_\varepsilon$ as test function in \eqref{pro2} we get
	\begin{align*}
|| u_\varepsilon||_{L^2(0,T;H^1_0(\Omega))}+||u_\varepsilon||_{L^\infty(\Omega_T)}\leq C~~\forall~~\varepsilon>0.
	\end{align*}
	By \eqref{con-H^1'} in Lemma \ref{lem1}, we have
	\begin{align*}
&	||\operatorname{div}(H_{\kappa_2}(|u_\varepsilon|)\nabla (-\Delta)^{-1}\mathcal{L}_\varepsilon^{1-s} [G_{\kappa_2}(u_\varepsilon)])||_{L^2(0,T;H^{-1}(\Omega))}\\&~~~~~~=	||H_{\kappa_2}(|u_\varepsilon|)\nabla (-\Delta)^{-1}\mathcal{L}_\varepsilon^{1-s} [G_{\kappa_2}(u_\varepsilon)]||_{L^2(\Omega_T)}
\\&~~~~~~\leq C	|| (-\Delta)^{-1/2}\mathcal{L}_\varepsilon^{1-s} [G_{\kappa_2}(u_\varepsilon)]||_{L^2(\Omega_T)}
\\&~~~~~~\leq C	|| G_{\kappa_2}(u_\varepsilon)||_{L^2(0,T;H^1(\Omega))}
\\&~~~~~~\leq C	.
	\end{align*}
	By \eqref{es65} in Remark \ref{re2}, for $s_1=\frac{s_0+1}{2}\in (s_0,1)$, we have
	\begin{align*}
	||\mathcal{L}_\varepsilon^{s_0} J_{\kappa_1}(u)||_{L^2(0,T;(H^1_0(\Omega)\cap H^{2s_1}(\Omega))^*)}\leq C.
	\end{align*}
	Thus,
	\begin{align*}
	||\partial_tu_\varepsilon-\delta\Delta u_\varepsilon||_{L^2(0,T;(H^1_0(\Omega)\cap H^{2s_1}(\Omega))^*)}+|| u_\varepsilon||_{L^2(0,T;H^1_0(\Omega))}+||u_\varepsilon||_{L^\infty(\Omega_T)}\leq C~\forall~\varepsilon\in (0,1).
	\end{align*}
	By Lemma \ref{Simonlemma}, there exists a subsequence of $\{u_\varepsilon\}$ converging to $u$ in $L^1(\Omega_T)$ as $\varepsilon\to0$. Moreover, we also have $u\in X$ and
$
	\lim\limits_{\varepsilon\to 0}	\mathcal{L}_\varepsilon^{s_0} [J_{\kappa_1}(u_\varepsilon)]= (-\Delta)^{s_0} J_{\kappa_1}(u)
	$
	in  $L^2(0,T; (H^1_0(\Omega)\cap H^{2s_1}(\Omega))^*)$ and
	\begin{align*}
&\lim\limits_{\varepsilon\to 0}	\int_{\Omega_T}\operatorname{div}(H_{\kappa_2}(|u_\varepsilon|)\nabla (-\Delta)^{-1}\mathcal{L}_\varepsilon^{1-s} [G_{\kappa_2}(u_\varepsilon)])\varphi dxdt\\&~~~~~~~~~~=
\lim\limits_{\varepsilon\to 0}	\int_{\Omega_T}\operatorname{div}(H_{\kappa_2}(|u_\varepsilon|)\nabla \varphi) (-\Delta)^{-1}\mathcal{L}_\varepsilon^{1-s} [G_{\kappa_2}(u_\varepsilon)]dxdt
\\&~~~~~~~~~~=
	\int_{\Omega_T}\operatorname{div}(H_{\kappa_2}(|u|)\nabla \varphi) (-\Delta)^{-s} [G_{\kappa_2}(u)]dxdt
	\\&~~~~~~~~~~=
	\int_{\Omega_T}\operatorname{div}(H_{\kappa_2}(|u|)\nabla (-\Delta)^{-s} [G_{\kappa_2}(u)])\varphi dxdt,
	\end{align*}
		for any $\varphi\in L^2(0,T, W^{1,\infty}_0(\Omega)\cap H^2(\Omega))$, since $\operatorname{div}(H_{\kappa_2}(|u_\varepsilon|)\nabla \varphi)\rightharpoonup \operatorname{div}(H_{\kappa_2}(|u|)\nabla \varphi)$ in $L^2(\Omega)$ and $(-\Delta)^{-1}\mathcal{L}_\varepsilon^{1-s} [G_{\kappa_2}(u_\varepsilon)]\to (-\Delta)^{-s} [G_{\kappa_2}(u)]$ in $L^2(\Omega).$
 Therefore, $u$ is a weak solution of problem \eqref{pro3}. The proof is complete.
\end{proof}

\begin{proposition} \label{step2*}Let $u_{\kappa_1}$ be a solution of problem \eqref{pro3} obtained in Proposition \ref{step2}. Then, there exists a subsequence of $\{u_{\kappa_1}\}$ converging to  a solution $u\in X$ of problem
	\begin{equation}\label{pro3*}
	\left\{
	\begin{array}
	[c]{l}%
	\partial_tu-\delta\Delta u-\operatorname{div}(H_{\kappa_2}(|u|)\nabla (-\Delta)^{-s} G_{\kappa_2}(u))+\varpi(-\Delta)^{s_0} (|u|^{m_0-1}u)=f~~\text{in }\Omega_T,\\
	u=0~~~~~~~~~~~~~~~~~~~~~~~~~~~~~~~~~~~~~~~~~~~~~~~~~~~~~~~~~~~~~~~~~~~~~~~~~~~~~~~~\text{on}
	~\partial\Omega\times (0,T),
	\\
	u(0)=u_0~~~~~~~~~~~~~~~~~~~~~~~~~~~~~~~~~~~~~~~~~~~~~~~~~~~~~~~~~~~~~~~~~~~~~~~~~~~\text{in}
	~~\Omega,
	\\
	\end{array}
	\right.
	\end{equation}
	as $\kappa_1\to0$. Moreover,
	\begin{align}\label{es66}
	|||u|^{m_0-1}u||_{L^2(0,T;H^{s_0}(\Omega))}\leq C,
	\end{align}
where constant $C$ does not depend on $u$ and $\kappa_2$.
\end{proposition}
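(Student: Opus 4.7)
The plan is to obtain uniform-in-$\kappa_1$ bounds on $u_{\kappa_1}$, then produce the additional $H^{s_0}$-bound by testing \eqref{pro3} against $J_{\kappa_1}(u_{\kappa_1})$ itself, extract a strongly convergent subsequence via Lemma \ref{Simonlemma}, and finally pass to the limit in the weak formulation of \eqref{pro3}.

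I would first test \eqref{pro3} against $u_{\kappa_1}$. As in Proposition \ref{step1}, the pressure term is nonnegative by Remark \ref{re1}, and the lower-order nonlocal term is nonnegative by Lemma \ref{le-Stook-Varopoulos} applied with the increasing function $\psi=J_{\kappa_1}$. Together with the $L^\infty$ estimate from Proposition \ref{step1}, this gives $\|u_{\kappa_1}\|_{L^\infty(\Omega_T)\cap L^2(0,T;H^1_0(\Omega))}\leq C$ uniformly in $\kappa_1$ (and also in $\kappa_2$), and, as in Proposition \ref{step2}, a corresponding bound on $\partial_t u_{\kappa_1}$ in an appropriate dual space.

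The crucial step is to test \eqref{pro3} against $J_{\kappa_1}(u_{\kappa_1})$. The time derivative produces the total derivative of a primitive $\widetilde{J}_{\kappa_1}$ with $|\widetilde{J}_{\kappa_1}(v)|\leq |v|^{m_0+1}/(m_0+1)$; the $-\delta\Delta$ term is nonnegative because $J'_{\kappa_1}\geq 0$; the pressure term, after one integration by parts, equals $\int R_{\kappa_1,\kappa_2}(u_{\kappa_1})(-\Delta)^{1-s}G_{\kappa_2}(u_{\kappa_1})$ with $R_{\kappa_1,\kappa_2}$ a primitive of the increasing function $H_{\kappa_2}(|\cdot|)J'_{\kappa_1}(\cdot)$, hence nonnegative by \eqref{mono2}; and the lower-order term contributes exactly $\varpi\|(-\Delta)^{s_0/2}J_{\kappa_1}(u_{\kappa_1})\|_{L^2(\Omega_T)}^2$. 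Since $|J_{\kappa_1}(v)|\leq |v|^{m_0}$, all remaining pieces are controlled via the $L^\infty$-bound uniformly in both $\kappa_1$ and $\kappa_2$, so
\[
\|J_{\kappa_1}(u_{\kappa_1})\|_{L^2(0,T;H^{s_0}(\Omega))}\leq C.
\]
By Lemma \ref{Simonlemma} we may then extract $u_{\kappa_1}\to u$ in $L^1(\Omega_T)$, which by the uniform $L^\infty$ bound upgrades to $L^p$ for all $p<\infty$ and to a.e.\ convergence. Because $J_{\kappa_1}(v)-|v|^{m_0-1}v=-|v|^{m_0-1}v\,\kappa_1/(v^2+\kappa_1)\to 0$ pointwise and is dominated by $|v|^{m_0}$, dominated convergence gives $J_{\kappa_1}(u_{\kappa_1})\to |u|^{m_0-1}u$ strongly in every $L^p$. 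Combined with the uniform $H^{s_0}$-bound this yields weak convergence in $L^2(0,T;H^{s_0}(\Omega))$ and, by lower semicontinuity of the norm, exactly \eqref{es66}. Passing to the limit in the weak formulation then proceeds as in Proposition \ref{step2}: strong convergence handles the linear terms and the $H_{\kappa_2},G_{\kappa_2}$ nonlinearities, while the strong $L^2$ convergence of $J_{\kappa_1}(u_{\kappa_1})$, combined with the symmetry of $(-\Delta)^{s_0}$ on smooth test functions, handles the lower-order nonlocal term, so the limit $u$ solves \eqref{pro3*}.

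I expect the main obstacle to be the test-function calculation in the third step, where one must \emph{simultaneously} verify that each of the four contributions produced by $J_{\kappa_1}(u_{\kappa_1})$ has the right sign and that the resulting constant is uniform not only in $\kappa_1$ but also in the auxiliary parameter $\kappa_2$. This double uniformity, essential for the subsequent limit $\kappa_2\to 0$, rests entirely on the Stroock--Varopoulos/monotonicity structure of the spectral Laplacian encoded in Lemma \ref{le-Stook-Varopoulos} and Remark \ref{re1}.
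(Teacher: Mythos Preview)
Your proposal is correct and follows essentially the same route as the paper: test \eqref{pro3} with $u_{\kappa_1}$ for the energy bounds, test with $J_{\kappa_1}(u_{\kappa_1})$ to produce the uniform $H^{s_0}$-estimate on $J_{\kappa_1}(u_{\kappa_1})$, apply Lemma \ref{Simonlemma} for compactness, and pass to the limit. The paper states the test-function step in one line; your more detailed verification of the signs of the four contributions (via \eqref{mono2} for the pressure term and Stroock--Varopoulos for the lower-order one) and of the $\kappa_2$-uniformity is exactly what is implicitly used there.
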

\begin{proof} As in Proof of Proposition \ref{step2}, we have
	\begin{align}\label{es6}
	|| u_{\kappa_1}||_{L^2(0,T;H^1_0(\Omega))}+||u_{\kappa_1}||_{L^\infty(\Omega_T)}\leq C~~\forall~~\kappa_1>0,
	\end{align}
which implies
	\begin{align*}
	||\operatorname{div}(H_{\kappa_2}(|u_{\kappa_1}|)\nabla (-\Delta)^{-s} G_{\kappa_2}(u_{\kappa_1}))||_{L^2(0,T;H^{-1}(\Omega))}\leq C.
	\end{align*}
	On the other hand, we also have
	\begin{align*}
||(-\Delta)^{s_0} J_{\kappa_1}(u_{\kappa_1})||_{L^2(0,T;(H^1_0(\Omega)\cap H^{2s_0}(\Omega))^*)}\leq C ||J_{\kappa_1}(u_{\kappa_1})||_{L^2(\Omega_T)}\leq C~~\forall~~\kappa_1>0.
	\end{align*}
	Thus,
		\begin{align*}
	||\partial_tu_{\kappa_1}-\delta\Delta u_{\kappa_1}||_{L^2(0,T;(H^1_0(\Omega)\cap H^{2s_0}(\Omega))^*)}+|| u_{\kappa_1}||_{L^2(0,T;H^1_0(\Omega))}+||u_{\kappa_1}||_{L^\infty(\Omega_T)}\leq C~~\forall~~\varepsilon\in (0,1).
	\end{align*}
	As proof of Proposition \ref{step2}, there exists a subsequence of $\{u_{\kappa_1}\}$ converging to a weak solution $u\in X$ of \eqref{pro3*} in $L^2(\Omega_T)$ as $\kappa_1\to0$. Moreover, choosing $J_{\kappa_1}(u_{\kappa_1})$ as test function in \eqref{pro3} we get
\begin{align*}
||(-\Delta)^{\frac{s_0}{2}}J_{\kappa_1}(u_{\kappa_1})||_{L^2(\Omega_T)}\leq C.
\end{align*}
Letting $\kappa_1\to 0$, we find \eqref{es66}. The proof is complete.
\end{proof}

\begin{proposition} \label{step3}  Let $u_{\kappa_2}$ be a solution of problem \eqref{pro3*} obtained in Proposition \ref{step2*}.
Then, there exists a subsequence of $\{u_{\kappa_2}\}$  converging to  a solution $u\in X$ of problem
\begin{equation}\label{pro4-}
\left\{
\begin{array}
[c]{l}%
\partial_tu-\delta\Delta u-\operatorname{div}(|u|^{m_1}\nabla (-\Delta)^{-s} |u|^{m_2-1}u)+\varpi(-\Delta)^{s_0} (|u|^{m_0-1}u)=f~~\text{in }\Omega_T,\\
u=0~~~~~~~~~~~~~~~~~~~~~~~~~~~~~~~~~~~~~~~~~~~~~~~~~~~~~~~~~~~~~~~~~~~~~~~~~~~~~~\text{on}
~\partial\Omega\times (0,T),
\\
u(0)=u_0~~~~~~~~~~~~~~~~~~~~~~~~~~~~~~~~~~~~~~~~~~~~~~~~~~~~~~~~~~~~~~~~~~~~~~~~~\text{in}
~~\Omega,
\\
\end{array}
\right.
\end{equation}
	as $\kappa_2\to0$.

\end{proposition}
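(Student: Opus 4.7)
The plan follows the scheme of Propositions~\ref{step2} and~\ref{step2*}: derive uniform bounds in $\kappa_2$, extract a subsequential limit via Lemma~\ref{Simonlemma}, and pass to the limit term-by-term in the weak formulation. The novelty is that the truncations now degenerate in the limit, $H_{\kappa_2}(|u|)\to|u|^{m_1}$ and $G_{\kappa_2}(u)\to|u|^{m_2-1}u$, so the pointwise Lipschitz control in $u$ available for $\kappa_2>0$ fixed is lost.

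Testing \eqref{pro3*} with $u_{\kappa_2}$ and using the positivity of the pressure contribution (Remark~\ref{re1}, \eqref{mono2}) together with the Stroock--Varopoulos inequality of Lemma~\ref{le-Stook-Varopoulos} applied to the $\varpi$-term, the $\delta$-viscosity yields $\|u_{\kappa_2}\|_{L^2(0,T;H^1_0(\Omega))}\le C$ uniformly in $\kappa_2$. The $L^q\to L^\infty$ iteration used at the end of Proposition~\ref{step1} (test with $|u_{\kappa_2}|^{q-2}u_{\kappa_2}$ and send $q\to\infty$) produces a uniform bound $\|u_{\kappa_2}\|_{L^\infty(\Omega_T)}\le C$. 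Since $0\le H_{\kappa_2}(|u|)\le|u|^{m_1}$ and $|G_{\kappa_2}(u)|\le|u|^{m_2}$, the coefficients are uniformly bounded in $L^\infty$, and the mapping property $(-\Delta)^{-s}:L^\infty(\Omega)\to W^{2s,q}(\Omega)$ delivers a uniform $L^2(\Omega_T)$-bound on the flux $H_{\kappa_2}(|u_{\kappa_2}|)\nabla(-\Delta)^{-s}G_{\kappa_2}(u_{\kappa_2})$. Consequently $\partial_t u_{\kappa_2}$ is uniformly bounded in $L^2(0,T;(H^1_0(\Omega)\cap H^{2s_0}(\Omega))^*)$, and Lemma~\ref{Simonlemma} produces a subsequence with $u_{\kappa_2}\to u$ in $L^1(\Omega_T)$, hence a.e. and in every $L^p(\Omega_T)$ with $p<\infty$ by interpolation with the $L^\infty$ bound; the limit $u$ belongs to $X$. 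Dominated convergence then gives $H_{\kappa_2}(|u_{\kappa_2}|)\to|u|^{m_1}$ and $G_{\kappa_2}(u_{\kappa_2})\to|u|^{m_2-1}u$ strongly in every $L^p(\Omega_T)$, and hence the strong convergence $(-\Delta)^{-s}G_{\kappa_2}(u_{\kappa_2})\to(-\Delta)^{-s}(|u|^{m_2-1}u)$ in $L^p(0,T;W^{2s,q}(\Omega))$. The dissipation term $\varpi(-\Delta)^{s_0}(|u_{\kappa_2}|^{m_0-1}u_{\kappa_2})$ passes to the limit in $L^2(0,T;(H^1_0(\Omega)\cap H^{2s_0}(\Omega))^*)$ thanks to \eqref{es66}.

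The main obstacle is the passage to the limit in the nonlocal flux. For a test function $\varphi\in C^2_c(\Omega\times[0,T))$ I would, as in Proposition~\ref{step2}, transfer one spatial derivative onto $\varphi$:
\begin{equation*}
\int_{\Omega_T} H_{\kappa_2}(|u_{\kappa_2}|)\nabla(-\Delta)^{-s}G_{\kappa_2}(u_{\kappa_2})\cdot\nabla\varphi\,dxdt = -\int_{\Omega_T}\operatorname{div}\bigl(H_{\kappa_2}(|u_{\kappa_2}|)\nabla\varphi\bigr)(-\Delta)^{-s}G_{\kappa_2}(u_{\kappa_2})\,dxdt.
\end{equation*}
The pressure $(-\Delta)^{-s}G_{\kappa_2}(u_{\kappa_2})$ is uniformly bounded in $W^{2s,q}$ and converges strongly there, while $H_{\kappa_2}(|u_{\kappa_2}|)\nabla\varphi\to|u|^{m_1}\nabla\varphi$ strongly in every $L^q$. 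The distribution $\operatorname{div}(H_{\kappa_2}(|u_{\kappa_2}|)\nabla\varphi)$ is then paired with the strongly convergent pressure through the duality between an appropriate negative-order Sobolev space and its dual, which suffices to pass to the limit even without uniform pointwise Lipschitz control on $H_{\kappa_2}$. Assembling these limits in the weak formulation shows that $u\in X$ solves \eqref{pro4-}.
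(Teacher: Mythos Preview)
Your argument is fine when $s\ge 1/2$, but it has a genuine gap when $s<1/2$. The step ``the mapping property $(-\Delta)^{-s}:L^\infty(\Omega)\to W^{2s,q}(\Omega)$ delivers a uniform $L^2(\Omega_T)$-bound on the flux'' fails: for $2s<1$ the space $W^{2s,q}$ does not control $\nabla(-\Delta)^{-s}G_{\kappa_2}(u_{\kappa_2})$, so you do not get the flux in $L^2$ from the $L^\infty$ bound alone. The same problem recurs in your limit passage: after integrating by parts you want to pair $\operatorname{div}(H_{\kappa_2}(|u_{\kappa_2}|)\nabla\varphi)\in W^{-1,q}$ against a pressure that lives only in $W^{2s,q}$; for $2s<1$ this duality is not defined, and the phrase ``an appropriate negative-order Sobolev space'' hides precisely the missing derivative.

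The paper closes this gap by using the extra fractional regularity supplied by the $\varpi$-term, namely \eqref{es66}: $\||u_{\kappa_2}|^{m_0-1}u_{\kappa_2}\|_{L^2(0,T;H^{s_0}(\Omega))}\le C$ with $s_0=1-s>1-2s$. Since $|G_{\kappa_2}(y_1)-G_{\kappa_2}(y_2)|\le C\,\bigl||y_1|^{m_0-1}y_1-|y_2|^{m_0-1}y_2\bigr|\,(|y_1|+|y_2|)^{m_2-m_0}$ and $u_{\kappa_2}$ is uniformly bounded, this yields $\|G_{\kappa_2}(u_{\kappa_2})\|_{L^2(0,T;H^{1-2s}(\Omega))}\le C$, hence $\|(-\Delta)^{1/2-s}G_{\kappa_2}(u_{\kappa_2})\|_{L^2(\Omega_T)}\le C$. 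One then writes, for $\varphi\in L^2(0,T;H^1_0(\Omega))$,
\[
\int_{\Omega_T}E_{\kappa_2}\varphi
=\int_{\Omega_T}(-\Delta)^{\frac12-s}G_{\kappa_2}(u_{\kappa_2})\,(-\Delta)^{-\frac12}\bigl[\operatorname{div}(H_{\kappa_2}(|u_{\kappa_2}|)\nabla\varphi)\bigr],
\]
and bounds the second factor in $L^2$ by $\|H_{\kappa_2}(|u_{\kappa_2}|)\nabla\varphi\|_{L^2}\le C\|\varphi\|_{L^2(0,T;H^1_0)}$ via Lemma~\ref{le-H}. This gives the uniform $L^2(0,T;H^{-1}(\Omega))$ bound on $E_{\kappa_2}$ that your argument was missing, and the same splitting (with strong $L^2$ convergence of both factors) handles the limit passage. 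In short, you use \eqref{es66} only to pass to the limit in the $\varpi$-term; for $s<1/2$ it is essential also for the flux.
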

\begin{proof}  We have
\begin{align}\label{es67}|| u_{\kappa_2}||_{L^2(0,T;H^1_0(\Omega))}+||u_{\kappa_2}||_{L^\infty(\Omega_T)}+|||u_{\kappa_2}|^{m_0-1}
u_{\kappa_2}||_{L^2(0,T;H^{s_0}(\Omega))}\leq C~~\forall~~\kappa_2>0.
\end{align}
We will prove that
\begin{align}\label{es9}
\sup_{\kappa_2}||E_{\kappa_2}||_{L^2(0,T;H^{-1}(\Omega))} \leq C,
\end{align}
where $
E_{\kappa_2}:=\operatorname{div}(H_{\kappa_2}(|u_{\kappa_2}|)\nabla (-\Delta)^{-s}G_{\kappa_2}(u_{\kappa_2})).$
It is easy to prove \eqref{es9} in case $s\in [\frac{1}{2},1)$. So now we only consider case $s\in (0,\frac{1}{2})$.
We have for $\varphi\in L^2(0,T,H^1_0(\Omega))$,
\begin{align}\label{es68}
|\int_{\Omega_T}E_{\kappa_2}\varphi dxdt|&=|\int_{\Omega_T}(-\Delta)^{\frac{1}{2}-s}G_{\kappa_2}(u_{\kappa_2})(-\Delta)^{-\frac{1}{2}}[\operatorname{div}(H_{\kappa_2}(|u_{\kappa_2}|)\nabla\varphi)] dxdt|\\& \nonumber\leq C||(-\Delta)^{\frac{1}{2}-s}G_{\kappa_2}(u_{\kappa_2})||_{L^2(\Omega_T)}||(-\Delta)^{-\frac{1}{2}}[\operatorname{div}(H_{\kappa_2}(|u_{\kappa_2}|)\nabla\varphi)]||_{L^2(\Omega_T)}
\end{align}
By \eqref{es49} in Lemma \ref{le-H},
\begin{align}\label{es69}
||(-\Delta)^{-\frac{1}{2}}[\operatorname{div}(H_{\kappa_2}(|u_{\kappa_2}|)\nabla\varphi)]||_{L^2(\Omega_T)} &\leq C ||H_{\kappa_2}(|u_{\kappa_2}|)\nabla\varphi||_{L^2(\Omega_T)}\leq C ||\varphi||_{L^2(0,T;H^1_0(\Omega))}.
\end{align}
Since   \begin{align*}
|G_{\kappa_2}(y_1)-G_{\kappa_2}(y_2)|\leq C||y_1|^{m_0-1}y_1-|y_2|^{m_0-1}y_2| (|y_1|+|y_2|)^{m_2-m_0},
\end{align*} we have
\begin{align*}
&||(-\Delta)^{\frac{1}{2}-s}G_{\kappa_2}(u_{\kappa_2})||_{L^2(\Omega_T)}^2\\&~~~\leq C ||G_{\kappa_2}(u_{\kappa_2})||_{L^2(\Omega_T)}+C\int_{0}^{T}\int_{\Omega}\int_\Omega\frac{|G_{\kappa_2}(u_{\kappa_2})(x) -G_{\kappa_2}(u_{\kappa_2})(y) |^2}{|x-y|^{N+2(1-2s)}}dxdydt\\
&~~~\nonumber\leq C+C\int_{0}^{T}\int_{\Omega}\int_\Omega\frac{||u_{\kappa_2}(x)|^{m_0-1}u_{\kappa_2}(x) -|u_{\kappa_2}(y)|^{m_0-1}u_{\kappa_2}(y) |^2}{|x-y|^{N+2(1-2s)}}dxdydt
\\&~~~\leq C+C|||u_{\kappa_2}|^{m_0-1}u_{\kappa_2}||_{L^2(0,T;H^{s_0}(\Omega))}^2\\&~~~\leq C.
\end{align*}
Combining this with \eqref{es69} and \eqref{es68}, we get \eqref{es9}.\\
Hence, from \eqref{es9} and \eqref{es67} we have
	\begin{align*}
	||\partial_tu_{\kappa_2}-\delta\Delta u_{\kappa_2}||_{L^2(0,T;(H^1_0(\Omega)\cap H^{2s_0}(\Omega))^*)}+|| u_{\kappa_2}||_{L^2(0,T;H^1_0(\Omega))}+||u_{\kappa_2}||_{L^\infty(\Omega_T)}\leq C~\forall~\kappa_2\in (0,1).
	\end{align*}
By Lemma \ref{Simonlemma}, there exists a subsequence of $\{u_{\kappa_2}\}$ converging to $u$ in $L^1(\Omega_T)$ as $\kappa_2\to0$. Moreover, we also have $u\in X$ and
$\lim\limits_{\kappa_2\to 0}(-\Delta)^{\frac{1}{2}-s}G_{\kappa_2}(u_{\kappa_2})=(-\Delta)^{\frac{1}{2}-s}(|u|^{m_2-1}u)$, $\lim\limits_{\kappa_2\to 0} (-\Delta)^{-\frac{1}{2}}[\operatorname{div}(H_{\kappa_2}(|u_{\kappa_2}|)\nabla\varphi)]=(-\Delta)^{-\frac{1}{2}}[\operatorname{div}(|u|^{m_1})\nabla\varphi)]$~in $L^2(\Omega_T)$.

Therefore, it is easy to check that $u$ is a solution of problem \eqref{pro4-}.
\end{proof}

\begin{proof}[Proof of Proposition \ref{step4}]  Let $u_{\varpi}$ be a solution of problem \eqref{pro4-} obtained in Proposition \ref{step3}. We need to show that there exists a subsequence of $\{u_{\varpi}\}$  converging to  a solution $u\in X$ of problem \eqref{pro4}
	as $\varpi\to0$. \\ Indeed,  choosing $(|u_{\varpi}|+\eta)^{\theta-1}u_{\varpi}$ with $\theta>0$  as a test function of \eqref{pro4-},
	\begin{align*}&
	\int_{\Omega_T} |u_{\varpi}|^{m_1}\nabla (-\Delta)^{-s} (|u_{\varpi}|^{m_2-1}u_{\varpi})\nabla ((|u_{\varpi}|+\eta)^{\theta-1}u_{\varpi})\\&~~+\varpi\int_{\Omega_T} (-\Delta)^{s_0}(|u_\varpi|^{m_0-1}u_{\varpi})((|u_{\varpi}|+\eta)^{\theta-1}u_{\varpi})\leq C
	\end{align*}
	which implies
	\begin{align*}
	\int_{\Omega_T}\Gamma_\eta (v_{\varpi})(-\Delta)^{1-s}(v_{\varpi}) ++\varpi\int_{\Omega_T} (-\Delta)^{s_0}(|u_\varpi|^{m_0-1}u_{\varpi})((|u_{\varpi}|+\eta)^{\theta-1}u_{\varpi})\leq C,
	\end{align*}
	where $v_\varpi=|u_{\varpi}|^{m_2-1}u_{\varpi}$ and  $\Gamma_\eta(a)=\int_{0}^{|a|^{\frac{1}{m_2}-1}a}|b|^{m_1}(|b|+\eta)^{\theta-2}(\theta|b|+\eta)db$.\\
	By Lemma  \ref{le-Stook-Varopoulos}  and then letting $\eta\to 0$, we get
	\begin{align}\label{es78}
	\int_{\Omega_T}|(-\Delta)^{\frac{1-s}{2}}\left(|u_\varpi|^{\frac{\gamma+\theta}{2}-1}u_\varpi\right)|^2+
\varpi\int_{\Omega_T}|(-\Delta)^{\frac{s_0}{2}}(|u_\varpi|^{\frac{m_0+\theta}{2}-1}u_{\varpi})|^2\leq C,
	\end{align}
	with $\gamma=m_1+m_2$.
Thus, for any $\theta\in (0,1)$
	\begin{align*}
	&|| u_\varpi||_{L^2(0,T;H^1_0(\Omega))}+||u_\varpi||_{L^\infty(\Omega_T)}+|||u_\varpi|^{\frac{\gamma+\theta}{2}-1}u_\varpi||_{L^2(0,T;H^{1-s}(\Omega))}\leq C~~\forall~~\varpi>0.
	\end{align*}
	By Lemma \ref{le-f} below, we have
	\begin{align*}
	&||\operatorname{div}(|u_\varpi|^{m_1}\nabla (-\Delta)^{-s}(|u_\varpi|^{m_2-1}u_\varpi)||_{L^2(0,T;(H^1_0(\Omega)\cap W^{2-\vartheta,r}(\Omega)))^{*}}\\~~~~~~&\leq C \left(\int_{0}^{T}|||u_\varpi|^{\gamma-1}u_\varpi||^2_{H^{(1-2s)^+}(\Omega)} dt\right)^{1/2}\\&\leq C,
	\end{align*}
	for some $r>1,\vartheta\in (0,1)$.
Hence,
\begin{align*}
||\partial_tu_{\varpi}-\delta\Delta u_{\varpi}||_{L^2(0,T;(H^1_0(\Omega)\cap W^{2-\vartheta,r}(\Omega)))^{*}}+|| u_{\varpi}||_{L^2(0,T;H^1_0(\Omega))}+||u_{\varpi}||_{L^\infty(\Omega_T)}\leq C~\forall~\varpi\in (0,1).
\end{align*}
for some $r>1,\vartheta\in (0,1)$.
By Lemma \ref{Simonlemma}, there exists a subsequence of $\{u_{\varpi}\}$ converging to $u$ in $L^1(\Omega_T)$ as $\varpi\to0$. Moreover, we have for $\varphi\in H^1_0(\Omega)$
\begin{align*}
\left|\varpi\int_{\Omega_T}(-\Delta)^{s_0}(|u_\varpi|^{m_0-1}u_{\varpi})\varphi\right|&\leq C \varpi ||(-\Delta)^{\frac{s_0}{2}}(|u_\varpi|^{m_0-1}u_{\varpi})||_{L^2(\Omega_T)} ||(-\Delta)^{\frac{s_0}{2}}\varphi||_{L^2(\Omega_T)}\\&\overset{\eqref{es78}}\leq C\sqrt{\varpi}||\varphi||_{H^1_0(\Omega)}\to 0~~\text{as}~~\varpi\to 0.
\end{align*}
 Therefore, it is easy to check that $u$ is a solution of problem \eqref{pro4} and belongs to $X$. The proof is complete.
\end{proof}
In proof of Proposition \ref{step4}, we have used the following  basic lemma.
\begin{lemma} \label{le-f}  There exists $\vartheta=\vartheta(s,m_1,m_2)\in (0,1/2)$ and $r=r(s,m_1,m_2,N)\in (2,\infty)$ such that
	\begin{align}\label{es32*}
	||\operatorname{div}(|v|^{m_1}\nabla (-\Delta)^{-s}(|v|^{m_2-1}v)||_{(H^1_0(\Omega)\cap W^{2-\vartheta,r}(\Omega))^{*}}\leq C |||v|^{\gamma-1}v||_{H^{(1-2s)^+}(\Omega)}
	\end{align}
	for all $|v|^{\gamma-1}v\in H^{(1-2s)^+}(\Omega).$
\end{lemma}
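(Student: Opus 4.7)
The plan is by duality: for a test function $\varphi\in H^1_0(\Omega)\cap W^{2-\vartheta,r}(\Omega)$, integration by parts yields
\[
\bigl\langle\operatorname{div}(|v|^{m_1}\nabla(-\Delta)^{-s}(|v|^{m_2-1}v)),\varphi\bigr\rangle \;=\; -\int_\Omega |v|^{m_1}\nabla(-\Delta)^{-s}(|v|^{m_2-1}v)\cdot\nabla\varphi\,dx,
\]
and the goal is to bound the right-hand side by $C\,\||v|^{\gamma-1}v\|_{H^{(1-2s)^+}(\Omega)}\,\|\varphi\|_{H^1_0\cap W^{2-\vartheta,r}}$. The crucial combinatorial observation is $|v|^{m_1}\cdot|v|^{m_2-1}v=|v|^{\gamma-1}v$, so the argument must effectively merge the outer factor $|v|^{m_1}$ with the inner $|v|^{m_2-1}v$ via a H\"older or fractional-Leibniz decomposition.

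I would split into two regimes by the sign of $1-2s$. When $s\ge 1/2$, $\nabla(-\Delta)^{-s}$ has non-positive order (a Riesz-type transform at $s=1/2$, smoothing for $s>1/2$) and is therefore bounded $L^q\to L^q$ for every $1<q<\infty$ on the bounded domain $\Omega$. Applying H\"older with $1/p_1+1/p_2=1/2$ and $p_1m_1=p_2m_2=2\gamma$ gives
\[
\bigl\||v|^{m_1}\nabla(-\Delta)^{-s}(|v|^{m_2-1}v)\bigr\|_{L^2}\;\lesssim\;\|v\|_{L^{2\gamma}}^{m_1}\,\|v\|_{L^{2\gamma}}^{m_2}\;=\;\||v|^{\gamma-1}v\|_{L^2},
\]
which coincides with $\||v|^{\gamma-1}v\|_{H^{(1-2s)^+}}$ since $(1-2s)^+=0$; pairing with $\|\nabla\varphi\|_{L^2}\le\|\varphi\|_{H^1_0}$ closes this case, with even the stronger $H^{-1}$ bound as a by-product. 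When $s<1/2$, the operator $\nabla(-\Delta)^{-s}$ has positive order $1-2s$ and only maps $W^{1-2s,q}\to L^q$, so pure H\"older no longer suffices; I would invoke a fractional Leibniz (Kato--Ponce) argument together with Sobolev embeddings to obtain
\[
\bigl\||v|^{m_1}\nabla(-\Delta)^{-s}(|v|^{m_2-1}v)\bigr\|_{L^p}\;\lesssim\;\||v|^{\gamma-1}v\|_{W^{1-2s,p}}
\]
for a suitable $p\in(1,2]$; taking $p=2$ matches the target $H^{1-2s}=H^{(1-2s)^+}$, and the dual exponent $p'\ge 2$ on $\nabla\varphi$ is absorbed through the embedding $W^{2-\vartheta,r}\hookrightarrow W^{1,p'}$ that is valid for $\vartheta\in(0,1/2)$ and $r>2$ sufficiently large.

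The main obstacle lies in the regime $s<1/2$. Individually, the nonlinear factors $|v|^{m_1}$ and $|v|^{m_2-1}v$ are H\"older-type compositions of $|v|^{\gamma-1}v$ with exponents $m_1/\gamma$ and $m_2/\gamma$, so neither need inherit the full $H^{1-2s}$ regularity that their product enjoys. The Leibniz step must therefore be applied to the full product, or equivalently realized through a commutator identity for $[\nabla(-\Delta)^{-s},|v|^{m_1}]$, all while respecting the spectral Dirichlet boundary conditions on $\partial\Omega$. The parameters $\vartheta\in(0,1/2)$ and $r\in(2,\infty)$ provide precisely the slack needed to match Sobolev exponents in the various embeddings and to legitimize the integrations by parts in this spectral setting, where one uses crucially that $(-\Delta)^{-s}(|v|^{m_2-1}v)$ vanishes on $\partial\Omega$ in the spectral sense.
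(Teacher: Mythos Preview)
Your treatment of the case $s\ge 1/2$ is correct and matches the paper's. The gap is in the case $s<1/2$: the displayed target estimate
\[
\bigl\||v|^{m_1}\nabla(-\Delta)^{-s}(|v|^{m_2-1}v)\bigr\|_{L^p}\;\lesssim\;\||v|^{\gamma-1}v\|_{W^{1-2s,p}}
\]
is not what a Kato--Ponce inequality delivers. Fractional Leibniz bounds $D^\sigma(fg)$ in terms of $D^\sigma f$, $D^\sigma g$ and lower-order pieces; you are asking for the reverse direction, bounding $f\cdot D^\sigma g$ by $D^\sigma(fg)$. The commutator rewriting $f\,D^\sigma g=D^\sigma(fg)-[D^\sigma,f]g$ does not help either, since the standard commutator bounds require control of $\nabla f=\nabla(|v|^{m_1})$, which you do not have from $|v|^{\gamma-1}v\in H^{1-2s}$. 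Your final paragraph correctly names the obstacle but does not resolve it.

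The paper's proof avoids estimating the vector field in any $L^p$ and instead works at the level of the pairing. One writes, for a test function $\varphi$,
\[
\int_\Omega |v|^{m_1}\nabla(-\Delta)^{-s}(|v|^{m_2-1}v)\cdot\nabla\varphi
=\int_\Omega (-\Delta)^{\beta-s}(|v|^{m_2-1}v)\,(-\Delta)^{-\beta}\operatorname{div}\bigl(|v|^{m_1}\nabla\varphi\bigr),
\]
i.e.\ one shifts the factor $|v|^{m_1}$ onto $\nabla\varphi$ before splitting the operator. The exponent $\beta\in(s,1/2)$ is chosen so that $1-2\beta=\frac{m_1}{\gamma}(1-2s)$ and $2(\beta-s)=\frac{m_2}{\gamma}(1-2s)$. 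The point you are missing is that each sub-power \emph{does} inherit a proportional share of the regularity: from the elementary inequality $||y_1|^{a-1}y_1-|y_2|^{a-1}y_2|\gtrsim||y_1|^{ab-1}y_1-|y_2|^{ab-1}y_2|^{1/b}$ one gets
\[
\||v|^{m_1}\|_{H^{1-2\beta}}\lesssim\||v|^{\gamma-1}v\|_{H^{1-2s}}^{m_1/\gamma},
\qquad
\||v|^{m_2-1}v\|_{H^{2(\beta-s)}}\lesssim\||v|^{\gamma-1}v\|_{H^{1-2s}}^{m_2/\gamma}.
\]
Combining this with $\|(-\Delta)^{-\beta}\operatorname{div}(g)\|_{L^2}\lesssim\|g\|_{H^{1-2\beta}}$ and the product bound $\||v|^{m_1}\nabla\varphi\|_{H^{1-2\beta}}\lesssim\||v|^{m_1}\|_{H^{1-2\beta}}\|\varphi\|_{W^{2-2\beta,\infty}}$ closes the estimate, with $\vartheta=2\beta$ and $r$ large. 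So the missing idea is precisely this splitting of the fractional order according to the exponents $m_1,m_2$, executed on the dual side rather than on the vector field itself.
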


\begin{proof}[Proof of Lemma \ref{le-f} ] It is easy to prove  \eqref{es32*} in case $s\in [\frac{1}{2},1)$. Thus, we only consider case $s\in (0,\frac{1}{2})$. Let $\beta\in (s,1/2)$ be such that
	\begin{align}\label{es31}
	\frac{(1-2s)m_1}{\gamma}=1-2\beta,~~~
	\frac{(1-2s)m_2}{\gamma}=2(\beta-s).
	\end{align}
	Since for $a>0$ and $b\in (0,1)$
	\begin{align*}
	||y_1|^{a-1}y_1-|y_2|^{a-1}y_2|\geq C ||y_1|^{ab-1}y_1-|y_1|^{ab-1}y_1|^{\frac{1}{b}} ~\forall~y_1,y_2\in \mathbb{R},
	\end{align*}
	thus,
\begin{align}\label{es76}
|||u|^{m_1-1}u||_{H^{1-2\beta}(\Omega)}&\leq C |||u|^{m_1-1}u||_{W^{1-2\beta,\frac{2\gamma}{m_1}}(\Omega)}\leq  C |||u|^{\gamma-1}u||_{H^{1-2s}(\Omega)}^{\frac{m_1}{\gamma}},
\end{align}
and
\begin{align}\label{es77}
|||u|^{m_2-1}u||_{H^{2(\beta-s)}(\Omega)}&\leq  C |||u|^{\gamma-1}u||_{H^{1-2s}(\Omega)}^{\frac{m_2}{\gamma}}.
\end{align}
Therefore, for $\varphi\in H^1_0(\Omega)\cap W^{2,\infty}(\Omega)$,
	\begin{align*}
	&\left|\int_{\Omega} \operatorname{div}(|u|^{m_1}\nabla (-\Delta)^{-s}(|u|^{m_2-1}u)\varphi\right|\\&=\left|\int_{\Omega}(-\Delta)^{\beta-s}(|u|^{m_2-1}u)(-\Delta)^{-\beta}\left[\operatorname{div}(|u|^{m_1}\nabla \varphi)\right] dx\right|\\&\leq ||(-\Delta)^{\beta-s}(|u|^{m_2-1}u)||_{L^2(\Omega)} ||(-\Delta)^{-\beta}\left[\operatorname{div}(|u|^{m_1}\nabla \varphi)\right]||_{L^2(\Omega)}
	\\&\overset{\eqref{es49} ~ \text{in Lemma}~ \ref{le-H}}\leq ||(-\Delta)^{\beta-s}(|u|^{m_2-1}u)||_{L^2(\Omega)} |||u|^{m_1}\nabla \varphi||_{H^{1-2\beta}(\Omega)}\\&
	\leq C |||u|^{m_2-1}u||_{H^{2(\beta-s)}(\Omega)}|||u|^{m_1}||_{H^{1-2\beta}(\Omega)}||\varphi||_{W^{2-2\beta,\infty}(\Omega)}
	\\&
	\leq C  |||u|^{\gamma-1}u||_{H^{1-2s}(\Omega)}||\varphi||_{W^{2-2\beta,\infty}(\Omega)}
	\end{align*}
	which implies \eqref{es32*}. The proof is complete.
\end{proof}

\section{Existence of weak solutions via approximation}\label{sec.proof.th1}
In this section, we prove Theorem \ref{mainthm} by using the approximate problems of the preceding section.  Let $u_\delta=u \in L^\infty(\Omega_T)\cap C(0,T;L^r(\Omega))\cap L^2(0,T;H^1_0(\Omega))$  be a solution of \eqref{pro4} for all $r<\infty$. Set $M=||u_0||_{L^1(\Omega)}+||f||_{L^1(\Omega_T)}$.  The proof of the theorem will be obtained from Lemma \ref{le-1}, \ref{le-2}, \ref{le-3}, \ref{le-4},\ref{le-5} and \ref{le-6} with $u=u_{\delta}$. The complete proof is at Lemma \ref{le-7}.

 \smallskip
 \begin{lemma}[Estimates for $L^1$-data] \label{le-1} There hold,
 	\begin{align}\label{main-es1}
 	||u||_{L^\infty(0,T,L^1(\Omega))}\leq ||u_0||_{L^1(\Omega)}+||f||_{L^1(\Omega_T)},
 	\end{align}
 	and
 	\begin{align}\label{main-es1"}
 	||u^\pm||_{L^\infty(0,T,L^1(\Omega))}\leq ||u_0^\pm||_{L^1(\Omega)}+||f^\pm||_{L^1(\Omega_T)}.
 	\end{align}
 	In particular, if $u_0,f\geq 0$, then $u\geq 0$,
 	\begin{align} \label{main-es1'}
 	&1_{s<1-\frac{N}{2}}||u||_{L^{\gamma+\frac{2(1-s)}{N},\infty}(\Omega_T)}+1_{s=1-\frac{N}{2}}||u||_{L^{\gamma+1-\frac{1}{l},\infty}(\Omega_T)}+1_{s>1-\frac{N}{2}}||u||_{L^{\gamma+1,\infty}(\Omega_T)}\\&\nonumber\leq C 1_{s<1-\frac{N}{2}}M^{\frac{N+2(1-s)}{\gamma N-2(1-s)}}+C1_{s=1-\frac{N}{2}}M^{\frac{2l}{l(\gamma+1)-1}}+C1_{s>1-\frac{N}{2}}M^{\frac{2}{\gamma+1}},
 	\end{align}
 	for all $l>1$,
 	and
 	\begin{align}\label{es45}
 	\int_{0}^T\int_\Omega|(-\Delta)^{\frac{1-s}{2}}\left(\frac{|u|^{\frac{m_1+m_2}{2}+\theta-1}u}{|u|^{2\theta}+1}\right)|^2dxdt\leq CM~~ \forall~~\theta>0.
 	\end{align}
 \end{lemma}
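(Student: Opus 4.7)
My approach is to test the regularized equation \eqref{pro4} against carefully chosen functions of $u$ and exploit the positivity and monotonicity results of Section~\ref{sec.apprx} (the C\'ordoba-C\'ordoba and Stroock-Varopoulos inequalities, together with Remark~\ref{re1}) to derive the three estimates in succession. Throughout, the term $\delta\Delta u$ provides enough smoothness to justify all integrations by parts and contributes only non-negatively.

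For the $L^1$ bounds \eqref{main-es1} and \eqref{main-es1"}, I would approximate $\operatorname{sign}_+$ by a smooth non-decreasing function $\phi_\eta$ vanishing on $(-\infty,0]$, with primitive $\Phi_\eta$ satisfying $\Phi_\eta(0)=0$, and multiply \eqref{pro4} by $\phi_\eta(u)$. The local term yields $\delta\int \phi_\eta'(u)|\nabla u|^2\,dx\ge 0$. Two integrations by parts turn the nonlocal term into
\[
-\int \operatorname{div}\bigl(|u|^{m_1}\nabla(-\Delta)^{-s}(|u|^{m_2-1}u)\bigr)\phi_\eta(u)\,dx=\int K_\eta(u)\,(-\Delta)^{1-s}(|u|^{m_2-1}u)\,dx,
\]
where $K_\eta'=|u|^{m_1}\phi_\eta'\ge 0$ and $K_\eta(0)=0$. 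Since $K_\eta$ and $G(u)=|u|^{m_2-1}u$ are both non-decreasing, we may write $K_\eta=H\circ G$ with $H$ non-decreasing, and then \eqref{mono2} in Remark~\ref{re1} makes this integral non-negative. Hence $\frac{d}{dt}\int \Phi_\eta(u)\,dx \le \int f\,\phi_\eta(u)\,dx \le \int f^+\,dx$; letting $\eta\to 0$ and integrating in time gives the $u^+$ bound, the same argument applied to $-u$ gives the $u^-$ bound, and summing yields \eqref{main-es1}.

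For the weighted estimate \eqref{es45}, I would choose an odd, non-decreasing, bounded test function $\psi_\theta$ such that, after the same two integrations by parts, the nonlocal term is bounded below by $\int|(-\Delta)^{(1-s)/2}\Psi_\theta(u)|^2\,dx$ with $\Psi_\theta(u)=|u|^{\gamma/2+\theta-1}u/(|u|^{2\theta}+1)$. The function $\psi_\theta$ is determined by requiring that $(\Psi_\theta'(u))^2=\psi_\theta'(u)\,G'(u)\,|u|^{m_1}$, and the desired lower bound then follows from Lemma~\ref{le-Stook-Varopoulos} applied with $v=G(u)$. Because $\psi_\theta$ is uniformly bounded on $\mathbb{R}$, the contributions of $\partial_t u$, $-\delta\Delta u$ and $f$ are each controlled by $C\,M$, yielding \eqref{es45}.

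The Marcinkiewicz estimate \eqref{main-es1'} is the most technical step, and I would follow the truncation method of \cite{Veron-Nguyen}. For $k>0$, test \eqref{pro4} against a truncated power of $u$ (chosen compatibly with Stroock-Varopoulos) to derive an energy inequality of the form $\int_0^T\!\int_\Omega |(-\Delta)^{(1-s)/2} \widetilde u_k|^2\,dx\,dt \le C\,k\,M$ for an appropriate truncation $\widetilde u_k$. Combine this with the Sobolev embedding of $H^{1-s}(\Omega)$---into $L^{2N/(N-2(1-s))}$ when $N>2(1-s)$, into every $L^q$ with $q<\infty$ when $N=2(1-s)$, and into $L^\infty$ when $N<2(1-s)$---and with the $L^1$ bound above. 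Then a distribution-function argument on the super-level sets $\{|u|>k\}$, followed by optimization in $k$, recovers the three Marcinkiewicz exponents in \eqref{main-es1'}. The principal obstacle is the choice of truncated test function: it must simultaneously be admissible under Stroock-Varopoulos and produce sharp control of the super-level sets, and the resulting distribution-function inequality must be optimized separately in each of the three regimes. The borderline case $s=1-N/2$ requires an $\varepsilon$-loss in the Sobolev embedding, which accounts for the auxiliary parameter $l>1$ in the corresponding exponent.
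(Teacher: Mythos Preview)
Your approach is correct and, in places, cleaner than the paper's. For the $L^1$ bounds \eqref{main-es1}--\eqref{main-es1"} you and the paper do the same thing: the paper tests against $T_k(u)=\min\{|u|,k\}\operatorname{sgn}(u)$ and sends $k\to0$, you test against a smoothed $\operatorname{sign}_+$ and send the smoothing to zero; in both cases the nonlocal term is handled by the monotonicity inequality \eqref{mono2}.

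For \eqref{es45} the routes genuinely diverge. The paper does \emph{not} apply Stroock--Varopoulos directly: it first works in the Caffarelli--Silvestre extension to obtain the localized energy bound \eqref{es41}, then performs a dyadic decomposition $\psi_j(v)=[\chi(2^{-j}v)-\chi(2^{-j+1}v)]\cdot(\ldots)$, controls each piece via \eqref{es41}, and sums in $j$. Your single-test-function argument is simpler and avoids both the extension and the dyadic sum; the key observation you make---that the $\psi_\theta$ determined by $(\Psi_\theta')^2=\psi_\theta'\,|u|^{m_1}G'(u)$ is bounded (since $\psi_\theta'\sim|u|^{2\theta-1}$ near $0$ and $\sim|u|^{-2\theta-1}$ at infinity)---is exactly what makes the direct route go through. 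The paper's decomposition is more robust if one later wants finer frequency-localized information, but for the estimate as stated your argument suffices.

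For \eqref{main-es1'} the strategies are again parallel but implemented differently. The paper obtains the key level-set estimate \eqref{es8} by passing through the extension: testing with $T_k(u)$ gives \eqref{es41}, then a comparison-of-extensions argument yields \eqref{es44} for the cutoff $\eta_k(|u|^{m_2-1}u)$, and Sobolev on $\eta_k$ produces \eqref{es8}. Your proposal---test against $T_k(u)$, apply Stroock--Varopoulos to get $\int_0^T\!\int_\Omega|(-\Delta)^{(1-s)/2}\widetilde u_k|^2\le CkM$ with $\widetilde u_k$ a truncation of $|u|^{(\gamma+1)/2-1}u$, then use Sobolev directly on $\widetilde u_k$---reaches the same \eqref{es8} without the extension comparison. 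After that point both proofs combine \eqref{es8} with the $L^\infty_tL^1_x$ bound via H\"older on the indicator $1_{|u|\ge k}$, splitting into the three regimes $s\gtrless 1-N/2$; no separate optimization in $k$ is needed, the level-set bound already \emph{is} the weak-$L^p$ estimate.
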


\smallskip

\noindent {\sl Proof.}
 Choosing $T_k(u):=\min\{|u|,k\}\text{sgn}(u)$ as test function of \eqref{pro4},
\begin{align}\label{es7}
 ||\overline{T}_k(u)||_{L^\infty(0,T,L^1(\Omega))}+ \int_{\Omega_T}|u|^{m_1}\nabla (-\Delta)^{-s}(|u|^{m_2-1}u)\nabla T_k(u)\leq kM,
\end{align}
with $\overline{T}_k(u)=\int_{0}^{u}T_k(a)da.$
Since $\lim\limits_{k\to 0}\overline{T}_k(u) k^{-1}=u$, we get \eqref{main-es1}. Similarly,  choosing $T_k(u)^+:=\min\{|u|,k\}1_{u\geq 0}$ as test function of Problem \eqref{pro4} then we will get
\begin{align}\label{main-es1''}
||u^+||_{L^\infty(0,T,L^1(\Omega))}\leq ||u_0^+||_{L^1(\Omega)}+||f^+||_{L^1(\Omega_T)},
\end{align}
which implies \eqref{main-es1"}. In particular, if $u_0,f\geq 0$, then $u\geq 0$.

\textbf{1.  Proof of \eqref{main-es1'}.} First, we prove that
\begin{align}
\label{es44}\int_{0}^{T}\int_\Omega |(-\Delta)^{\frac{1-s}{2}}(\eta_k(|u|^{m_2-1}u))|^2dxdt\leq  Ck^{\frac{m_2-m_1}{m_2}} M~~\forall~~k>0,
\end{align}
where $\eta_k(s)=k\eta(s/k)$,   $\eta$ is  a smooth function in $\mathbb{R}$ such that $\eta(s)=0$ if $|s|\leq 1/2$, $|\eta'(s)|=1$ if $1\leq |s|\leq 2$ and $|\eta(s)|=3$ if $|s|> 3$.

Set $v=|u|^{m_2-1}u$,
we have from \eqref{es7} that
\begin{align}
 \int_{\Omega_T}(-\Delta)^{1-s}(v) (|T_k(|v|^{\frac{1}{m_2}-1}v)|^{m_1}T_k(|v|^{\frac{1}{m_2}-1}v))\leq C kM~~\forall k>0.
\end{align}
It is equivalent to
\begin{align}\label{es35}
\int_{\Omega_T}(-\Delta)^{1-s}(v) T_k(|v|^{\frac{m_1+1-m_2}{m_2}}v)\leq C k^{\frac{1}{m_1+1}}M~~\forall ~~k>0.
\end{align}
Let $V(.)=V(t,.)$ be a unique solution of the extended problem
	\begin{equation*}
\left\{
\begin{array}
{ll}%
\operatorname{div}_{x,y}(y^{1-2(1-s)}\nabla_{x,y} V)=0~~&\text{in }\Omega\times(0,\infty),\\
V=0 &\text{on}
~~\partial\Omega\times (0,\infty),
\\
V(x,0)=v(x) &\text{in}
~~\Omega,
\\
\end{array}
\right.
\end{equation*}
For the equivalence of this problem with the original problem with the spectral Laplacian see for instance \cite{Cabre-Tan, pqrv1, pqrv2}.
We have
\begin{align}\label{es38}
\int_{\Omega}\int_0^\infty y^{1-2(1-s)}\nabla_{x,y} V\nabla_{x,y} \varphi dydx=c_s\int_{\Omega}(-\Delta)^{1-s}(v) \varphi(0)dx,\end{align}
for any $\varphi\in H^1_0(\Omega\times (0,\infty),d\omega)$ with $d\omega= y^{1-2(1-s)}dydx$.

From this and \eqref{es35} we deduce for all $k>0$
\begin{align*}
\int_{0}^{T}\int_{\Omega}\int_0^\infty y^{1-2(1-s)}\nabla_{x,y} V\nabla T_k(|V|^{\frac{m_1+1-m_2}{m_2}}V) dydxdt\leq Ck^{\frac{1}{m_1+1}}M~\forall~k>0.\end{align*}
So,
\begin{align}\label{es41}
\int_{0}^{T}\int_{\Omega}\int_0^\infty 1_{k\leq |V|\leq 2k}y^{1-2(1-s)}|\nabla_{x,y} V |^2dydx dt\leq Ck^{\frac{m_2-m_1}{m_2}} M~~\forall~~k>0.\end{align}

Let $W(.)=W(t,.)$ be a unique solution of the extended problem
\begin{equation*}
\left\{
\begin{array}
{ll}%
\operatorname{div}_{x,y}(y^{1-2(1-s)}\nabla_{x,y} W)=0~~&\text{in }\Omega\times(0,\infty),\\
W=0 &\text{on}
~~\partial\Omega\times (0,\infty),
\\
W(x,0)=\eta_k(v(x)) &\text{in}
~~\Omega,
\\
\end{array}
\right.
\end{equation*}

Since $\operatorname{div}_{x,y}(y^{1-2(1-s)}\nabla_{x,y} \eta_k(V))=\eta_k''(V)y^{1-2(1-s)}|\nabla_{x,y}V|^2$,
\begin{align*}
&\int_\Omega\int_{0}^{\infty}y^{1-2(1-s)}|\nabla_{x,y}W|^2dydx\\&=\int_\Omega\int_{0}^{\infty}y^{1-2(1-s)}\nabla_{x,y} \eta_k(V)\nabla_{x,y}Wdydx+\int_\Omega\int_{0}^{\infty}\eta_k''(V)y^{1-2(1-s)}|\nabla_{x,y}V|^2 Wdydx.
\end{align*}
Using H\"older's inequality and the fact that $|W|\leq ||W(.,0)||_{L^\infty(\Omega)}\leq k||\eta||_{L^\infty(\Omega)}$ yields
\begin{align*}
\int_{0}^{T}\int_\Omega\int_{0}^{\infty}y^{1-2(1-s)}|\nabla_{x,y}W|^2dydx&\leq C\int_{0}^{T}\int_\Omega\int_{0}^{\infty}1_{k/2\leq v\leq 3k}y^{1-2(1-s)}|\nabla_{x,y} V|^2dydx\\&\overset{\eqref{es41}}\leq Ck^{\frac{m_2-m_1}{m_2}} M.
\end{align*}
From this and
\begin{align*}
\int_{0}^{T}\int_\Omega\int_{0}^{\infty}y^{1-2(1-s)}|\nabla_{x,y}W|^2dydx=c_s\int_{0}^{T}\int_\Omega |(-\Delta)^{\frac{1-s}{2}}(\eta_k(v))|^2dxdt,
\end{align*}  we find \eqref{es44}. By \eqref{L^p-b-frac1}, \eqref{L^p-b-frac2}, \eqref{L^p-b-frac3} in Lemma \ref{Lpes}, we have
\begin{align*}
&\int_\Omega |(-\Delta)^{\frac{1-s}{2}}(\eta_k(|u|^{m_2-1}u))|^2dxdt
\\&\geq C 1_{s>1-\frac{N}{2}}||\eta_k(|u|^{m_2-1}u))||_{L^{\frac{2N}{N-2(1-s)}}(\Omega)}^2+C 1_{s=1-\frac{N}{2}}||\eta_k(|u|^{m_2-1}u))||_{BMO(\Omega)}^2\\&\nonumber+C 1_{s<1-\frac{N}{2}}||\eta_k(|u|^{m_2-1}u))||_{L^\infty(\Omega)}^2
\\&\nonumber\geq C k^21_{s>1-\frac{N}{2}}||1_{|u|\geq k^{1/m_2}}||_{L^{\frac{2N}{N-2(1-s)}}(\Omega)}^2+C k^21_{s=1-\frac{N}{2}}||1_{|u|\geq k^{1/m_2}}||_{BMO(\Omega)}^2\\&\nonumber+C k^2 1_{s<1-\frac{N}{2}}||1_{|u|\geq k^{1/m_2}}||_{L^\infty(\Omega)}^2.
\end{align*}
Combining this with \eqref{es44}, we deduce
\begin{align}\label{es8}
  &1_{s>1-\frac{N}{2}}\int_{0}^{T}||1_{|u|\geq k}||_{L^{\frac{2N}{N-2(1-s)}}(\Omega)}^2dt+ 1_{s=1-\frac{N}{2}}\int_{0}^{T}||1_{|u|\geq k}||_{BMO(\Omega)}^2dt\\&\nonumber + 1_{s<1-\frac{N}{2}}\int_{0}^{T}||1_{|u|\geq k}||_{L^\infty(\Omega)}^2dt\leq Ck^{-m_1-m_2}M~~\forall~k>0.
\end{align}
\\Case $s>1-\frac{N}{2}$,
\begin{align*}
|\{|u|>k\}|&= \int_{0}^{T}\left[\int_{\Omega}1_{|u|\geq k}\right]^{\frac{2(1-s)}{N}} \left[\int_{\Omega}1_{|u|\geq k}\right]^{\frac{N-2(1-s)}{N}}\\&\leq
\left[\sup_{t\in (0,T)}\int_{\Omega}1_{|u|\geq k}\right]^{\frac{2(1-s)}{N}}\int_{0}^{T}\left[\int_{\Omega}1_{|u|\geq k}\right]^{\frac{N-2(1-s)}{N}}
\\&\overset{\eqref{es8},\eqref{main-es1}}\leq C
\left[k^{-1}M\right]^{\frac{2(1-s)}{N}} k^{-\gamma}M
\\&\leq Ck^{-\gamma-\frac{2(1-s)}{N}} M^{\frac{N+2(1-s)}{N}}.
\end{align*}
Case $s=1-\frac{N}{2}$,~~for any $l>1$
\begin{align*}
|\{|u|>k\}|&= \int_{0}^{T}\left[\int_{\Omega}1_{|u|\geq k}\right]^{1-\frac{1}{l}} \left[\int_{\Omega}1_{|u|\geq k}\right]^{\frac{1}{l}}
\overset{\eqref{es8},\eqref{main-es1}}
\leq Ck^{-\gamma-1+\frac{1}{l}} M^{2-\frac{1}{l}}.
\end{align*}
Case $s<1-\frac{N}{2}$,
\begin{align*}
|\{|u|>k\}|&\leq
\left[\sup_{t\in (0,T)}\int_{\Omega}1_{|u|\geq k}\right]\left[\int_{0}^{T}\sup_{x\in \Omega}1_{|u|\geq k}\right]
\overset{\eqref{es8},\eqref{main-es1}}
\leq Ck^{-\gamma-1} M^{2}.
\end{align*}
Therefore, we get \eqref{main-es1'}.\medskip\\\\
\textbf{2.  Proof of \eqref{es45}.}
Let $\chi$ be a smooth function in $\mathbb{R}^+$ such that $\chi(s)=1$ if $|s|\leq 1$,  and $\chi(s)=0$ if $|s|> 2$.  Set $\psi_{j}(v)=\left[\chi(2^{-j}v)-\chi(2^{-j+1}v)\right] \left(\frac{|v|^{\frac{\gamma}{2m_2}+\theta-1}v}{|v|^{2\theta}+1}\right)$.
Let $U(.)=U(t,.)$ be a unique solution of the extended problem
\begin{equation*}
\left\{
\begin{array}
{ll}%
\operatorname{div}_{x,y}(y^{1-2(1-s)}\nabla_{x,y} U)=0~~&\text{in }\Omega\times(0,\infty),\\
U=0 &\text{on}
~~\partial\Omega\times (0,\infty),
\\
U(x,0)=\psi_j(v(x)) &\text{in}
~~\Omega.
\\
\end{array}
\right.
\end{equation*}

As proof of \eqref{es44}, we have
\begin{align*}
&\int_\Omega\int_{0}^{\infty}y^{1-2(1-s)}|\nabla_{x,y}U|^2dydx\\&\leq C\int_\Omega\int_{0}^{\infty}y^{1-2(1-s)}|\nabla_{x,y} \psi_j(V)|^2dydx+C\int_\Omega\int_{0}^{\infty}|\psi_j^{''}(V)|y^{1-2(1-s)}|\nabla_{x,y}V|^2 |U|dydx.
\end{align*}
Since  $|U|\leq ||\psi_j(v(x)) ||_{L^\infty(\Omega)}\leq C\frac{(2^j)^{\frac{\gamma}{2m_2}+\theta}}{(2^j+1)^{2\theta}}$ and
\begin{align*}
|\psi_j'(V)|\leq C1_{2^{j-1}\leq |V|\leq 2^{j}} \frac{(2^j)^{\frac{\gamma}{2m_2}-1+\theta}}{(2^j+1)^{2\theta}},~~
|\psi_j''(V)|\leq C1_{2^{j-1}\leq |V|\leq 2^{j}} \frac{(2^j)^{\frac{\gamma}{2m_2}-2+\theta}}{(2^j+1)^{2\theta}}.
\end{align*}
so,
\begin{align*}
&\int_{0}^{T}\int_\Omega\int_{0}^{\infty}y^{1-2(1-s)}|\nabla_{x,y}U|^2dydx dt\\&\leq C\frac{(2^j)^{\frac{\gamma}{m_2}-2+2\theta}}{(2^j+1)^{4\theta}}\int_\Omega\int_{0}^{\infty}1_{2^{j-1}\leq |V|\leq 2^{j}}y^{1-2(1-s)}|\nabla_{x,y} V|^2dydx dt\\&\overset{\eqref{es41}}\leq C\frac{(2^j)^{\frac{\gamma}{m_2}-2+2\theta}}{(2^j+1)^{4\theta}} (2^j)^{\frac{m_2-m_1}{m_2}} M
\leq C\frac{(2^j)^{2\theta}}{(2^j+1)^{4\theta}} M.
\end{align*}
Thus,
\begin{align}\label{es46}
\left(\int_{0}^{T}\int_\Omega |(-\Delta)^{\frac{1-s}{2}}\psi_j(v)|^2dx dt\right)^{1/2}\leq C\frac{(2^j)^{\theta}}{(2^j+1)^{2\theta}} M.
\end{align}
Since $
\sum_{j=-k}^{j=k}\psi_{j}(v)\to \frac{|v|^{\frac{\gamma}{2m_2}+\theta-1}v}{|v|^{2\theta}+1}~~\text{as}~~k\to \infty,$
we derive  from \eqref{es46} that
\begin{align*}
\left(\int_{0}^T\int_\Omega|(-\Delta)^{\frac{1-s}{2}}\left(\frac{|v|^{\frac{\gamma}{2m_2}+\theta-1}v}{|v|^{2\theta}+1}\right)|^2dxdt\right)^{1/2}&\leq \sum_{j=-\infty}^{\infty}\left(\int_{0}^{T}\int_\Omega |(-\Delta)^{\frac{1-s}{2}}\psi_j(v)|^2dx dt\right)^{1/2}\\&\leq \sum_{j=-\infty}^{\infty}C\frac{(2^j)^{\theta}}{(2^j+1)^{2\theta}} M
\\&\leq CM
\end{align*}
which implies \eqref{es45}.

\medskip

\begin{lemma}\label{le-2}For $p\in (1,\infty)$
	\begin{align}\label{main-es2}
	\frac{d}{dt}\int_\Omega |u(t)|^p&+ \delta p(p-1)\int_{\Omega} |u|^{p-2}|\nabla u|^2 +\frac{4m_2p(p-1)}{(\gamma+p-1)^2}\int_{\Omega}| (-\Delta)^{\frac{1-s}{2}}(|u|^{\frac{\gamma+p-1}{2}-1}u)|^2\\&\leq p\left|\int_{\Omega} f|u|^{p-2}u\right|\nonumber.
	\end{align}
 In particular, 	{(i).}
\begin{align}\label{main-es2'}
	\int_\Omega |u(t)|^p&+ \delta p(p-1)\int_{0}^{t}\int_{\Omega} |u|^{p-2}|\nabla u|^2 +\frac{4m_2p(p-1)}{(\gamma+p-1)^2}\int_{0}^{t}\int_{\Omega}| (-\Delta)^{\frac{1-s}{2}}(|u|^{\frac{\gamma+p-1}{2}-1}u)|^2\\&\leq \int_\Omega |u_0|^p+p\int_{0}^{t}\int_{\Omega} |f||u|^{p-1},\nonumber
	\end{align}
	for all $t\in (0,T).$
	
{(ii).} \begin{align}\label{main-es2''"}
	\int_\Omega |u(t)|^{\gamma+1}&+ \delta  \int_{0}^{t}\int_{\Omega} |u|^{\gamma-1}|\nabla u|^2 +\int_{0}^{t}\int_{\Omega}| (-\Delta)^{\frac{1-s}{2}}(|u|^{\gamma-1}u)|^2
	\\&\leq C \int_\Omega |u_0|^{\gamma+1}+ C\int_{0}^{t}\int_{\Omega} |(-\Delta)^{-\frac{1-s}{2}}f|^2\nonumber,
	\end{align}
	for all $t\in (0,T).$
\end{lemma}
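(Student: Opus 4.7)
My plan is to test the regularized equation \eqref{pro4} with $p\,|u|^{p-2}u$. This is an admissible test function because the $\delta\Delta u$ term provides the regularity $u\in W^{1,r}(\tau,T;W^{2,r}(\Omega))$ for all $r<\infty$ and $\tau\in(0,T)$ noted in the remark after Proposition \ref{step1}; for $1<p<2$ a preliminary regularization of $|u|^{p-2}u$ by $(\eta^2+u^2)^{(p-2)/2}u$ with $\eta\to 0^{+}$ is standard. The four resulting contributions are handled as follows: the time derivative yields $\frac{d}{dt}\int_\Omega |u|^p$; integration by parts on $-\delta\Delta u$ together with $\nabla(|u|^{p-2}u)=(p-1)|u|^{p-2}\nabla u$ produces $\delta\,p(p-1)\int_\Omega|u|^{p-2}|\nabla u|^2$; and the source term equals $p\int_\Omega f\,|u|^{p-2}u$, bounded in absolute value by $p\bigl|\int_\Omega f\,|u|^{p-2}u\bigr|$.

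The crux is the nonlocal diffusion term. After one integration by parts it becomes
\[
p(p-1)\int_\Omega |u|^{m_1+p-2}\,\nabla(-\Delta)^{-s}(|u|^{m_2-1}u)\cdot\nabla u\,dx.
\]
I will use the identity $|u|^{m_1+p-2}\nabla u=\frac{1}{m_1+p-1}\nabla(|u|^{m_1+p-2}u)$ to turn this into a gradient pairing, then invoke the spectral symmetry $\int_\Omega \nabla(-\Delta)^{-s}A\cdot\nabla B\,dx=\int_\Omega A\,(-\Delta)^{1-s}B\,dx$, valid because $|u|^{m_1+p-2}u$ has zero Dirichlet trace. The term rewrites as
\[
\frac{p(p-1)}{m_1+p-1}\int_\Omega (|u|^{m_2-1}u)\,(-\Delta)^{1-s}(|u|^{m_1+p-2}u)\,dx,
\]
and Corollary \ref{stook-var-in-coro} applied with $q_1=m_2$, $q_2=m_1+p-1$, $\alpha=1-s$ lower-bounds this by $\frac{4m_2\,p(p-1)}{(\gamma+p-1)^2}\int_\Omega |(-\Delta)^{(1-s)/2}(|u|^{(\gamma+p-1)/2-1}u)|^2\,dx$, using that $m_2+m_1+p-1=\gamma+p-1$. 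Assembling the four pieces gives \eqref{main-es2}.

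To obtain (i) I simply integrate \eqref{main-es2} over $[0,t]$ and use the trivial bound $p\bigl|\int_\Omega f\,|u|^{p-2}u\bigr|\le p\int_\Omega |f|\,|u|^{p-1}$. For (ii) I choose $p=\gamma+1$ in the integrated inequality \eqref{main-es2'}: then $(\gamma+p-1)/2=\gamma$, so the fractional dissipation is exactly $|(-\Delta)^{(1-s)/2}(|u|^{\gamma-1}u)|^2$ with positive constant coefficient $\frac{m_2(\gamma+1)}{\gamma}$. Rather than passing to $|f|$, I keep the signed source and use self-adjointness of the spectral Laplacian to write
\[
(\gamma+1)\int_\Omega f\cdot|u|^{\gamma-1}u\,dx=(\gamma+1)\int_\Omega (-\Delta)^{-(1-s)/2}f\cdot(-\Delta)^{(1-s)/2}(|u|^{\gamma-1}u)\,dx,
\]
integrate in time, and apply Cauchy--Schwarz followed by Young's inequality with a small parameter to absorb a fraction of $\|(-\Delta)^{(1-s)/2}(|u|^{\gamma-1}u)\|_{L^2(\Omega_t)}^2$ into the left-hand side, leaving exactly $\int_0^t\int_\Omega|(-\Delta)^{-(1-s)/2}f|^2$ on the right, as claimed.

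I expect the only real obstacle to be technical rather than conceptual: the rigorous use of $|u|^{p-2}u$ as a test function when $1<p<2$, and the symmetric integration by parts identifying the nonlocal term with $\int(|u|^{m_2-1}u)(-\Delta)^{1-s}(|u|^{m_1+p-2}u)$. Both are resolved by the $\eta$-regularization together with the parabolic $W^{2,r}$ smoothness of the viscous solution: with $\eta>0$ every integration by parts is classical, and passage to the limit $\eta\to 0^{+}$ in each integral is handled by monotone or dominated convergence, after which Corollary \ref{stook-var-in-coro} closes the argument.
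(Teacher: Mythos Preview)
Your argument is correct and follows essentially the same route as the paper: test \eqref{pro4} with a regularization of $|u|^{p-2}u$ (the paper uses $(|u|+\varepsilon)^{p-2}u$ instead of your $(\eta^2+u^2)^{(p-2)/2}u$, but this is immaterial), rewrite the nonlocal term via integration by parts and apply Corollary~\ref{stook-var-in-coro} with $q_1=m_2$, $q_2=m_1+p-1$, then for (ii) set $p=\gamma+1$ and absorb the source via fractional duality and Young's inequality. One small caveat: the $W^{1,r}(\tau,T;W^{2,r})$ regularity you invoke is stated in the paper only for the more heavily regularized problem~\eqref{pro2}, not directly for~\eqref{pro4}; the paper itself performs the same formal computation on~\eqref{pro4} without further justification, so your treatment is at the same level of rigor.
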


\begin{proof} 1. For $p\in (1,\infty)$, choosing $(|u|+\varepsilon)^{p-2}u$ as test function of \eqref{pro4},
	\begin{align*}
	&\frac{d}{dt}\int_\Omega\int_{0}^{u(t)}(|a|+\varepsilon)^{p-2}a da dx+ \delta\int_{\Omega} \nabla u\nabla [(|u|+\varepsilon)^{p-2}u]\\&+\int_{\Omega} |u|^{m_1}\nabla (-\Delta)^{-s}(|u|^{m_2-1}u)\nabla \left[(|u|+\varepsilon)^{p-2}u\right]\leq \int_{\Omega} f(|u|+\varepsilon)^{p-2}u
	\end{align*}
	for all $t\in (0,T)$. By Lemma \eqref{le-Stook-Varopoulos} and Corollary \ref{stook-var-in-coro} and  then Letting $\varepsilon\to 0$,
	we have
	\begin{align*}
	\liminf_{\varepsilon\to 0}\int_{\Omega} |u|^{m_1}\nabla (-\Delta)^{-s}(|u|^{m_2-1}u)\nabla \left[(|u|+\varepsilon)^{p-2}u\right]\geq \frac{4m_2(p-1)}{(\gamma+p-1)^2}\int_{\Omega}|(-\Delta)^{\frac{1-s}{2}}(|u|^{\frac{\gamma+p-1}{2}-1}u)|^2
	\end{align*}

	Thus, we find \eqref{main-es2} and \eqref{main-es2'}.\medskip\\
	2. Applying \eqref{main-es2} to $p=\gamma+1$, we have
	\begin{align}\label{es23}
	\frac{d}{dt}\int_\Omega |u(t)|^{\gamma+1}&+ C\delta \int_{\Omega} |u|^{\gamma-1}|\nabla u|^2 +C\int_{\Omega}| (-\Delta)^{\frac{1-s}{2}}(|u|^{\gamma-1}u)|^2\\&\leq (\gamma+1)\left(\int_{\Omega} |(-\Delta)^{-\frac{1-s}{2}}f|^2\right)^{\frac{1}{2}}\left(\int_{\Omega}|(-\Delta)^{\frac{1-s}{2}}(|u|^{\gamma-1}u)|^2\right)^{\frac{1}{2}}\nonumber.
	\end{align}
	So, using  H\"older's inequality we derive \eqref{main-es2''"}. \end{proof}

\begin{lemma}\label{le-3}\begin{align}\label{main-es3}
	||u||_{L^\infty(\Omega_T)}\leq ||u_0||_{L^\infty(\Omega)}+T||f||_{L^\infty(\Omega_T)}.
	\end{align}
\end{lemma}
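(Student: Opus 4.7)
The plan is to derive the $L^\infty$ bound by passing to the limit $p\to\infty$ in the $L^p$ estimate \eqref{main-es2'} already proved in Lemma \ref{le-2}. Since the dissipative terms on the left-hand side of \eqref{main-es2'} (the Laplacian term and the fractional term involving $(-\Delta)^{(1-s)/2}$) are non-negative, I can simply drop them and obtain the clean inequality
\begin{align*}
\int_\Omega |u(t)|^p \leq \int_\Omega |u_0|^p + p\int_0^t\int_\Omega |f||u|^{p-1}\,dxd\tau.
\end{align*}

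Next I would apply H\"older's inequality in the space variable on the right-hand side to get
\begin{align*}
\int_\Omega |f(\tau)||u(\tau)|^{p-1}\,dx \leq \|f(\tau)\|_{L^p(\Omega)}\,\|u(\tau)\|_{L^p(\Omega)}^{p-1}.
\end{align*}
Setting $\phi(t):=\|u(t)\|_{L^p(\Omega)}$, this yields $\phi(t)^p \leq \phi(0)^p + p\int_0^t \|f(\tau)\|_{L^p(\Omega)} \phi(\tau)^{p-1}d\tau$. The key observation is that this integral inequality, by a standard Bihari/Gronwall-type manipulation (formally, $\frac{d}{dt}\phi^p \leq p\|f\|_p\phi^{p-1}$ gives $\frac{d}{dt}\phi \leq \|f\|_p$), implies
\begin{align*}
\|u(t)\|_{L^p(\Omega)} \leq \|u_0\|_{L^p(\Omega)} + \int_0^t \|f(\tau)\|_{L^p(\Omega)}\,d\tau.
\end{align*}
Since $u\in C(0,T;L^r(\Omega))$ for every finite $r$, $\phi$ is continuous, and since $u$ is already known to be bounded (from Proposition \ref{step4}), the formal ODE manipulation is rigorous; if needed one can regularize by replacing $\phi$ by $(\phi^p+\eta)^{1/p}$ and send $\eta\to 0^+$.

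Finally, I let $p\to\infty$. Since $\Omega$ has finite measure and $u_0,f$ are bounded, $\|u_0\|_{L^p(\Omega)}\to\|u_0\|_{L^\infty(\Omega)}$ and $\|f(\tau)\|_{L^p(\Omega)}\to\|f(\tau)\|_{L^\infty(\Omega)}$ for a.e.\ $\tau$, while $\|u(t)\|_{L^p(\Omega)}\to\|u(t)\|_{L^\infty(\Omega)}$. Dominated convergence applied to the time integral (dominated by $T\|f\|_{L^\infty(\Omega_T)}$) then yields
\begin{align*}
\|u(t)\|_{L^\infty(\Omega)} \leq \|u_0\|_{L^\infty(\Omega)} + \int_0^t \|f(\tau)\|_{L^\infty(\Omega)}d\tau \leq \|u_0\|_{L^\infty(\Omega)} + T\|f\|_{L^\infty(\Omega_T)},
\end{align*}
which is \eqref{main-es3}. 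There is no serious obstacle here: all the nonlinear/nonlocal difficulties have already been absorbed into Lemma \ref{le-2} via the Stroock-Varopoulos inequality (Corollary \ref{stook-var-in-coro}), which guaranteed the positivity of the dissipation terms that we simply discard. The only mild technical point is justifying the step from the integral inequality for $\phi^p$ to the linear-growth bound for $\phi$, which is standard.
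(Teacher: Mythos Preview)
Your proof is correct and follows the same overall strategy as the paper: start from the $L^p$ energy inequality \eqref{main-es2'}, discard the nonnegative dissipative terms, and send $p\to\infty$. The difference lies only in how the cross term $p\int_0^t\int_\Omega |f||u|^{p-1}$ is handled at finite $p$. The paper applies Young's inequality with an auxiliary parameter $\lambda>T$, namely $p|f||u|^{p-1}\le \lambda^p|f|^p+\lambda^{-p/(p-1)}|u|^p$, absorbs the $|u|^p$ piece into $\sup_t\int_\Omega|u|^p$, takes $p\to\infty$, and finally lets $\lambda\downarrow T$. You instead use H\"older in space and a Bihari/Gronwall comparison to obtain the sharp bound $\|u(t)\|_{L^p}\le\|u_0\|_{L^p}+\int_0^t\|f(\tau)\|_{L^p}\,d\tau$ already at finite $p$, and then pass to the limit. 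Your route is slightly more direct (no auxiliary parameter, no final limit $\lambda\to T$); the paper's route avoids the ODE/continuity argument by a purely algebraic absorption. Both are standard and equally valid.
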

\begin{proof} From \eqref{main-es2}, we get
	\begin{align*}
	\int_\Omega |u(t)|^p\leq \int_\Omega |u_0|^p+p\int_{0}^{T}\int_{\Omega} |f||u|^{p-1}.
	\end{align*}
	Fix $\lambda>T$, we have
	\begin{align*}
	&(1-T\lambda^{-\frac{p}{p-1}})\sup_{t\in (0,T)}\int_\Omega |u(t)|^p+\lambda^{-\frac{p}{p-1}}\int_{\Omega_T} |u|^p\leq \sup_{t\in (0,T)}\int_\Omega |u(t)|^p\\&~~~~~~~~~\leq \int_\Omega |u_0|^p+p\int_{\Omega_T} |f||u|^{p-1}\\&
	\overset{\text{H\"older's inequality}}\leq \int_\Omega |u_0|^p+\lambda^{p}\int_{\Omega_T} |f|^p+\lambda^{-\frac{p}{p-1}}\int_{\Omega_T}|u|^{p}.
	\end{align*}
	So
	\begin{align*}
	&(1-T\lambda^{-\frac{p}{p-1}})^{1/p}\sup_{t\in (0,T)}\left[\int_\Omega |u(t)|^p\right]^{1/p}\leq \left[\int_\Omega |u_0|^p+\lambda^{p}\int_{\Omega_T} |f|^p\right]^{1/p}.
	\end{align*}
	Letting $p\to\infty$,
	\begin{align*}
	||u||_{L^\infty(\Omega_T)}\leq ||u_0||_{L^\infty(\Omega)}+\lambda||f||_{L^\infty(\Omega_T)}~~\forall \lambda>T,
	\end{align*}
	which implies \eqref{main-es3}.\medskip\\
\end{proof}
\begin{lemma}\label{le-4} If $m_2=1$,
	\begin{align}\label{main-es4}
	&\frac{1}{2}	\int_\Omega |(-\Delta)^{-\frac{s}{2}}u(t)|^2dx+\delta\int_{0}^{t}\int_{\Omega}|(-\Delta)^{\frac{1-s}{2}}u|^2+\int_0^t\int_{\Omega}|u|^{m_1}|\nabla (-\Delta)^{-s}(u)|^{2}dxdt\\&\leq \frac{1}{2}\int_\Omega |(-\Delta)^{-\frac{s}{2}}u_0|^2dx+\left(\int_{0}^{t}\int_\Omega|(-\Delta)^{-\frac{s}{2}}f|^2dxdt\right)^{1/2}
\left(\int_{0}^{t}\int_\Omega|(-\Delta)^{-\frac{s}{2}}u|^2dxdt\right)^{1/2}\nonumber
	\end{align}
	for all $t\in (0,T)$.
\end{lemma}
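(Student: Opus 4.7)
The plan is to test the equation \eqref{pro4} (with $m_2=1$) against the function $\phi = (-\Delta)^{-s}u(\cdot,t)$. Since $u\in X = L^\infty(\Omega_T)\cap C(0,T;L^2(\Omega))\cap L^2(0,T;H^1_0(\Omega))$ and the regularized problem has extra smoothness (by the remark after Proposition \ref{step1}, $u\in W^{1,r}(\tau,T;W^{2,r}(\Omega))$ for $\tau>0$), the admissibility of $\phi$ as a test function can be obtained directly, or by approximating $u$ by smooth functions and passing to the limit in the identity below.

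The key identities, each obtained by an integration by parts and the self-adjointness of $(-\Delta)^{-s/2}$, are
\begin{align*}
\int_{\Omega} \partial_t u \,(-\Delta)^{-s}u\,dx &= \frac{1}{2}\frac{d}{dt}\int_{\Omega}|(-\Delta)^{-s/2}u|^2\,dx,\\
-\delta\int_{\Omega} \Delta u\, (-\Delta)^{-s}u\,dx &= \delta\int_{\Omega}|(-\Delta)^{(1-s)/2}u|^2\,dx,\\
-\int_{\Omega}\dive\bigl(|u|^{m_1}\nabla(-\Delta)^{-s}u\bigr)(-\Delta)^{-s}u\,dx &= \int_{\Omega}|u|^{m_1}|\nabla(-\Delta)^{-s}u|^2\,dx,\\
\int_{\Omega}f\,(-\Delta)^{-s}u\,dx &= \int_{\Omega}(-\Delta)^{-s/2}f\cdot(-\Delta)^{-s/2}u\,dx.
\end{align*}
Summing these and integrating in time from $0$ to $t$ produces the desired identity on the left-hand side of \eqref{main-es4}, with the $f$-term on the right. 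A single application of the Cauchy--Schwarz inequality in $L^2(\Omega_t)$ to the $f$-term yields the stated bound.

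Essentially no obstacle is expected: the argument is a standard energy estimate, and the structural reason it works is precisely that $m_2=1$ makes $(-\Delta)^{-s}u$ the natural primitive of the nonlocal pressure term. The only mildly technical point is the rigorous justification of using $\phi=(-\Delta)^{-s}u$ as a test function in the weak formulation; this is handled by noting that for the regularized problem \eqref{pro4} the solution $u$ has $\partial_t u\in L^2(0,T;H^{-1}(\Omega))$ and $u\in L^2(0,T;H^1_0(\Omega))$, so that $(-\Delta)^{-s}u\in L^2(0,T;H^{1+2s}(\Omega))\subset L^2(0,T;H^1_0(\Omega))$ is an admissible test function and the duality pairing $\langle \partial_t u,(-\Delta)^{-s}u\rangle$ equals the time derivative of $\tfrac12\|(-\Delta)^{-s/2}u\|_{L^2}^2$ in the sense of distributions. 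Integrating in $t$ and using $u(0)=u_0$ then closes the estimate.
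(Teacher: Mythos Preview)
Your proposal is correct and follows exactly the approach of the paper, which simply states ``Choosing $(-\Delta)^{-s}u$ as test function of \eqref{pro4}, we find \eqref{main-es4}.'' You have merely supplied the details of the four integration-by-parts/self-adjointness identities and the admissibility of the test function that the paper leaves implicit.
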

\begin{proof}Choosing $(-\Delta)^{-s}u$ as test function of \eqref{pro4}, we find \eqref{main-es4}.
\end{proof}
\begin{lemma}\label{le-5}
	Assume $f=0$. Let  $q_0\geq 1$ be such that  $N(\gamma-1)+2q_0(1-s)>0$. Then, there holds \begin{align}\label{main-es5}
	||u(t)||_{q}\leq  C ||u_0||_{L^{q_0}(\Omega)}^{\frac{N(\gamma-1)\frac{q_0}{q}+2q_0(1-s)}{N(\gamma-1)+2q_0(1-s)}}t^{-\frac{(1-\frac{q_0}{q})N}{N(\gamma-1)+2q_0(1-s)}}~~\forall ~q\in [q_0,\infty].
	\end{align}
\end{lemma}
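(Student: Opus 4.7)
The idea is to combine the $L^p$ energy estimate \eqref{main-es2'} of Lemma~\ref{le-2} (with $f=0$) with a fractional Sobolev embedding and a H\"older interpolation to produce an ODE inequality of the form $\phi'(t)\le -c\,\phi(t)^{\alpha}$ with $\alpha>1$; this integrates to a power decay of $\phi(t)=\|u(t)\|_p^p$. Once this is obtained for every finite $p\in[q_0,\infty)$, the limit $p\to\infty$ (equivalently, a Moser iteration) recovers the $L^\infty$ bound.

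\textbf{Step 1 (energy + Sobolev).} With $f=0$, Lemma~\ref{le-2} gives, for every $p\in(1,\infty)$,
\begin{equation*}
\frac{d}{dt}\|u(t)\|_{L^p(\Omega)}^p \;+\; \frac{4m_2\,p(p-1)}{(\gamma+p-1)^2}\int_\Omega\bigl|(-\Delta)^{(1-s)/2}\bigl(|u|^{(\gamma+p-1)/2-1}u\bigr)\bigr|^2\,dx\;\le\;0.
\end{equation*}
Setting $w:=|u|^{(\gamma+p-1)/2}$, the fractional Sobolev embedding $H^{1-s}_0(\Omega)\hookrightarrow L^{r}(\Omega)$ with $r=\tfrac{2N}{N-2(1-s)}$ (in the regime $s>1-N/2$; in the borderline and trivial regimes $s=1-N/2$ and $s<1-N/2$, one replaces $r$ by an arbitrary finite exponent or by $\infty$, the estimate being stronger there) yields
\begin{equation*}
\int_\Omega|(-\Delta)^{(1-s)/2}w|^2\,dx\;\ge\;C\|w\|_{L^r(\Omega)}^2\;=\;C\,\|u\|_{L^{r(\gamma+p-1)/2}(\Omega)}^{\gamma+p-1}.
\end{equation*}

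\textbf{Step 2 (interpolation).} Since $q_0\le p<r(\gamma+p-1)/2$ (which follows from $N(\gamma-1)+2q_0(1-s)>0$ after a short computation), H\"older interpolation gives $\|u\|_p\le\|u\|_{q_0}^{\theta}\|u\|_{r(\gamma+p-1)/2}^{1-\theta}$ with $\tfrac{1}{p}=\tfrac{\theta}{q_0}+\tfrac{1-\theta}{r(\gamma+p-1)/2}$. Because \eqref{main-es2'} with $p=q_0$ (or \eqref{main-es1} if $q_0=1$) and $f=0$ gives the contraction $\|u(t)\|_{q_0}\le\|u_0\|_{q_0}$, I get a lower bound
\begin{equation*}
\|u\|_{L^{r(\gamma+p-1)/2}(\Omega)}^{\gamma+p-1}\;\ge\;\|u_0\|_{L^{q_0}(\Omega)}^{-\kappa}\,\|u(t)\|_{L^p(\Omega)}^{\mu},
\end{equation*}
with $\mu/p>1$ and with $\kappa,\mu$ expressible in terms of $N,s,\gamma,p,q_0$.

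\textbf{Step 3 (ODE and integration).} Inserting this into Step~1 produces, for $\phi(t):=\|u(t)\|_p^p$,
\begin{equation*}
\phi'(t)\;+\;C(p)\,\|u_0\|_{L^{q_0}(\Omega)}^{-\kappa}\,\phi(t)^{\mu/p}\;\le\;0,\qquad \mu/p>1.
\end{equation*}
A direct integration from $0$ to $t$ (using $\phi(0)^{1-\mu/p}\ge 0$) gives $\phi(t)\le C\|u_0\|_{q_0}^{\alpha(p)}\,t^{-\beta(p)}$, and taking the $p$-th root yields \eqref{main-es5} for $q=p\in[q_0,\infty)$; a routine computation of $\theta$, $\kappa$, $\mu$ reproduces the exponents $\tfrac{N(\gamma-1)q_0/q+2q_0(1-s)}{N(\gamma-1)+2q_0(1-s)}$ and $\tfrac{(1-q_0/q)N}{N(\gamma-1)+2q_0(1-s)}$.

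\textbf{Step 4 ($q=\infty$) and main obstacle.} To pass to $q=\infty$, I would either let $p\to\infty$ in the finite-$q$ estimate (noting that the constants stabilize, since the factor $\tfrac{p(p-1)}{(\gamma+p-1)^2}$ has a positive limit), or run a Moser iteration $p_k=p_0\sigma^k$ with $\sigma>1$ and show that $\prod_k C(p_k)^{1/p_k}$ and the cumulative time exponent converge to the values stated in \eqref{main-es5}. The delicate point I expect is precisely this bookkeeping: tracking how the constants from the Sobolev step and the interpolation step multiply along the iteration so that the limit exponents agree with the claimed formula and are uniform in the three sub-cases $s\gtreqqless 1-N/2$. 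The sub-case analysis is handled by the observation that in the BMO and $L^\infty$ regimes the Sobolev step of Step~1 gives a \emph{stronger} control, so the same ODE scheme works and yields at least the decay stated under the hypothesis $N(\gamma-1)+2q_0(1-s)>0$.
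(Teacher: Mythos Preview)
Your Steps 1--3 are correct and give, for each fixed finite $p>q_0$, the estimate
\[
\|u(t)\|_p\le \bigl[(\alpha-1)K\bigr]^{-1/(p(\alpha-1))}\,\|u_0\|_{q_0}^{\text{(right exponent)}}\,t^{-\text{(right exponent)}},
\]
where $K$ collects the Sobolev constant and the factor $4m_2p(p-1)/(\gamma+p-1)^2$, and $\alpha-1=\dfrac{N(\gamma-1)+2q_0(1-s)}{N(p-q_0)}$. The gap is in Step~4, option~(a): the constants do \emph{not} stabilize as $p\to\infty$. While $K$ stays bounded below, $\alpha-1\sim C/p$ whereas $p(\alpha-1)\to \dfrac{N(\gamma-1)+2q_0(1-s)}{N}$, so the prefactor
\[
\bigl[(\alpha-1)K\bigr]^{-1/(p(\alpha-1))}\ \sim\ \Bigl(\frac{p}{C'}\Bigr)^{N/(N(\gamma-1)+2q_0(1-s))}\ \longrightarrow\ \infty.
\]
Thus letting $p\to\infty$ in your direct estimate does not yield the $L^\infty$ bound, and in fact your Steps~1--3 only prove \eqref{main-es5} with a constant that blows up in $q$.

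The paper's proof is precisely the Moser iteration you propose as the alternative: it interpolates $\|u\|_{p_k}$ between $\|u\|_{p_{k-1}}$ and $\|u\|_{r(\gamma+p_k-1)/2}$ with $p_k=2^kq_0$, converts the differential inequality into a recursion for $F_{p_k}(t):=\|u(t)\|_{p_k}^{-1}$, and then does the careful bookkeeping you anticipate, showing that $b_k:=\log c_k$ satisfies a recursion of the form $|b_k|\le C\,(7/4)^{-k}+\lambda_k|b_{k-1}|$ with $\lambda_k\to 1$ in a summable way, so that $\sup_k|b_k|<\infty$. After this, interpolation between the $p_k$'s gives the estimate for all $q\in[q_0,\infty]$ with a uniform constant. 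In short: your option~(a) fails, and your option~(b) is the actual proof; the key content of the argument is exactly the constant-tracking you flagged as ``the delicate point''.
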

\begin{proof}
Applying \eqref{main-es2} to $f=0$,
\begin{align}
\frac{d}{dt}\int_\Omega |u(t)|^p +\frac{4m_2p(p-1)}{(\gamma+p-1)^2}\int_{\Omega}| (-\Delta)^{\frac{1-s}{2}}(|u|^{\frac{\gamma+p-1}{2}})|^2\leq 0.
\end{align}
By \eqref{L^p-b-frac1}, \eqref{L^p-b-frac2} and \eqref{L^p-b-frac3} in Lemma \ref{Lpes}, we have
\begin{align}\label{es24}
\frac{d}{dt}\int_\Omega |u(t)|^p&\leq- C\frac{4m_2p(p-1)}{(\gamma+p-1)^2}\left[1_{s>1-\frac{N}{2}}||u||_{L^{\frac{(\gamma+p-1)N}{N-2(1-s)}}(\Omega)}^{\gamma+p-1}\right.\\&\left.~~~+1_{s=1-\frac{N}{2}}|||u|^{\frac{\gamma+p-1}{2}}||_{BMO(\Omega)}^2+1_{s<1-\frac{N}{2}}||u||_{L^\infty(\Omega)}^{\gamma+p-1}\right]\nonumber\\&\leq -C\left[1_{s>1-\frac{N}{2}}||u||_{L^{\frac{(\gamma+p-1)N}{N-2(1-s)}}(\Omega)}^{\gamma+p-1}+1_{s=1-\frac{N}{2}}|||u|^{\frac{\gamma+p-1}{2}}||_{BMO(\Omega)}^2+1_{s<1-\frac{N}{2}}||u||_{L^\infty(\Omega)}^{\gamma+p-1}\right]\nonumber
\end{align}
for all $p>p_0>1$, since $
\frac{4m_2p(p-1)}{(\gamma+p-1)^2}\geq C_{p_0}$ for all $p>p_0$.\\
Let $q_0\geq 1$ be such that $N(\gamma-1)+2q_0(1-s)>0$.\medskip\\
It is enough to prove \eqref{main-es5} with  $||u_0||_{L^{q_0}(\Omega)}=1$. By \eqref{main-es2'}, we have $||u(t)||_{L^{q_0}(\Omega)}\leq 1$.

Assume $s>1-\frac{N}{2}$. We have  from \eqref{es24} that
\begin{align}\label{es25}
\frac{d}{dt}\int_\Omega |u(t)|^p\leq- C||u||_{L^{\frac{(\gamma+p-1)N}{N-2(1-s)}}(\Omega)}^{\gamma+p-1}.
\end{align}
Let  $q_0\leq q<p$. Clearly,
\begin{align*}
\frac{\gamma-1}{q}+\frac{2(1-s)}{N}>0,~~\beta=\frac{\frac{\gamma-1}{p}+\frac{2(1-s)}{N}}{\frac{\gamma+p-1}{q}-1+\frac{2(1-s)}{N}}\in (0,1).
\end{align*}
By interpolation inequality,
\begin{align*}
||u||_{L^{\frac{(\gamma+p-1)N}{N-2(1-s)}}(\Omega)}^{\gamma+p-1}\geq ||u||_{L^p(\Omega)}^{\frac{\gamma+p-1}{1-\beta}}||u||_{L^q(\Omega)}^{-\frac{\beta(\gamma+p-1)}{1-\beta}}.
\end{align*}
Thus,
\begin{align*}
\frac{d}{dt}\int_\Omega |u(t)|^p&\leq-C||u||_{L^p(\Omega)}^{\frac{\frac{\gamma+p-1}{q}-1+\frac{2(1-s)}{N}}{\frac{1}{q}-\frac{1}{p}}}||u||_{L^q(\Omega)}^{-\frac{\frac{\gamma-1}{p}+\frac{2(1-s)}{N}}{\frac{1}{q}-\frac{1}{p}}}.
\end{align*}
Set $F_{r}(t)=||u(t)||_{L^r(\Omega)}^{-1}$ for all $r\in (1,\infty]$, $t\mapsto F_{r}(t)$ is nondecreasing and
\begin{align*}
\frac{d}{dt}F_{p}(t)^{-p}\leq -C F_{p}(t)^{-\frac{\frac{\gamma+p-1}{q}-1+\frac{2(1-s)}{N}}{\frac{1}{q}-\frac{1}{p}}} F_{q}(t)^{\frac{\frac{\gamma-1}{p}+\frac{2(1-s)}{N}}{\frac{1}{q}-\frac{1}{p}}}.
\end{align*}
Leads to
\begin{align*}
\frac{d}{dt}F_p(t)^{\frac{p(N(\gamma-1)+2q(1-s))}{N(p-q)}}\geq C\frac{N(\gamma-1)+2q(1-s)}{N(p-q)}F_{q}(t)^{\frac{q(N(\gamma-1)+2p(1-s))}{N(p-q)}}.
\end{align*}

Now we apply this to $p=p_k=2^{k}q_0$ and $q=p_{k-1}=2^{k-1}q_0$
\begin{align*}
F_{p_k}(t)^{\frac{2(N(\gamma-1)+2^{k}q_0(1-s))}{N}}\geq \int_{0}^{t}C\frac{N(\gamma-1)+2^k q_0(1-s)}{N2^{k-1}q_0}F_{p_{k-1}}(\tau)^{\frac{(N(\gamma-1)+2^{k+1}q_0(1-s))}{N}}d\tau~~\forall~~t>0.
\end{align*}
Thus,
\begin{align}\label{es27}
F_{p_k}(t)\geq c_k t^{\vartheta_k}~~\forall~t>0
\end{align}
where $c_\kappa,\vartheta_k$ satisfy  $
c_0=1,\vartheta_0=0 $ and
\begin{align*}
c_{k}=\left[C\frac{N(\gamma-1)+2 q_0(1-s)}{N(2^{k}-1)q_0} c_{k-1}^{\frac{(N(\gamma-1)+2^{k+1}q_0(1-s))}{N}}\right]^{\frac{N}{2(N(\gamma-1)+2^{k}q_0(1-s))}}, \vartheta_k=\frac{(1-2^{-k})N}{N(\gamma-1)+2q_0(1-s)}.
\end{align*}
Set $b_k=\log(c_k)$, we have
\begin{align*}
b_k&=\frac{N \log\left[C\frac{N(\gamma-1)+2 q_0(1-s)}{N(2^{k}-1)q_0}\right]}{2(N(\gamma-1)+2^{k}q_0(1-s))}+\frac{1}{2}\frac{(N(\gamma-1)+2^{k+1}q_0(1-s))}{(N(\gamma-1)+2^{k}q_0(1-s))} b_{k-1}.
\end{align*}
It follows,
\begin{align*}
|b_k|\leq  \frac{C}{(7/4)^k}+\frac{1}{2}\frac{(N(\gamma-1)+2^{k+1}q_0(1-s))}{(N(\gamma-1)+2^{k}q_0(1-s))} |b_{k-1}|.
\end{align*}
 There exists $k_0\geq 10$ such that
\begin{align*}
\frac{1}{2}\frac{(N(\gamma-1)+2^{k+1}q_0(1-s))}{(N(\gamma-1)+2^{k}q_0(1-s))}-\frac{4}{7}\geq  \frac{1}{7}~~\forall~~k\geq k_0.
\end{align*}
It is equivalent to
\begin{align*}
\frac{C}{(7/4)^k}\leq \frac{1}{2}\frac{(N(\gamma-1)+2^{k+1}q_0(1-s))}{(N(\gamma-1)+2^{k}q_0(1-s))} \frac{4C}{(7/4)^{k-1}}-\frac{4C}{(7/4)^{k}}~~\forall~~k\geq k_0.
\end{align*}
So,
\begin{align*}
|b_k|+\frac{4C}{(7/4)^{k}}\leq  \frac{1}{2}\frac{(N(\gamma-1)+2^{k+1}q_0(1-s))}{(N(\gamma-1)+2^{k}q_0(1-s))} \left[|b_{k-1}|+\frac{4C}{(7/4)^{k-1}}\right]~~~~\forall~~k\geq k_0.
\end{align*}
Thus,
\begin{align*}
|b_k|+\frac{4C}{(7/4)^{k}}\leq \frac{1}{2^{k-k_0}} \frac{(N(\gamma-1)+2^{k+1}q_0(1-s))}{(N(\gamma-1)+2^{k_0}q_0(1-s))} \left[|b_{k_0-1}|+\frac{4C}{(7/4)^{k_0-1}}\right]~~~~\forall~~k\geq k_0
\end{align*}
This means,
$$|b_k|\leq C~~\forall~~k\geq 0.$$
Hence, \eqref{es27} implies
\begin{align}
||u(t)||_{2^{k}q_0}=F_{2^{k}q_0}(t)^{-1}\leq  C t^{-\frac{(1-2^{-k})N}{N(\gamma-1)+2q_0(1-s)}}.
\end{align}
Using interpolation inequality, we get
\begin{align*}
||u(t)||_{q}\leq  C t^{-\frac{(1-\frac{q_0}{q})N}{N(\gamma-1)+2q_0(1-s)}}~~\forall ~q\geq q_0
\end{align*}
which implies \eqref{main-es5} for case $s>1-\frac{N}{2}$.\medskip\\
Similarly, we also obtain \eqref{main-es5} for case $s\leq 1-\frac{N}{2}$, we omit the details.\end{proof}
\begin{lemma}\label{le-6}	Assume $f=0$. let  $q_0\geq 1$ be such that  $N(\gamma-1)+2q_0(1-s)>0$. Then, \begin{align}\label{main-es6}
\int_{\Omega}| (-\Delta)^{\frac{1-s}{2}}(|u(t)|^{\frac{\gamma+q-1}{2}-1}u(t)|^2dx\leq  C ||u_0||_{L^{q_0}(\Omega)}^{\frac{N(\gamma-1)q_0+2q_0q(1-s)}{N(\gamma-1)+2q_0(1-s)}}t^{-\frac{(q-q_0)N}{N(\gamma-1)+2q_0(1-s)}-1}
	\end{align}
	for all $q\in [q_0,\infty)\cap(1,\infty).$
\end{lemma}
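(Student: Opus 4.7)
The plan is to deduce \eqref{main-es6} by combining a Bénilan--Crandall pointwise estimate on $\partial_t u$ with the $L^q$ smoothing already proved in Lemma \ref{le-5}, bypassing the need for the energy identity of Lemma \ref{le-2} at this stage.

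First, I would establish the Bénilan--Crandall inequality
\begin{equation*}
\partial_t u \;\geq\; -\frac{u}{(\gamma-1)\,t}\qquad\text{in } \Omega\times(0,T),
\end{equation*}
which encodes the optimal decay rate $u\sim t^{-1/(\gamma-1)}$ dictated by the scaling invariance $u(x,t)\mapsto \lambda u(x,\lambda^{\gamma-1}t)$ of the unperturbed equation. Classically, this is derived by differentiating the family of scaled solutions in $\lambda$ at $\lambda=1$ and invoking a comparison principle for the linearized problem.

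Second, rewriting the equation as $\nabla\cdot(|u|^{m_1}\nabla(-\Delta)^{-s}(|u|^{m_2-1}u))=\partial_t u \geq -u/((\gamma-1)t)$, I would test against $|u|^{q-2}u$ with $q>1$, integrate by parts, and apply the Stroock--Varopoulos inequality of Corollary \ref{stook-var-in-coro} with the choice $q_1=m_1+q-1$, $q_2=m_2$ (so that $q_1+q_2=\gamma+q-1$). This yields
\begin{equation*}
\frac{4(q-1)m_2}{(\gamma+q-1)^2}\int_\Omega \bigl|(-\Delta)^{\frac{1-s}{2}}(|u|^{\frac{\gamma+q-1}{2}-1}u)\bigr|^2\,dx \;\leq\; \frac{1}{(\gamma-1)\,t}\,\|u(t)\|_{L^q(\Omega)}^q.
\end{equation*}
Inserting the $L^q$ bound \eqref{main-es5} with $p=q$ on the right-hand side and noting that the exponents $q\alpha$ and $q\beta+1$ collapse to
\begin{equation*}
\frac{N(\gamma-1)q_0+2q_0q(1-s)}{N(\gamma-1)+2q_0(1-s)}\qquad\text{and}\qquad \frac{(q-q_0)N}{N(\gamma-1)+2q_0(1-s)}+1,
\end{equation*}
respectively, produces \eqref{main-es6}.

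The main obstacle will be a rigorous justification of the Bénilan--Crandall estimate for the regularized solutions $u_\delta$ constructed in Section \ref{sec.reg}. The added term $\delta\Delta u$ breaks the exact scaling invariance on which the comparison argument rests, and sign-changing solutions complicate the maximum principle; generically one only obtains a corrected inequality $\partial_t u_\delta \geq -u_\delta/((\gamma-1)t)-R_\delta$ with a remainder $R_\delta$ vanishing as $\delta\to 0$. I would circumvent this by first running the argument on smooth, strictly positive approximations, deriving a $\delta$-uniform version of the pointwise bound, and then passing to the limit using the convergences of Propositions \ref{step1}--\ref{step4} together with lower semicontinuity of the Dirichlet form and Fatou's lemma, which preserves \eqref{main-es6} in the limit solution of Theorem \ref{mainthm}. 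An alternative route, slightly less direct, is to integrate the energy identity of Lemma \ref{le-2} over $[t/2,t]$, combine with Lemma \ref{le-5} to get an averaged bound on $F_q$, and then use monotonicity of $\sigma\mapsto F_q(\sigma)$ to pass to the pointwise statement --- which however reduces to the same scaling/monotonicity question in a different form.
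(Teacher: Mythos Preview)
Your primary route through the B\'enilan--Crandall inequality has a structural gap: the hypothesis of the lemma is $N(\gamma-1)+2q_0(1-s)>0$ with $q_0\geq 1$, which permits $\gamma\leq 1$ (for instance $N=1$, $s=1/2$ gives merely $\gamma>0$). The pointwise bound $\partial_t u\geq -u/((\gamma-1)t)$ is meaningless or reversed in that range, so the argument collapses precisely where the lemma still claims an estimate. Even in the regime $\gamma>1$, the B\'enilan--Crandall bound rests on a comparison principle for the equation, which the paper never establishes for this nonlocal operator on a bounded domain with sign-changing solutions; you flag this yourself, but the proposed cure --- running the argument on positive approximants and passing to the limit --- does not recover the signed case, since the lemma is stated for arbitrary bounded data.

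The paper's proof is exactly your ``alternative route'', but the passage from the averaged bound to the pointwise one does \emph{not} use monotonicity of the Dirichlet form $t\mapsto\int_\Omega|(-\Delta)^{\frac{1-s}{2}}(|u(t)|^{\frac{\gamma+q-1}{2}-1}u(t))|^2\,dx$ (there is no reason this should be monotone). Instead the paper proves an elementary lemma: if $F\geq 0$ satisfies $\int_{t/4}^{2t}F(\sigma)\,d\sigma\leq t^{-\alpha}$ for all $t>0$, then $F(t)\leq 2t^{-\alpha-1}$ for a.e.\ $t$, obtained by mollifying $F$ and applying a mean-value argument on the interval $[t/2,3t/2]$. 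One then integrates the differential inequality \eqref{main-es2} (with $f=0$, $p=q$) over $[t/4,2t]$ to get
\[
\int_{t/4}^{2t}\int_\Omega|(-\Delta)^{\frac{1-s}{2}}(|u|^{\frac{\gamma+q-1}{2}-1}u)|^2\,dx\,d\sigma\;\leq\;C\int_\Omega|u(t/4)|^q\,dx,
\]
bounds the right-hand side by Lemma~\ref{le-5}, and feeds the result into the elementary lemma. This avoids any comparison principle, any sign restriction, and any constraint on $\gamma$ beyond the stated one.
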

 \begin{proof} First, we prove that if $F:(0,\infty)\to (0,\infty)$ satisfies
 	\begin{align}\label{es63}
 	\int_{t/4}^{2t} F(s)ds\leq t^{-\alpha}~~\forall~~t>0,
 	\end{align}
 	for some $\alpha>0$, then
 	\begin{align}\label{es61}
 	F(t)\leq 2 t^{-\alpha-1}~~\forall~~t>0.
 	\end{align}
 	Indeed, let $\chi_\varepsilon$ be the standard mollifiers in $\mathbb{R}$ with $\supp \chi_\varepsilon\subset B_\varepsilon(0)$. Let $t>0$ be such that $
 	\lim\limits_{\varepsilon\to 0} (\chi_\varepsilon* F)(t)=F(t).$ We have for all $\varepsilon\in (0,t/8)$,
 	\begin{align*}
 	\int_{t/2}^{3t/2}(\chi_\varepsilon* F)(s)ds\leq \int_{t/4}^{2t} F(s)ds\leq t^{-\alpha}.
 	\end{align*}
 Applying a mean value principle to the smooth function $\chi_\varepsilon* F$  yields
 	\begin{align*}
 	(\chi_\varepsilon* F)(t)\leq \frac{2}{t}\max_{\tau\in [0,t/2]}\int_{t/2+\tau}^{t+\tau}(\chi_\varepsilon* F)(s)ds \leq  \frac{2}{t} \int_{t/2}^{3t/2}(\chi_\varepsilon* F)(s)ds
 	\leq  2t^{-\alpha-1}.
 	\end{align*}
 	Letting $\varepsilon\to 0$, we get \eqref{es61}.\\
 	By \eqref{main-es2}, we have \begin{align*}
 \int_{t/4}^{2t}\int_{\Omega}| (-\Delta)^{\frac{1-s}{2}}(|u|^{\frac{\gamma+q-1}{2}-1}u)|^2&\leq \int_\Omega |u(t/4)|^q\\&\overset{\eqref{main-es5}}\leq C ||u_0||_{L^{q_0}(\Omega)}^{\frac{N(\gamma-1)q_0+2q_0q(1-s)}{N(\gamma-1)+2q_0(1-s)}}t^{-\frac{(q-q_0)N}{N(\gamma-1)+2q_0(1-s)}}.
 	\end{align*}
 Applying \eqref{es63} to $\alpha=\frac{(q-q_0)N}{N(\gamma-1)+2q_0(1-s)}$ and $F(s)=\int_{\Omega}| (-\Delta)^{\frac{1-s}{2}}(|u(s)|^{\frac{\gamma+q-1}{2}-1}u(s)|^2dx$, we find \eqref{main-es6}.
 \end{proof}
\begin{lemma}\label{le-7} There exists a subsequence of ${u_\delta}$ converging to a solution $u$ of Problem \eqref{pro1}. Moreover, $u$ satisfies the properties stated in Lemmas \ref{le-1}, \ref{le-2}, \ref{le-3}, \ref{le-4},\ref{le-5} and \ref{le-6} with $\delta=0$.
\end{lemma}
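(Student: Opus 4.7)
The plan is to extract uniform $\delta$-independent bounds from Lemmas~\ref{le-1}--\ref{le-6} applied to $u_\delta$, use Simon's compactness Lemma~\ref{Simonlemma} to pass to a strongly convergent subsequence, and then send $\delta\to 0$ in the weak formulation of \eqref{pro4}. First, Lemma~\ref{le-3} gives $\|u_\delta\|_{L^\infty(\Omega_T)}\leq \|u_0\|_{L^\infty(\Omega)}+T\|f\|_{L^\infty(\Omega_T)}$ uniformly in $\delta$, so in particular $u_\delta$ is bounded in $L^q(\Omega_T)$ for every $q<\infty$. Next, Lemma~\ref{le-2} applied with $p=\gamma+1$ (combined with Corollary~\ref{stook-var-in-coro}) yields a uniform bound on $\|(-\Delta)^{\frac{1-s}{2}}(|u_\delta|^{\gamma-1}u_\delta)\|_{L^2(\Omega_T)}$, and hence on $\||u_\delta|^{\gamma-1}u_\delta\|_{L^2(0,T;H^{1-s}(\Omega))}$. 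Moreover the same energy identity (with $p=2$) gives $\sqrt{\delta}\,\|u_\delta\|_{L^2(0,T;H^1_0(\Omega))}\leq C$, which is the key smallness that will kill the viscous regularization.

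For the time derivative, Lemma~\ref{le-f} with $v=u_\delta$ bounds the nonlinear flux term: there exist $\vartheta\in(0,1/2)$ and $r>2$ such that
\begin{align*}
\|\operatorname{div}(|u_\delta|^{m_1}\nabla(-\Delta)^{-s}|u_\delta|^{m_2-1}u_\delta)\|_{L^2(0,T;(H^1_0(\Omega)\cap W^{2-\vartheta,r}(\Omega))^*)}\leq C.
\end{align*}
The viscous term $\delta\Delta u_\delta$ tends to zero in $L^2(0,T;H^{-1}(\Omega))$ since $\|\delta\Delta u_\delta\|_{H^{-1}}\leq \sqrt{\delta}\cdot\sqrt{\delta}\|u_\delta\|_{H^1_0}\to 0$. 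Combined with $f\in L^\infty\subset L^2$, this gives $\|\partial_t u_\delta\|_{L^2(0,T;Y^*)}\leq C$ for $Y=H^1_0(\Omega)\cap W^{2-\vartheta,r}(\Omega)$. Applying Lemma~\ref{Simonlemma} with $\alpha_1=\gamma$, $\alpha_2=1-s$ and some $q>1$, I extract a subsequence (still denoted $u_\delta$) with $u_\delta\to u$ in $L^1(\Omega_T)$, and then by uniform $L^\infty$ bounds and interpolation, in $L^p(\Omega_T)$ for every $p<\infty$.

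The passage to the limit in the weak formulation is standard once this convergence is known. The term $-\int u_\delta\phi_t$ converges by strong $L^1$ convergence; the term $\delta\int\nabla u_\delta\cdot\nabla\phi$ is $O(\sqrt{\delta})$ by Cauchy--Schwarz. For the nonlinear term, I write
\begin{align*}
\int_0^T\!\!\int_\Omega |u_\delta|^{m_1}\nabla(-\Delta)^{-s}(|u_\delta|^{m_2-1}u_\delta)\cdot\nabla\phi \,dxdt = \int_0^T\!\!\int_\Omega (-\Delta)^{-s}(|u_\delta|^{m_2-1}u_\delta)\,\operatorname{div}\!\bigl(|u_\delta|^{m_1}\nabla\phi\bigr)dxdt,
\end{align*}
and pass to the limit exactly as in Proposition~\ref{step3}: strong $L^p$ convergence of $u_\delta\to u$ plus the uniform $L^\infty$ bound give strong $L^2$ convergence of $|u_\delta|^{m_2-1}u_\delta\to|u|^{m_2-1}u$ and of $|u_\delta|^{m_1}\nabla\phi\to|u|^{m_1}\nabla\phi$, and the continuity of $(-\Delta)^{-s}$ on $L^2$ closes the identification.

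Finally, each estimate in Lemmas~\ref{le-1}--\ref{le-6} is inherited by $u$ via weak(-$*$) lower semicontinuity, after passing to further subsequences so that $u_\delta\rightharpoonup u$ weakly-$*$ in $L^\infty(0,T;L^p(\Omega))$ for the pointwise-in-time $L^p$ bounds, and so that $(-\Delta)^{\frac{1-s}{2}}(|u_\delta|^{\frac{\gamma+p-1}{2}-1}u_\delta)\rightharpoonup (-\Delta)^{\frac{1-s}{2}}(|u|^{\frac{\gamma+p-1}{2}-1}u)$ weakly in $L^2(\Omega_T)$ for the fractional energy inequality (the limit being identified using the strong $L^q$ convergence of $u_\delta$). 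The non-negative viscous contribution $\delta(p-1)\int|u_\delta|^{p-2}|\nabla u_\delta|^2$ on the left is simply discarded. The main obstacle in the whole scheme is ensuring that the nonlinear flux converges in the sense of distributions; this is delicate because $|u_\delta|^{m_1}\nabla(-\Delta)^{-s}(|u_\delta|^{m_2-1}u_\delta)$ is not a priori bounded in $L^1$, but is resolved precisely by the test-function reformulation used in Lemma~\ref{le-f}, which trades a gradient onto the smooth test function $\phi$ and then requires only the already-established $H^{(1-2s)^+}$ control of $|u_\delta|^{\gamma-1}u_\delta$.
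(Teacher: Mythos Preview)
Your overall strategy matches the paper's: uniform $L^\infty$ and fractional energy bounds from Lemmas~\ref{le-2}--\ref{le-3}, flux control via Lemma~\ref{le-f}, compactness via Lemma~\ref{Simonlemma}, and then passage to the limit with the estimates inherited by lower semicontinuity. There is, however, a genuine gap in your treatment of the nonlinear term when $s<\tfrac12$.

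The displayed identity you write,
\[
\int |u_\delta|^{m_1}\nabla(-\Delta)^{-s}(|u_\delta|^{m_2-1}u_\delta)\cdot\nabla\phi
= -\int (-\Delta)^{-s}(|u_\delta|^{m_2-1}u_\delta)\,\operatorname{div}\bigl(|u_\delta|^{m_1}\nabla\phi\bigr),
\]
followed by ``strong $L^2$ convergence plus continuity of $(-\Delta)^{-s}$ on $L^2$'', does not close for $s<\tfrac12$: the factor $\operatorname{div}(|u_\delta|^{m_1}\nabla\phi)$ contains $\nabla|u_\delta|^{m_1}\cdot\nabla\phi$, whose $L^2$ norm is only $O(\delta^{-1/2})$, while $(-\Delta)^{-s}(\cdot)$ lands in $H^{2s}$ with $2s<1$, so the pairing is not controlled. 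Your reference to Proposition~\ref{step3} is also off: there the needed regularity of $|u|^{m_2-1}u$ came from the auxiliary $\varpi(-\Delta)^{s_0}$-term, which has been removed by this stage.

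What Lemma~\ref{le-f} actually does---and what the paper uses here---is a \emph{fractional} split with the specific $\beta\in(s,\tfrac12)$ determined by $\frac{(1-2s)m_1}{\gamma}=1-2\beta$, so that
\[
\langle E_\delta,\phi\rangle=\int(-\Delta)^{\beta-s}(|u_\delta|^{m_2-1}u_\delta)\,(-\Delta)^{-\beta}\bigl[\operatorname{div}(|u_\delta|^{m_1}\nabla\phi)\bigr],
\]
with both $L^2$ factors controlled by powers of $\||u_\delta|^{\gamma-1}u_\delta\|_{H^{1-2s}}$ via \eqref{es76}--\eqref{es77}. To pass to the limit in this representation you need strong convergence of $|u_\delta|^{m_2-1}u_\delta$ and $|u_\delta|^{m_1}$ in the relevant \emph{fractional} spaces, not merely in $L^2$. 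The paper gets this by first establishing $|u_\delta|^{\frac{\gamma+p-1}{2}-1}u_\delta\to|u|^{\frac{\gamma+p-1}{2}-1}u$ strongly in $L^2(0,T;H^{1-s-\varepsilon_0}(\Omega))$ for every $\varepsilon_0>0$ (interpolate your strong $L^2$ convergence against the uniform $H^{1-s}$ bound), and then feeding this into \eqref{es76}--\eqref{es77}. With that correction your argument is complete and coincides with the paper's.
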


\begin{proof}
From \eqref{main-es2'} and \eqref{main-es3}, we have
\begin{align}\label{es29}
\int_{\Omega_T}| (-\Delta)^{\frac{1-s}{2}}(|u_\delta|^{\frac{\gamma+p-1}{2}-1}u_\delta)|^2+||u_\delta||_{L^\infty(\Omega_T)}\leq C~~\forall~~p>1.
\end{align}
Set $
E_\delta:=\operatorname{div}(|u_\delta|^{m_1}\nabla (-\Delta)^{-s}(|u_\delta|^{m_2-1}u_\delta)).$
We prove that
\begin{align}\label{es28}
||E_\delta||_{L^2(0,T;(H^1_0(\Omega)\cap W^{2-\vartheta,r}(\Omega))^{*})}\leq C ~~ \text{for some } ,\vartheta\in (0,1),r\in (1,2).
\end{align}
Indeed, if $s\geq 1/2$, it is easy to find \eqref{es28} since
\begin{align*}
|||u_\delta|^{m_1}\nabla (-\Delta)^{-s}(|u_\delta|^{m_2-1}u_\delta)||_{L^2(\Omega_T)}\leq C.
\end{align*}
If $s<1/2$, we deduce from \eqref{es32*} in Lemma \eqref{le-f} below that
\begin{align*}
||E_\delta||_{L^2(0,T;(H^1_0(\Omega)\cap W^{2-\vartheta,r}(\Omega))^{*})}\leq |||u_\delta|^{\gamma-1}u_\delta||_{L^2(0,T,H^{1-2s}(\Omega))}\overset{\eqref{es29}}\leq C.
\end{align*}
It follows \eqref{es28}.
Hence, from \eqref{es28} and \eqref{es29} we have
\begin{align*}
||\partial_tu_\delta||_{L^2(0,T;(H^1_0(\Omega)\cap W^{2-\vartheta,r}(\Omega))^{*})}+\int_{\Omega_T}| (-\Delta)^{\frac{1-s}{2}}(|u_\delta|^{\frac{\gamma+p-1}{2}-1}u_\delta)|^2+||u_\delta||_{L^\infty(\Omega_T)}\leq C
\end{align*}
for some $r\in (1,2)$.
	By Lemma \ref{Simonlemma},, there exists a subsequence of $\{u_\delta\}$ converging to $u$ in $L^1(\Omega_T)$ as $\delta\to0$. Moreover, $u$ satisfies the properties stated in Lemmas \ref{le-1}, \ref{le-2}, \ref{le-3}, \ref{le-4},\ref{le-5} and \ref{le-6} with $\delta=0$. and
	\begin{align*}
	|u_\delta|^{\frac{\gamma+p-1}{2}-1}u_{\delta}\to |u|^{\frac{\gamma+p-1}{2}-1}u~~ L^2(0,T;H^{1-s-\varepsilon_0}(\Omega))~\forall~~\varepsilon_0>0, p>1.
	\end{align*}
From proof of Lemma \ref{le-f}, we see that
\begin{align*}
\operatorname{div}(|u_\delta|^{m_1}\nabla  (-\Delta)^{-s}  |u_\delta|^{m_2-1}u_\delta)\to \operatorname{div}(|u|^{m_1}\nabla  (-\Delta)^{-s}  |u|^{m_2-1}u)~~
\end{align*}
in $L^2(0,T;(H^1_0(\Omega)\cap W^{2-\vartheta,r}(\Omega))^{*})$ for some $\vartheta\in (0,1),r\in (1,\infty)$.
Thus, for $\varphi\in C_c^1([0,T),(W_0^{2,\infty}(\Omega))^*)$
\begin{align*}
\int_{\Omega_T}f\varphi+\int_{\Omega}u_0\varphi&=\int_{\Omega_T}(-\varphi_t-\delta\Delta \varphi)u_\delta+	\int_{\Omega_T}E_\delta\varphi \\&\to  \int_{\Omega}-\varphi_tu+	\int_0^{T}\langle\operatorname{div}(|u|^{m_1}\nabla (-\Delta)^{-s}(|u|^{m_2-1}u)),\varphi\rangle dt.
\end{align*}
which implies that $u$ satisfies
\begin{align*}
\int_{\Omega}-\varphi_tu dxdt+	\int_0^{T}\langle\operatorname{div}(|u|^{m_1}\nabla (-\Delta)^{-s}(|u|^{m_2-1}u)),\varphi\rangle dt=\int_{\Omega_T}f\varphi dxdt+\int_{\Omega}u_0\varphi dxdt,
\end{align*}
for all $\varphi\in C_c^1([0,T),(W_0^{2,\infty}(\Omega))^*)$. Hence, $u$
 is a solution of problem \eqref{pro1}. The proof is complete.
 \end{proof}


\section{Universal bound}
\label{sec.univbdd}

The property of universal boundedness depends on general arguments that we stress here because of their possible use in other settings. We recall that we consider equations with zero right-hand side.
Suppose we have already proved the a priori estimate
$$
\|u(t)\|_\infty\le C \|u_0\|_1^{\alpha} t^{-\beta},
$$
for some exponents $\alpha,\beta>0$, where a constant $C$ that does not depend on the data, it depends only on $N,s$ and $\Omega$. Suppose the equation has the following {\sl Invariance Property: If $u(x,t)$ is a solution in our admissible class,  so is}
\begin{equation}\label{transf}
u_k(x,t)= ku(x,k^{\gamma-1}t).
\end{equation}
In the case of our model \eqref{pro1} the result holds and $\gamma$ depends only on the powers of the equation, actually $\gamma=m_1+m_2$. We need to assume that $\gamma>1$.

\begin{proposition} Under those assumptions  we get the universal estimate
\begin{equation}\label{univ.est}
||u(t)||_{L^\infty(\Omega)}\le C_1(N,s,\gamma,\Omega) t^{-1/(\gamma-1)}\,
\end{equation}
valid for all solutions that we have constructed.
\end{proposition}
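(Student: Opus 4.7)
The strategy is to exploit three ingredients: (i) the smoothing estimate $\|u(t)\|_\infty \le C\|u_0\|_1^\alpha t^{-\beta}$, (ii) the scaling invariance $u \mapsto u_k$, and (iii) the fact that $\Omega$ has finite measure. A first observation is that the two assumptions of the proposition are compatible only if $\alpha+(\gamma-1)\beta=1$, so that the quantity $\sigma := \beta/(1-\alpha)$ coincides with the exponent $1/(\gamma-1)$ announced in the statement. Indeed, applying the smoothing estimate to $u_k$ gives $\|u(k^{\gamma-1}t)\|_\infty \le Ck^{\alpha-1}\|u_0\|_1^\alpha t^{-\beta}$; comparing this with the direct smoothing of $u$ at time $k^{\gamma-1}t$, and letting $k\to 0$ or $k\to\infty$, shows that both bounds are compatible with nontrivial solutions only if $\alpha-1+(\gamma-1)\beta=0$.

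Next, since $f=0$ the equation is time-autonomous and the constructed solutions satisfy the semigroup property, so the smoothing estimate may be applied with any intermediate time $s>0$ as the new initial time: $\|u(s+t)\|_\infty \le C\|u(s)\|_1^\alpha t^{-\beta}$. Invoking the bounded-domain embedding $\|u(s)\|_1 \le |\Omega|\,\|u(s)\|_\infty$ upgrades this to
\[
\|u(s+t)\|_\infty \le C|\Omega|^\alpha \|u(s)\|_\infty^\alpha t^{-\beta}.
\]
Specialising to $s=t=T/2$ and multiplying by $T^\sigma$, the identity $\sigma(1-\alpha)=\beta$ yields the key functional recursion
\[
F(T) \le D\, F(T/2)^\alpha, \qquad T>0,
\]
where $F(T):=T^\sigma\|u(T)\|_\infty$ and $D = C|\Omega|^\alpha 2^{\beta+\alpha\sigma}$ depends only on $N,s,\gamma,\Omega$.

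Iterating $n$ times produces
\[
F(T)\le D^{(1-\alpha^n)/(1-\alpha)}\, F(T/2^n)^{\alpha^n}.
\]
The geometric prefactor converges to the universal constant $D^{1/(1-\alpha)}$, so it suffices to show $F(T/2^n)^{\alpha^n}\to 1$ as $n\to\infty$. For this we appeal once more to the smoothing estimate, now at time $T/2^n$: $F(T/2^n)\le C\|u_0\|_1^\alpha (T/2^n)^{\sigma-\beta}$, which is finite for every $n$ (and in fact tends to $0$, since $\sigma-\beta = \alpha\sigma>0$). Because $\alpha\in(0,1)$, the relation $M^{\alpha^n}\to 1$ holds for every finite $M$, so any data-dependent bound on $F(T/2^n)$ collapses to $1$ in the limit. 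Passing to $n\to\infty$ yields $F(T)\le D^{1/(1-\alpha)}$, i.e., the universal bound
\[
\|u(T)\|_\infty \le C_1\, T^{-1/(\gamma-1)}, \qquad C_1 := D^{1/(1-\alpha)} = C_1(N,s,\gamma,\Omega).
\]

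The main obstacle is precisely this last limit: one must ensure no trace of $\|u_0\|_1$ survives. This is where $\alpha < 1$ is essential --- it forces any fixed bound $M<\infty$ to satisfy $M^{\alpha^n}\to 1$, thereby washing out the data dependence and leaving only the universal attractor $D^{1/(1-\alpha)}$. The hypothesis $\gamma>1$ enters through the scaling identity (making $\sigma>0$) and guarantees that the resulting bound is a genuine decay in time.
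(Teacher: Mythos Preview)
Your argument is correct and takes a genuinely different route from the paper's. The paper proceeds by a dyadic cascade on the $L^1$ norm: starting from normalised data $\|u_0\|_{L^1}\le 1$ it finds a fixed waiting time $t_1$ after which $\|u\|_\infty$ drops below $\tfrac{1}{2(|\Omega|+1)}$, then uses the scaling $u_k=ku(\cdot,k^{\gamma-1}\cdot)$ with $k=2^{-j}$ to treat data of size $2^j$, halving the $L^1$ norm after each rescaled waiting time $t_j=2^{-j(\gamma-1)}t_1$; summing the geometric series gives a universal time $T_\infty$ by which every solution satisfies $\|u\|_\infty\le 1$, and a final rescaling yields \eqref{univ.est}. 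You instead first extract from the scaling the exponent identity $\alpha+(\gamma-1)\beta=1$ (correctly arguing that otherwise all constructed solutions vanish and the bound is trivial), then combine the semigroup property with $\|\cdot\|_1\le |\Omega|\,\|\cdot\|_\infty$ to obtain the contraction-type recursion $F(T)\le D\,F(T/2)^\alpha$ for $F(T)=T^{1/(\gamma-1)}\|u(T)\|_\infty$, whose iterates converge to the fixed point $D^{1/(1-\alpha)}$ because $\alpha<1$ wipes out the initial datum. Your approach is shorter and identifies the mechanism (a strictly sublinear self-map with a unique attractor) very cleanly; the paper's approach is more hands-on and makes the universal time $T_\infty$ explicit. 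Note that your appeal to the semigroup property is exactly the paper's ``displacing the origin of time'': since uniqueness is not available for the limit problem, both arguments are really carried out at the level of the approximating solutions $u_\delta$, where time-autonomy is unproblematic, and then passed to the limit.
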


{\sl Proof.} (i) We begin with an initial data bounded above by constant $1$. Since $\Omega$ is bounded this datum is in $L^1(\Omega)$. We and use the a priori estimate to find a time $t_1=t_1$ such that
$$
||u(t)||_{L^\infty(\Omega)}\le \frac{1}{2(|\Omega|+1)} \quad \forall \ t\ge t_1.
$$

(ii)  Let us now apply the result to data with an estimate $||u_0||_{L^1(\Omega)}\le 2^j$. We define the new solution $u_k(x,t)=ku(x,k^{\gamma-1}t)$ with $k=2^{-j}$ and apply the previous step to show
that $$||u_k(t_1)||_{L^\infty(\Omega)}\le \frac{1}{2(|\Omega|+1)},$$ hence
$$
||u(t_j)||_{L^\infty(\Omega)}\le \frac1{2k(|\Omega|+1)}=\frac{2^{j-1}}{|\Omega|+1}\,,
$$
and
\begin{align*}
||u(t_j)||_{L^1(\Omega)}\le \frac{2^{j-1}|\Omega|}{|\Omega|+1}< 2^{j-1},
\end{align*}
when we put  $t_j=k^{\gamma-1} t_1=2^{-j(\gamma-1)}t_1$. We may now apply iteratively the argument after displacing the origin of time and get
\begin{align*}
||u(t_{j}+t_{j-1})||_{L^\infty(\Omega)}\le \frac{2^{j-2}}{|\Omega|+1},~~||u(t_{j}+t_{j-1})||_{L^1(\Omega)}\le 2^{j-2}
\end{align*}
so that
$$
||u(T_j)||_{L^\infty(\Omega)}\le \frac{2^{-1}}{|\Omega|+1} \leq 1\quad \mbox{for} \ T_j=\sum_{i=1}^{j}t_i=\sum_{i=1}^j  2^{-i(\gamma-1)}t_1=C(\gamma)t_1\,.
$$
the conclusion is that for data less than $2^j$ we need to wait $T_j$ seconds to get the bound $||u||_{L^\infty(\Omega)}\le 1$.

(iii) Consider now a general initial datum $u_0$, not necessarily integrable or bounded.
We approximate from below by bounded data and conclude that there is a limit solution
with the estimate
$$
||u(t)||_{L^\infty(\Omega)}\le 1 \quad \mbox{for some} \ t\leq  T_\infty:=\sum_{i=1}^\infty  2^{-j(\gamma-1)}t_1=\frac{t_1}{2^{\gamma-1}-1}.
$$
So,
$$
||u(t)||_{L^\infty(\Omega)}\le 1 \quad \mbox{for all} \ t\geq  T_\infty.
$$
 This estimate should be valid for all our constructed solutions. This is a particular case of the universal estimate.

(iv) To get  estimate \eqref{univ.est} for any $t=t_2>0$ fixed, use again the scaling $
u_k(x,t)=ku(x,k^{\gamma-1}t)$, now with $k^{\gamma-1}T_\infty=t_2 $ to get
$$
||u(t_2)||_{L^\infty(\Omega)}=(1/k)||u_k(T_\infty)||_{L^\infty(\Omega)}\le 1/k= (T_\infty /t_2)^{1/(\gamma-1)}\,.
$$
The estimate follows with $C_1=T_\infty^{1/(\gamma-1)}$. \hfill $\square$

\begin{remark}The result is not true for $m_1+m_2\le 1$ as many particular cases show.   Thus, when $m_1+m_2= 1$ any multiple of a solution is still a solution so that no a priori estimate may exist independent of the size of the initial data. For $m_1+m_2\leq1$ we have  the transformation \eqref{transf} but now with $\gamma-1\leq 0$. Suppose for contradiction that we have an a universal priori estimate
	$$
	||u(t)||_{L^\infty(\Omega)}\le C\,F(t)
	$$
	with $C$ a universal constant and $F(t)>0$ and nonincreasing. We consider $u_k(.,t)=ku(.,k^{\gamma-1}t)$. Then,
	$$
	k|| u(t)||_{L^\infty(\Omega)}= ||u_k(k^{1-\gamma}t)||_{L^\infty(\Omega)}\leq C F(k^{1-\gamma}t)\leq C F(t)~~\forall~~k\geq 1.
	$$
	Letting $k\to\infty$, we find the contradiction. We recall that sharp asymptotics in those cases have been explored for the fast diffusion equation $u_t-\Delta(|u|^{\gamma-1}u)=0$ and also the fractional porous medium $u_t+(-\Delta)^s (|u|^{\gamma-1}u)=0$. Phenomena of extinction in finite time occur.
\end{remark}


\section{Existence of solutions  with bad data}\label{sec.bad}

In this section, we establish the existence of solutions to Problem \eqref{pro1} with bad data.

\begin{theorem}[Distributional  data]  \label{pro-Dis-data}Let $f\in L^2(0,T;H^{-1+s}(\Omega))$ and  $u_0\in L^{\gamma+1}(\Omega)$. Then, Problem \eqref{pro1} admits a weak  solution $u\in C(0,T;L^{\gamma+1}(\Omega))$ satisfying
	\begin{align}
	\sup_{t\in (0,T)}\int_\Omega |u(t)|^{\gamma+1}+\int_{0}^{T}\int_{\Omega}| (-\Delta)^{\frac{1-s}{2}}(|u|^{\gamma})|^2
	\leq C \int_\Omega |u_0|^{\gamma+1}+ C\int_{0}^{T}\int_{\Omega} |(-\Delta)^{-\frac{1-s}{2}}f|^2\,.
	\end{align}
Moreover, when $f=0$ the Universal Bound \eqref{univ.bound} holds for these solutions.
\end{theorem}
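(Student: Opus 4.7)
The plan is to construct the solution as a limit of the bounded-data solutions produced by Theorem \ref{mainthm}. First I would pick approximating sequences $u_{0,n}\in L^\infty(\Omega)\cap L^{\gamma+1}(\Omega)$ and $f_n\in L^\infty(\Omega_T)$ (say by truncation and convolution) such that $u_{0,n}\to u_0$ in $L^{\gamma+1}(\Omega)$ and $f_n\to f$ in $L^2(0,T;H^{-1+s}(\Omega))$, with the additional control $\|(-\Delta)^{-(1-s)/2}f_n\|_{L^2(\Omega_T)}\le \|(-\Delta)^{-(1-s)/2}f\|_{L^2(\Omega_T)}+o(1)$. Theorem \ref{mainthm} then yields weak solutions $u_n$ of \eqref{pro1} with data $(u_{0,n},f_n)$ that are bounded, and all the estimates of Lemmas \ref{le-1}--\ref{le-6} are available.

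The core of the argument is the $L^{\gamma+1}$ energy estimate \eqref{main-es2''"} of Lemma \ref{le-2}: applied with $\delta=0$ it gives, uniformly in $n$,
\begin{equation*}
\sup_{t\in(0,T)}\int_\Omega |u_n(t)|^{\gamma+1}+\int_0^T\!\!\int_\Omega \bigl|(-\Delta)^{\frac{1-s}{2}}(|u_n|^{\gamma-1}u_n)\bigr|^2\,dx\,dt\le C\int_\Omega|u_{0,n}|^{\gamma+1}+C\int_0^T\!\!\int_\Omega |(-\Delta)^{-\frac{1-s}{2}}f_n|^2.
\end{equation*}
In particular, $|u_n|^{\gamma-1}u_n$ is bounded in $L^2(0,T;H^{1-s}(\Omega))$, hence (since $s<1$) in $L^2(0,T;H^{(1-2s)^+}(\Omega))$. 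Lemma \ref{le-f} then controls $\operatorname{div}(|u_n|^{m_1}\nabla(-\Delta)^{-s}(|u_n|^{m_2-1}u_n))$ in $L^2(0,T;(H^1_0(\Omega)\cap W^{2-\vartheta,r}(\Omega))^*)$, and together with the bound on $f_n$ in $L^2(0,T;H^{-1+s}(\Omega))\hookrightarrow L^2(0,T;(W^{2,\infty}_0(\Omega))^*)$ this gives uniform control of $\partial_t u_n$ in the same negative-order space.

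Next I would apply the Simon-type compactness Lemma \ref{Simonlemma} (with, say, $\alpha_1=\gamma$, $\alpha_2$ slightly less than $1-s$, $q=\gamma+1$) to extract a subsequence $u_n\to u$ strongly in $L^1(\Omega_T)$, and in fact in $L^p(\Omega_T)$ for any $p<\gamma+1$ by interpolation. Weak convergence of $|u_n|^{\gamma-1}u_n$ in $L^2(0,T;H^{1-s}(\Omega))$ and of $u_n$ in $L^\infty(0,T;L^{\gamma+1}(\Omega))$ identify the limits, and as in the proofs of Lemma \ref{le-f} and Proposition \ref{step3}, the nonlinear diffusion term can be passed to the limit against test functions $\varphi\in C_c^2(\Omega\times[0,T))$ using the product of a weakly convergent factor with a strongly convergent one in the dual pairing. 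Lower semicontinuity under weak limits then transfers the energy inequality to $u$, and continuity in time $u\in C(0,T;L^{\gamma+1}(\Omega))$ follows from the energy identity and an Aubin--Lions argument.

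For the universal bound in the case $f=0$, each $u_n$ is one of the constructed solutions of Theorem \ref{mainthm}, so Theorem \ref{mainthm2} applies and yields $\|u_n(t)\|_{L^\infty(\Omega)}\le C t^{-1/(\gamma-1)}$ with $C$ independent of $n$; this passes to the limit and gives \eqref{univ.bound}. The main technical obstacle I anticipate is not the apriori bounds themselves but justifying strong-enough convergence of $|u_n|^{m_2-1}u_n$ and $|u_n|^{m_1}$ to pass to the limit in the degenerate-and-nonlocal product $|u_n|^{m_1}\nabla(-\Delta)^{-s}(|u_n|^{m_2-1}u_n)$; this is handled exactly as in Lemma \ref{le-f}, by splitting the fractional power between the two factors (with $\beta\in(s,1/2)$ chosen so that $(1-2s)m_i/\gamma$ lands in the allowed range) and using strong $L^p$ convergence of the lower-order factor against weak $L^2$ convergence of $(-\Delta)^{\beta-s}(|u_n|^{m_2-1}u_n)$.
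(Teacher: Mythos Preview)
Your proposal is correct and follows essentially the same route as the paper: approximate the data by bounded ones, invoke the energy estimate \eqref{main-es2''"} of Lemma~\ref{le-2} uniformly in $n$, bound the nonlocal divergence via Lemma~\ref{le-f}, obtain compactness through Lemma~\ref{Simonlemma}, and pass to the limit. The only minor refinement in the paper is that from the uniform $H^{1-s}$ bound it records \emph{strong} convergence $|u_n|^{\gamma-1}u_n\to |u|^{\gamma-1}u$ in $L^2(0,T;H^{1-s-\varepsilon_0}(\Omega))$ for every $\varepsilon_0>0$, which is what actually drives the limit in the nonlinear term (rather than a weak/strong pairing); your outline is compatible with this once the compactness is used to upgrade the fractional-Sobolev convergence.
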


\begin{proof}[Proof of Theorem \ref{pro-Dis-data}] Let $u_k$ be a solution of problem \eqref{pro1} in Theorem \ref{mainthm} with $u_{0}=T_k(u_0)$ and $f=f_k\in L^\infty(\Omega)$ such that $f_k\to f$ in $L^2(0,T;H^{-1+s}(\Omega))$ and
	$$\int_{0}^{t}\int_{\Omega} |(-\Delta)^{-\frac{1-s}{2}}f_k|^2\leq 2 \int_{0}^{t}\int_{\Omega} |(-\Delta)^{-\frac{1-s}{2}}f|^2.$$
We have  from \eqref{main-es2''"} of Lemma \ref{le-2} \begin{align*}
\sup_{t\in (0,T)}	\int_\Omega |u_k(t)|^{\gamma+1}+\int_{0}^{T}\int_{\Omega}| (-\Delta)^{\frac{1-s}{2}}(|u_k|^{\gamma-1}u_k)|^2
	&\leq C \int_\Omega |T_k(u_0)|^{\gamma+1}+ C\int_{0}^{t}\int_{\Omega} |(-\Delta)^{-\frac{1-s}{2}}f_k|^2\\&\leq C \int_\Omega |u_0|^{\gamma+1}+ C\int_{0}^{t}\int_{\Omega} |(-\Delta)^{-\frac{1-s}{2}}f|^2.
	\end{align*}
	So, by \eqref{es32*} in Lemma \ref{le-f}, we have
	\begin{align}
	||\operatorname{div}(|u_k|^{m_1}\nabla  (-\Delta)^{-s}  |u_k|^{m_2-1}u_k)||_{L^2(0,T;(H^1_0(\Omega)\cap W^{2-\vartheta,r}(\Omega))^{*})}\leq C ~~ \text{for some }\vartheta\in (0,1),r\in (1,\infty)
	\end{align}
	Thus,
	\begin{align*}
	||\partial_tu_k||_{L^{\min\{\gamma+1,2\}}(0,T;(H^1_0(\Omega)\cap W^{2-\vartheta,r}(\Omega))^{*})}+|||u_k|^{\gamma-1}u_k||_{L^2(0,T;H^{1-s}(\Omega))}+||u_k||_{L^\infty(0,T;L^{\gamma+1}(\Omega))}\leq C,
	\end{align*}
	for some $\vartheta\in (0,1),r\in (1,\infty)$.
	By Lemma \ref{Simonlemma}, there exists a subsequence of $\{u_\delta\}$ converging to $u$ in $L^1(
\Omega_T)$ as $\delta\to0$. Moreover, $u$ satisfies the properties stated in Lemmas \ref{le-1}, \ref{le-2}, \ref{le-3}, \ref{le-4},\ref{le-5} and \ref{le-6} with $\delta=0$. and the Universal bound \eqref{univ.est} and
	\begin{align*}
	|u_k|^{\gamma-1}u_k\to |u|^{\gamma-1}u~~\text{in}~~ L^2(0,T;H^{1-s-\varepsilon_0}(\Omega))~\forall~~\varepsilon_0>0.
	\end{align*}
	From proof of Lemma \ref{le-f}, we see that
	\begin{align*}
	\operatorname{div}(|u_k|^{m_1}\nabla  (-\Delta)^{-s}  |u_k|^{m_2-1}u_k)\to \operatorname{div}(|u|^{m_1}\nabla  (-\Delta)^{-s}  |u|^{m_2-1}u),~~
	\end{align*}
	in $L^1(0,T;W^{-2,r}(\Omega))$ for some $r\in (1,2)$. It is easy to check that $u$ is a weak solution of problem \eqref{pro1}. The proof is complete.
\end{proof}

We need a new definition of solution when the data are measures.

	\begin{definition} Let $\mu\in \mathcal{M}_b(\Omega_T),\sigma\in \mathcal{M}_b(\Omega)$. We say that $u$ is a distribution solution  of problem \eqref{pro1} with $(f,u_0)=(\mu,\sigma)$, if\\
		(i) $u\in L^{1}(\Omega_T)$,\\ (ii) $\chi\operatorname{div}(|u|^{m_1}\nabla (-\Delta)^{-s}|u|^{m_2-1}u)\in L^1(0,T,(W_0^{2,\infty}(\Omega))^*)$ for any $\chi\in C^\infty_c(\Omega\times[0,T)$ and
		\begin{align*}
		-\int_{0}^{T}\int_\Omega u\phi_tdxdt-\int_{0}^{T} \langle \operatorname{div}(|u|^{m_1}\nabla (-\Delta)^{-s}|u|^{m_2-1}u),\varphi\rangle dt=\int_\Omega\phi(0)d\sigma+\int_{0}^{T}\int_\Omega d\mu
		\end{align*}
		for all $\phi\in C_c^2(\Omega\times[0,T))$.
	\end{definition}
Here and in what follows, we denote by
$\mathcal{M}_b(D)$,  the set of bounded Radon measures in a set $D$.
We can state the following  theorem.

	\begin{theorem}[Measure data] \label{pro-measuredata}Let $\mu\in \mathcal{M}_b(\Omega_T)$ and $\sigma\in\mathcal{M}_b(\Omega)$. Assume that  $\gamma>1-\frac{2(1-s)}{N}$.Then, the Problem \eqref{pro1} admits a distribution solution satisfying
	\begin{align}
&1_{s<1-\frac{N}{2}}||u||_{L^{\gamma+\frac{2(1-s)}{N},\infty}(\Omega_T)}+1_{s=1-\frac{N}{2}}||u||_{L^{\gamma+1-\frac{1}{l},\infty}(\Omega_T)}+1_{s>1-\frac{N}{2}}
||u||_{L^{\gamma+1,\infty}(\Omega_T)}\\&\nonumber\leq C 1_{s<1-\frac{N}{2}}M^{\frac{N+2(1-s)}{\gamma N-2(1-s)}}+C1_{s=1-\frac{N}{2}}M^{\frac{2l}{l(\gamma+1)-1}}+C1_{s>1-\frac{N}{2}}M^{\frac{2}{\gamma+1}},
\end{align}
for all $l>1$ and
\begin{align}
\int_{0}^T\int_\Omega|(-\Delta)^{\frac{1-s}{2}}\left(\frac{|u|^{\frac{\gamma}{2}+\theta-1}u}{|u|^{2\theta}+1}\right)|^2dxdt\leq C_\theta M~~ \forall~~\theta>0
\end{align}
where $M=||u_0||_{\mathcal{M}_b(\Omega)}+||f||_{\mathcal{M}_b(\Omega_T)}$.		
Moreover, the  Smoothing Effect \eqref{sm.eff} and the Universal Bound \eqref{univ.bound} holds for these solutions.
	\end{theorem}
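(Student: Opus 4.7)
\textbf{Proof plan for Theorem \ref{pro-measuredata}.}

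My strategy is approximation: construct sequences $\{\sigma_n\}\subset L^\infty(\Omega)$ and $\{\mu_n\}\subset L^\infty(\Omega_T)$ with $\sigma_n \rightharpoonup \sigma$ and $\mu_n \rightharpoonup \mu$ in the weak-$\star$ sense of measures, respecting the total-variation control $\|\sigma_n\|_{L^1(\Omega)}\le\|\sigma\|_{\mathcal{M}_b(\Omega)}$ and $\|\mu_n\|_{L^1(\Omega_T)}\le\|\mu\|_{\mathcal{M}_b(\Omega_T)}$ (e.g.\ by convolution with a standard mollifier followed by truncation). Theorem \ref{mainthm} produces a corresponding weak solution $u_n$ for each pair. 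Setting $M_n=\|\sigma_n\|_{L^1}+\|\mu_n\|_{L^1}\le M$, the estimates \textbf{(I)}, \textbf{(Ia)}, \textbf{(Ib)} in Theorem \ref{mainthm} supply, uniformly in $n$, the $L^\infty_t L^1_x$ bound, the weak $L^p$ bounds appearing in the statement (here the hypothesis $\gamma>1-\frac{2(1-s)}{N}$ is used precisely to guarantee that the exponent on the left of the trichotomy exceeds $1$, making the weak-$L^p$ norm meaningful), and the $H^{1-s}$ estimate for the truncations $w_n^\theta:=\frac{|u_n|^{\gamma/2+\theta-1}u_n}{|u_n|^{2\theta}+1}$.

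The next task is to extract a limit. The weak-$L^p$ bound gives $\{u_n\}$ bounded in $L^q(\Omega_T)$ for every $q<p$ (some $q>1$). To apply Lemma \ref{Simonlemma} I need a uniform bound on $\partial_t u_n$ in a suitable dual space. Rewriting the equation and invoking Lemma \ref{le-f} to control $\operatorname{div}(|u_n|^{m_1}\nabla(-\Delta)^{-s}(|u_n|^{m_2-1}u_n))$ in $L^1(0,T;(W_0^{2,\infty}\cap W^{2-\vartheta,r})^{*})$ by $\||u_n|^{\gamma-1}u_n\|_{L^1_t H^{(1-2s)^+}}$, which in turn is bounded via \textbf{(Ib)} (passing from the truncations $w_n^\theta$ to $|u_n|^{\gamma-1}u_n$ through the standard dyadic decomposition used in the proof of Lemma \ref{le-1}, step 2, combined with Fatou as $\theta\downarrow 0$), together with the trivial bound $\|\mu_n\|_{L^1}$, gives the desired $\partial_t$ bound. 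Simon's lemma (or the $q=1$ variant in Remark \ref{Simon-re}) then yields, along a subsequence, $u_n\to u$ in $L^\theta(\Omega_T)$ for every $\theta<1$, and a.e.\ convergence.

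The main obstacle is passage to the limit in the nonlinear flux. I would write $|u_n|^{m_1}\nabla(-\Delta)^{-s}(|u_n|^{m_2-1}u_n)$ in the duality form of the proof of Proposition \ref{step3} (splitting the exponent between $m_1$ and $m_2$ via the auxiliary parameter $\beta$ of Lemma \ref{le-f}), so that the required convergence reduces to strong convergence of $|u_n|^{m_2-1}u_n$ in $L^2_t H^{2(\beta-s)}$ and of $|u_n|^{m_1}$ in $L^2_t H^{1-2\beta}$ on every set $\{|u_n|\le k\}$. Combined with a.e.\ convergence, the $L^\infty$ bound on truncations from \textbf{(Ib)} (again through the $w_n^\theta$), and dominated convergence outside the sets $\{|u_n|>k\}$ whose measure is controlled uniformly by the weak-$L^p$ estimate, one passes to the limit in the distributional formulation. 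Finally, the lower semicontinuity of the weak-$L^p$ norms and of the norm in \textbf{(Ib)} transfers the quantitative estimates to $u$; the Smoothing Effect \eqref{sm.eff} and the Universal Bound \eqref{univ.bound} transfer automatically because they do not depend on the data norms: applied to $u_n$ at any time $t_0>0$, they yield $u_n(\cdot,t_0)\in L^\infty$ with bounds independent of $n$, and the constructed limit inherits them by a.e.\ convergence.
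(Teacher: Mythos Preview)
Your overall architecture (approximation by bounded data, uniform a priori bounds from Theorem~\ref{mainthm}, Simon-type compactness, passage to the limit) matches the paper's, but the central step of your plan has a genuine gap.

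You propose to bound $\partial_t u_n$ by invoking Lemma~\ref{le-f}, which requires a uniform control on $\||u_n|^{\gamma-1}u_n\|_{L^1_t H^{(1-2s)^+}}$, and you claim this follows from \textbf{(Ib)} by ``passing from the truncations $w_n^\theta$ to $|u_n|^{\gamma-1}u_n$ through the dyadic decomposition \dots\ combined with Fatou as $\theta\downarrow 0$''. This does not work: the constant $C(\theta)$ in \textbf{(Ib)} blows up as $\theta\to 0$ (in the proof of Lemma~\ref{le-1} it comes from $\sum_{j\in\mathbb Z}\frac{2^{j\theta}}{(2^j+1)^{2\theta}}$, which diverges), so Fatou gives nothing. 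Moreover, even granting a bound on $|u_n|^{\gamma/2-1}u_n$ in $H^{1-s}$, Lemma~\ref{le-f} asks for $|u_n|^{\gamma-1}u_n$, a different power, and the truncated estimate \textbf{(Ib)} alone cannot produce that uniformly.

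The paper closes this gap by a genuinely additional argument you are missing. First, the hypothesis $\gamma>1-\tfrac{2(1-s)}{N}$ is used not merely to make the weak-$L^p$ exponent exceed $1$, but to ensure it exceeds $\max\{\gamma,1\}$, yielding $\|u_n\|_{L^q(\Omega_T)}\le C$ for some $q>\max\{\gamma,1\}$. This extra integrability is then combined with \textbf{(Ib)} through a pointwise inequality and H\"older to obtain the \emph{untruncated} estimate $\||u_n|^{\gamma/2-1}u_n\|_{L^2_tH^{1-s-\varrho}}\le C$ for every $\varrho>0$ (this is \eqref{es47'}/\eqref{es56} in the paper). Second, the paper does \emph{not} apply Lemma~\ref{le-f} globally; instead it localizes with interior cutoffs $\chi_k$ (supported in $\Omega\setminus\Omega_{1/k}$) and uses Lemma~\ref{fractional-lem}, whose hypotheses require compact support away from $\partial\Omega$. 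The divergence term is split into a local piece $I_1$ (controlled via \eqref{es56} and a case analysis on $m_1\lessgtr m_2$) and a far piece $I_2$ (controlled by smoothing of the Green operator). Compactness is then applied to $\chi_k u_n$, not to $u_n$ directly. Your plan skips both of these ingredients.
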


\begin{proof}[Proof of Theorem \ref{pro-measuredata}]  Let $\sigma_n,\mu_n$ be in  $L^\infty(\Omega)$ and $L^\infty(\Omega_T)$ converging weakly to $\sigma$ and $\mu$ in $\mathcal{M}_b(\Omega)$ and $\mathcal{M}_b(\Omega_T)$ such that
	\begin{align}
	|\sigma_n|(\Omega)\leq |\sigma|(\Omega),~~|\mu_n|(\Omega_T)\leq |\mu|(\Omega_T)~~\forall~~k\in\mathbb{N}.
	\end{align}
	Let $u_n$ be a solution of problem \eqref{pro1} in Theorem \ref{mainthm} with $u_0=\sigma_n$ and $f=\mu_n$.
	We have
	\begin{align} \label{es50}
	&1_{s<1-\frac{N}{2}}||u_n||_{L^{\gamma+\frac{2(1-s)}{N},\infty}(\Omega_T)}+1_{s=1-\frac{N}{2}}||u_n||_{L^{\gamma+1-\frac{1}{r},\infty}(\Omega_T)}+1_{s>1-\frac{N}{2}}||u_n||_{L^{\gamma+1,\infty}(\Omega_T)}\leq C
	\end{align}
	and
	\begin{align}
	|||u_n||_{L^\infty(0,T,L^1(\Omega))}\leq  C,
	\end{align}
	\begin{align}\label{es47}
	\int_{0}^T\int_\Omega|(-\Delta)^{\frac{1-s}{2}}\left(\frac{|u_n|^{\frac{\gamma}{2}+\theta-1}u_n}{|u_n|^{2\theta}+1}\right)|^2dxdt\leq C~~ \forall~~\theta>0
	\end{align}
	Since $\gamma>1-2(1-s)/N$, then we obtain from \eqref{es50}
	\begin{align}\label{es51}
	||u_n||_{L^{q}(\Omega_T)}\leq C ~~\text{for some }~~q>\max\{\gamma,1\}.
	\end{align}
	Now, we prove that for that for any $\epsilon\in (0,(1-s)/2)$,
	\begin{align}\label{es47'}
	\int_{0}^T\int_\Omega|(-\Delta)^{\frac{1-s}{2}-\epsilon} \left(|u_n|^{\frac{\gamma}{2}+\epsilon_0-1}u_n\right)|^2dxdt\leq C,
	\end{align}
	for some $\epsilon_0>0$. In particular, for any $\varrho>0$
	\begin{align}\label{es56}
	|||u_n|^{\frac{\gamma}{2}-1}u_n||_{L^2(0,T;H^{1-s-\varrho}(\Omega))}\leq C~~\forall ~n.
	\end{align}
	Indeed, it is not hard to check that for any $\theta_0\in (0,\gamma/10)$
	\begin{align*}
	\left|\frac{|x_1|^{\frac{\gamma}{2}+\theta-1}x_1}{|x_1|^{2\theta}+1}-\frac{|x_2|^{\frac{\gamma}{2}+\theta-1}x_2}{|x_2|^{2\theta}+1}\right|\geq C(\theta,\theta_0,\gamma) \frac{||x_1|^{\frac{\gamma}{2}-\theta_0-1}x_1-|x_2|^{\frac{\gamma}{2}-\theta_0-1}x_2|^{\frac{\gamma+2\theta}{\gamma-2\theta_0}}}{||x_1|^{\frac{\gamma}{2}-\theta_0-1}x_1-|x_2|^{\frac{\gamma}{2}-\theta_0-1}x_2|^{\frac{4\theta}{\gamma-2\theta_0}}+1}~\forall x_1,x_2\in \mathbb{R},
	\end{align*}
	for all $\theta<<\theta_0$. We obtain from  \eqref{es47}, \eqref{es51}, and \eqref{L^p-b-frac1} that for any $\theta_0\in (0,\gamma/10)$
	\begin{align*}
&\int_{0}^{T}\int_{\Omega}\int_\Omega |u(x,t)|^{\gamma\nu}+|u(y,t)|^{\gamma\nu}+1\\&+\frac{1}{|x-y|^{N+2(1-s)}}\left(\frac{||u_n(x,t)|^{\frac{\gamma}{2}-\theta_0-1}u_n(x,t)-|u_n(y,t)|^{\frac{\gamma}{2}-\theta_0-1}u_n(y,t)|^{\frac{\gamma+2\theta}{\gamma-2\theta_0}}}{||u_n(x,t)|^{\frac{\gamma}{2}-\theta_0-1}u_n(x,t)-|u_n(y,t)|^{\frac{\gamma}{2}-\theta_0-1}u_n(y,t)|^{\frac{4\theta}{\gamma-2\theta_0}}+1}\right)^2 dxdydt\leq C,
	\end{align*}
	for some $\nu>1$.
	Using H\"older's inequality for $(\frac{\gamma+2\theta}{2(\theta+\theta_0)},\frac{\gamma+2\theta}{\gamma-2\theta_0})$,
		\begin{align*}
	&\int_{0}^{T}\int_{\Omega}\int_\Omega\frac{1}{|x-y|^{\frac{\gamma-2\theta_0}{\gamma+2\theta}(N+2(1-s))}} E_{\theta,\theta_0}(x,y)^2 dxdydt\leq C,
	\end{align*}
	where
	\begin{align*}
	E_{\theta,\theta_0}(x,y)&=\frac{||u_n(x,t)|^{\frac{\gamma}{2}-\theta_0-1}u_n(x,t)-|u_n(y,t)|^{\frac{\gamma}{2}-\theta_0-1}u_n(y,t)|}{||u_n(x,t)|^{\frac{\gamma}{2}-\theta_0}+|u_n(y,t)|^{\frac{\gamma}{2}-\theta_0}|^{\frac{4\theta}{\gamma+2\theta}}+1}\left(|u(x,t)|^{\gamma\nu}+|u(y,t)|^{\gamma\nu}+1\right)^{\frac{\theta+\theta_0}{\gamma+2\theta}}.
	\end{align*}
Note that for $\theta>0$ small
	\begin{align*}
	E_{\theta,\theta_0}(x,y)&\geq C_\theta ||u_n(x,t)|^{\frac{\gamma}{2}-\theta_0-1}u_n(x,t)-|u_n(y,t)|^{\frac{\gamma}{2}-\theta_0-1}u_n(y,t)|(|u(x,t)|+|u_n(y,t)|)^{\nu\theta_0-\circ(1)}
\\	&\geq C_\theta ||u_n(x,t)|^{\frac{\gamma}{2}+(\nu-1)\theta_0-1-c_\theta}u_n(x,t)-|u_n(y,t)|^{\frac{\gamma}{2}+(\nu-1)\theta_0-1-c_\theta}u_n(y,t)|,
	\end{align*}
 where $c_\theta>0$ and $c_\theta\to 0$ as $\theta\to0$. Thus we deduce \eqref{es47'}.
	Let $\beta=\frac{s+2}{4}$. Let $\chi_k$ be smooth function in $\Omega$ such that $\chi_k=1$ in $\Omega\backslash\Omega_{1/k}$ and $\chi_k=0$ in $\Omega_{1/2k}$.
	Now, we  prove that
	\begin{align}\label{es53}
	||\chi_k\operatorname{div}(|u_n|^{\gamma/2}\nabla  (-\Delta)^{-s}  |u_n|^{\gamma/2-1}u_n)||_{L^1(0,T;(H^1_0(\Omega)\cap W^{2,\nu}(\Omega))^*)}\leq C_k~~\forall~~n\in\mathbb{N}.
	\end{align}
	for some $\nu>2$.
We have for $\varphi\in L^\infty(0,T,W^{1,\infty}(\Omega))$,
		\begin{align}\label{es52}
	&	I=\left|\int_{0}^{T}\int_\Omega\chi_k\operatorname{div}(|u_n|^{m_1}\nabla  (-\Delta)^{-s}  |u_n|^{m_2-1}u_n)\varphi dxdt\right|\\&\nonumber\leq \left|\int_{0}^{T}\int_\Omega|u_n|^{m_1}\nabla  (-\Delta)^{-s}  (\chi_{4k}|u_n|^{m_2-1}u_n)\nabla (\chi_k\varphi) dxdt\right|\\&\nonumber+\left|\int_{0}^{T}\int_\Omega|u_n|^{m_1}\nabla  (-\Delta)^{-s}  ((1-\chi_{4k})|u_n|^{m_2-1}u_n)\nabla (\chi_k\varphi) dxdt\right|\\&:= I_1+I_2\nonumber.
	\end{align}
	\textbf{Estimate}: $I_2$
	\begin{align*}
	I_2&\leq \int_{0}^{T}\int_\Omega|u_n|^{m_1}dx||\nabla  (-\Delta)^{-s}  ((1-\chi_{4k})|u_n|^{m_2-1}u_n)||_{L^\infty(\Omega\backslash\Omega_{k})}||\nabla (\chi_k\varphi)||_{L^\infty(\Omega)} dt
	\\&\leq C_k \int_{0}^{T}||u_n||_{L^{m_1}(\Omega)}^{m_1}||u_n||_{L^{m_2}(\Omega)}^{m_2}||\varphi||_{W^{1,\infty}(\Omega)} dt\\& \overset{\eqref{es51}} \leq C_k ||\varphi||_{L^{\nu}(0,T;W^{1,\infty}(\Omega))}
	\end{align*}
	for some $\nu>2$.

\textbf{Estimate}: $I_1$. It is easy to see that if $s\geq 1/2$,
		\begin{align*}
		I_1\leq C_k ||\varphi||_{L^{\nu}(0,T;W^{1,\infty}(\Omega))}
		\end{align*}
			for some $\nu>2$. 		So, it is enough to assume $s\in (0,1/2)$. We will prove that
			\begin{align}\label{es57}
		I_1\leq C_k ||\varphi||_{L^{\nu_1}(0,T;W^{2,\nu_2}(\Omega))}
		\end{align} holds
		for some $\nu_1,\nu_2>1.$ Indeed, for $\beta\in (0,1/2)$
		\begin{align*}
		I_1&=\left|\int_{0}^{T}\int_\Omega  (-\Delta)^{\beta-s}  (\chi_{4k}|u_n|^{m_2-1}u_n) (-\Delta)^{-\beta}\operatorname{div}\left(|u_n|^{m_1}\nabla (\chi_k\varphi)\right) dxdt\right|\\&\leq
		\int_{0}^{T} ||(-\Delta)^{\beta-s}  (\chi_{4k}|u_n|^{m_2-1}u_n)||_{L^{\frac{\gamma}{m_2}}(\Omega)}||(-\Delta)^{-\beta}\operatorname{div}\left(|u_n|^{m_1}\nabla (\chi_k\varphi)\right)||_{L^{\frac{\gamma}{m_1}}(\Omega)} dt
		\end{align*}
By Lemma \ref{fractional-lem}, 		
	\begin{align*}
I_1&\leq
C_k\int_{0}^{T} ||\chi_{4k}|u_n|^{m_2-1}u_n||_{W^{2(\beta-s)^+,\frac{\gamma}{m_2}}(\Omega)}|||u_n|^{m_1}\nabla (\chi_k\varphi)||_{W^{1-2\beta,\frac{\gamma}{m_1}}(\Omega)} dt\\&\leq
C_k\int_{0}^{T} ||\chi_{4k}|u_n|^{m_2-1}u_n||_{W^{2(\beta-s)^+,\frac{\gamma}{m_2}}(\Omega)}||\chi_{4k}|u_n|^{m_1}||_{W^{1-2\beta,\frac{\gamma}{m_1}}(\Omega)}||\varphi||_{W^{2,2N/\beta}(\Omega)} dt.
\end{align*}

\noindent\textbf{Case 1.}  $m_1=m_2$. We take $\beta=\frac{1+2s}{4}$. Using interpolation inequality yields
	 \begin{align*}
	 I_1&\leq
	 C_k\int_{0}^{T} |||u_n|^{\frac{\gamma}{2}-1}u_n||_{H^{1-2s}(\Omega)}^2||\varphi||_{W^{2,\infty}(\Omega)} dt\\&\leq
	 C_k\int_{0}^{T} |||u_n|^{\frac{\gamma}{2}-1}u_n||_{H^{1-3s/2}(\Omega)}^{\frac{4(1-2s)}{2-3s}}|||u_n|^{\frac{\gamma}{2}}||_{L^2(\Omega)}^{\frac{2s}{2-3s}}||\varphi||_{W^{2,2N/\beta}(\Omega)} dt\\&\overset{\eqref{es51}}  \leq C_k\int_{0}^{T} |||u_n|^{\frac{\gamma}{2}-1}u_n||_{H^{1-3s/2}(\Omega)}^{\frac{4(1-2s)}{2-3s}}
||\varphi||_{W^{2,2N/\beta}(\Omega)} dt
\\&\overset{\eqref{es56}}\leq C_k||\varphi||_{L^\nu(0,T,W^{2,2N/\beta}(\Omega))}
	 \end{align*}
	 for some $\nu>2$. So, we get \eqref{es57}.

\noindent	 \textbf{Case 2.} $m_1<m_2$.
	 We take $\beta\in (s,1/2)$ such that $
	 \frac{2m_1}{\gamma}(1-s)>1-2\beta.$\\
	 As \eqref{es76} we have
	 \begin{align}\label{es58}
	 ||u_n|^{\frac{\gamma}{2}-1}u_n||_{H^{1-s-\varrho}(\Omega)}^{\frac{2m_1}{\gamma}}&\geq C |||u_n|^{m_1-1}u_n||_{W^{\frac{2m_1(1-s-\varrho)}{\gamma},\frac{\gamma}{m_1}}(\Omega)}\\&\geq C |||u_n|^{m_1-1}u_n||_{W^{1-2\beta,\frac{\gamma}{m_1}}(\Omega)}\nonumber.
	 \end{align}
	 for $\varrho>0$ small enough. 	 By \cite[Proposition 5.1, Chapter 2]{Taylor}
	 \begin{align}
	 \label{es60}&||\chi_{4k}|u_n|^{m_2-1}u_n||_{W^{2(\beta-s),\frac{\gamma}{m_2}}(\Omega)}\\&~~\leq C||(\chi_{4k}^{\frac{\gamma}{2m_2}}|u_n|^{\frac{\gamma}{2}})^{\frac{m_2-m_1}{\gamma}}||_{L^{p_1}(\mathbb{R}^N)} ||\chi_{4k}^{\frac{\gamma}{2m_2}}|u_n|^{\frac{\gamma}{2}-1}u_n||_{W^{2(\beta-s),p_2}(\mathbb{R}^N)}\nonumber
	 \\&~~\leq \nonumber C|||u_n|^{\frac{\gamma}{2}}||_{L^{\frac{p_1(m_2-m_1)}{\gamma}}(\Omega)}^{\frac{m_2-m_1}{\gamma}} |||u_n|^{\frac{\gamma}{2}-1}u_n||_{H^{2(\beta-s)}(\Omega)}
	 \\&~~\leq C|||u_n|^{\frac{\gamma}{2}-1}u_n||_{H^{1-s-\varrho}}^{\frac{2m_2}{\gamma}} \nonumber
	 \end{align}
	 for $\varrho>0$ small enough, where
	 $p_1=\frac{2N\gamma}{(m_2-m_1)(N-2(1-s-\varrho))}$,  $p_2=\frac{2N\gamma}{\gamma N+2(1-s-\varrho)(m_2-m_1)}$ if $N-2(1-s)\geq 0$, and $p_1=1, p_2=\frac{\gamma}{m_2}$ if $N-2(1-s)< 0.$

	 So, it follows from \eqref{es58} and \eqref{es60}   that
	 	\begin{align*}
	 I_1&\leq
	 C_k\int_{0}^{T} |||u_n|^{\frac{\gamma}{2}-1}u_n||_{H^{1-s-\varrho}}^2||\varphi||_{W^{2,2N/\beta}(\Omega)} dt.
	 \end{align*}
	 Thus, as in \textbf{case 1.}, using interpolation inequality and \eqref{es51} we get
	 \begin{align*}
	 I_1&\leq
	 C_k ||\varphi||_{L^{\nu}(0,T,W^{2,2N/\beta}(\Omega))}.
	 \end{align*}
	 for some $\nu>1$.

\noindent	  \textbf{Case 3.} $m_2>m_1$. Similarly, we also obtain
	  \begin{align*}
	  I_1&\leq
	  C_k ||\varphi||_{L^{\nu}(0,T,W^{2,2N/\beta}(\Omega))}.
	  \end{align*}
	  for some $\nu>1$. Therefore, we deduce \eqref{es53}.

Since
\begin{align*}
(\chi_k u_n)_t = \chi_k f_n+\chi_k\operatorname{div}(|u_n|^{m_1}\nabla  (-\Delta)^{-s}  |u_n|^{m_2-1}u_n),
\end{align*}
thus for any $\varrho>0$, $k\geq 1$
	\begin{align*}
	||\partial_t (\chi_ku_n)||_{L^1(0,T;(W^{2,\nu}(\Omega))^*)}+||u_n||_{L^{q}(\Omega_T)}+|||u_n|^{\frac{\gamma}{2}}||_{L^2(0,T,H^{1-s-\varrho}(\Omega))}\leq C_k ~~\forall~~n,
	\end{align*}
	for some $\nu\geq 2$.
	By Lemma \ref{Simonlemma}, there exists a subsequence of $\{u_n\}$ converging to $u$ in $L^1(\Omega_T)$ as $n\to\infty$. Moreover,  $u$ satisfies the properties stated in Lemmas \ref{le-1}, \ref{le-2}, \ref{le-3}, \ref{le-4},\ref{le-5} and \ref{le-6} with $\delta=0$ and the Universal bound \eqref{univ.est}   and
	\begin{align*}
	|u_n|^{\frac{\gamma}{2}-1}u_n\to |u|^{\frac{\gamma}{2}-1}u~~\text{in}~~ L^2(0,T;H^{1-s-\varepsilon_0}(\Omega))~\forall~~\varepsilon_0>0
	\end{align*}
	Thus, we derive from proof of \eqref{es53} that	\begin{align*}
	&	|\int_{\Omega_T}\operatorname{div}(|u_n|^{m_1}\nabla  (-\Delta)^{-s}  |u_n|^{m_2-1}u_n)\varphi dx-\int_{\Omega_T}\operatorname{div}(|u|^{m_1}\nabla  (-\Delta)^{-s}  |u|^{m_2-1}u)\varphi dx|\\&\leq
		 \left|\int_{0}^{T}\int_\Omega (-\Delta)^{\beta-s}  (|u_n|^{\gamma/2-1}u_n- |u|^{\gamma/2-1}u) (-\Delta)^{-\beta}\left[\operatorname{div}\left(|u_n|^{m_1}\nabla \varphi\right)\right] dxdt\right|
	\\&+ \left|\int_{0}^{T}\int_\Omega (-\Delta)^{\beta-s}  (|u|^{m_2-1}u) (-\Delta)^{-\beta}\left[\operatorname{div}\left(\left(|u_n|^{m_1}-|u|^{m_1}\right)\nabla \varphi\right)\right] dxdt\right|~~\text{as}~~n\to\infty.
	\end{align*}
	for any $\varphi\in C^2_c(\Omega\times[0,T))$.  Thus, $u$ is a distribution solution of problem \eqref{pro1}. The proof is complete.
\end{proof}

\section{Appendix}\label{sec.prelim}

In this section, we collect some basic estimates of the semi-group $e^{t\Delta}$ and the fractional operator that we have used throughout the paper.

\begin{lemma} \label{le12}Let $e^{t\Delta}$ be the semi-group in bounded domain $\Omega$. Then, the following properties hold
	\begin{align}\label{para3}
	||e^{t\Delta}u_0||_{L^\infty(\Omega)}\leq C||u_0||_{L^\infty(\Omega)},
	\end{align}
	\begin{align}\label{para3+}
	||e^{t\Delta}\operatorname{div}(g)||_{L^\infty(\Omega)}\leq \frac{C}{\sqrt{t}} ||g||_{L^\infty(\Omega)},
	\end{align}
	and
		\begin{align}\label{para5} ||\nabla e^{t\Delta}u_0||_{L^\infty(\Omega)}\leq \frac{C}{\sqrt{t}} ||u_0||_{L^\infty(\Omega)}.
	\end{align}
\end{lemma}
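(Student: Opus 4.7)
The plan is to prove all three estimates by appealing to the classical pointwise Gaussian upper bounds for the Dirichlet heat kernel $K(t,x,y)$ on the smooth bounded domain $\Omega$. Recall (see Davies, \emph{Heat Kernels and Spectral Theory}) that there exist constants $c_1,c_2,c_3>0$ such that
\begin{equation*}
0\leq K(t,x,y)\leq c_1\, t^{-N/2}\,e^{-c_2|x-y|^2/t}\,,\qquad |\nabla_x K(t,x,y)|\leq c_3\, t^{-(N+1)/2}\,e^{-c_2|x-y|^2/t},
\end{equation*}
uniformly for $(x,y)\in\Omega\times\Omega$ and $0<t\leq 1$; for $t\geq 1$ the kernel (and its gradient) decays exponentially like $e^{-\lambda_1 t/2}$ after one bounded step. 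Everything needed will reduce to integrating these bounds in the $y$ variable.

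For \eqref{para3}, the fastest route is the maximum principle: the constant $M=\|u_0\|_{L^\infty(\Omega)}$ is a (super)solution of the heat equation, and $\pm u_0\leq M$, so comparison yields $|e^{t\Delta}u_0|\leq M$. Equivalently, $\int_\Omega K(t,x,y)\,dy\leq 1$, and the bound is immediate from the kernel representation. For \eqref{para5}, write $\nabla_x e^{t\Delta}u_0(x)=\int_\Omega \nabla_x K(t,x,y)\,u_0(y)\,dy$ and estimate
\begin{equation*}
|\nabla_x e^{t\Delta}u_0(x)|\leq \|u_0\|_{L^\infty(\Omega)}\int_\Omega |\nabla_x K(t,x,y)|\,dy\leq C\,t^{-1/2}\|u_0\|_{L^\infty(\Omega)},
\end{equation*}
where the Gaussian factor is integrated via the change of variables $y\mapsto x+\sqrt{t}\,z$, which absorbs $t^{-N/2}$ and leaves the factor $t^{-1/2}$. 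For large $t$ one uses instead the semigroup property $e^{t\Delta}=e^{(t-1)\Delta}\circ e^{\Delta}$, the exponential decay of the kernel, and the fact that $e^{\Delta}$ is bounded from $L^\infty$ into $C^1(\overline{\Omega})$ by parabolic regularity.

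For \eqref{para3+} the idea is to transfer the divergence onto the kernel. Since $K(t,x,\cdot)$ vanishes on $\partial\Omega$, one has, for $g\in L^\infty(\Omega;\mathbb{R}^N)$ (interpreting $e^{t\Delta}\mathrm{div}(g)$ in the weak sense as the $L^\infty$ function defined by the formula below),
\begin{equation*}
e^{t\Delta}\mathrm{div}(g)(x)=-\int_\Omega \nabla_y K(t,x,y)\cdot g(y)\,dy,
\end{equation*}
because the boundary term in the integration by parts vanishes. Then
\begin{equation*}
|e^{t\Delta}\mathrm{div}(g)(x)|\leq \|g\|_{L^\infty(\Omega)}\int_\Omega|\nabla_y K(t,x,y)|\,dy\leq C\,t^{-1/2}\|g\|_{L^\infty(\Omega)},
\end{equation*}
by the same Gaussian integration as above, using the symmetry $K(t,x,y)=K(t,y,x)$ to get the $\nabla_y K$ bound from the $\nabla_x K$ bound.

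There is no real obstacle here; the mildly delicate point is making sure one has the gradient Gaussian estimate for the \emph{Dirichlet} kernel on a smooth bounded domain rather than on $\mathbb{R}^N$, and handling the large-$t$ regime, which is done by the semigroup splitting above and spectral decay. All these ingredients are classical, and the whole lemma follows in a few lines once the kernel estimates are quoted.
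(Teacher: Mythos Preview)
Your proposal is correct and follows essentially the same approach as the paper: the paper simply quotes the pointwise Gaussian bounds on the Dirichlet heat kernel and its gradient (citing \cite{Con-Igna}, which gives the slightly sharper versions with the boundary factors $\min\{d(x)/|x-y|,1\}$) and then states that ``it is not hard to show'' the three estimates from them, omitting all details. Your write-up actually supplies the integration-by-parts step for \eqref{para3+} and the Gaussian integration that the paper skips, so it is a faithful and somewhat more complete version of the same argument.
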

We refer to \cite{QH1,QH2} for $L^p$ estimates for the semi-group $e^{t\Delta}$.
 Let $\mathbf{H}(t,x,y)$  be the Heat kernel in $\Omega\times (0,\infty)$. We recall some basic properties of $\mathbf{H}$, 	see \cite{Con-Igna},
 \begin{align}\label{heatkernel1}
 H(t,x,y)\leq C\min\left\{\frac{d(x)}{|x-y|},1\right\}\min\left\{\frac{d(y)}{|x-y|},1\right\}t^{-\frac{N}{2}}\exp(-c\frac{|x-y|^2}{t})
 \end{align}
 and
 \begin{align}\label{heatkernel2}
 |\nabla_x H(t,x,y)|\leq C \left[\frac{1}{d(x)}1_{\sqrt{t}\geq d(x)}+(\frac{1}{\sqrt{t}}+\frac{|x-y|}{t})1_{\sqrt{t}<d(x)}\right]H(t,x,y)
 \end{align}
 \begin{align}\label{heatkernel3}
 |\nabla_y H(t,x,y)|\leq C \left[\frac{1}{d(y)}1_{\sqrt{t}\geq d(y)}+(\frac{1}{\sqrt{t}}+\frac{|x-y|}{t})1_{\sqrt{t}<d(y)}\right]H(t,x,y)\,.
 \end{align}
It is not hard to show that these properties of the Heat kernel imply \eqref{para3},\eqref{para3+} and \eqref{para5}. We omit the details.
\begin{lemma} \label{Lpes}Let $p>1$ and $\beta\in (0,1]$. Then,
	if  $\beta< N/2p $
	\begin{align}\label{L^p-b-frac1}
	||f||_{L^{\frac{pN}{N-2p\beta}(\Omega)}}\leq ||(-\Delta)^{\beta}f||_{L^p(\Omega)}~~\forall~~f\in L^p(\Omega),
	\end{align}
	if $\beta=\frac{N}{2p}$
	\begin{align}\label{L^p-b-frac2}
	||f||_{BMO(\Omega)}\leq ||(-\Delta)^{\beta}f||_{L^p(\Omega)}~~\forall~~f\in L^p(\Omega),
	\end{align}
	if $\beta>\frac{N}{2p}$
	\begin{align}\label{L^p-b-frac3}
	||f||_{L^\infty(\Omega)}\leq ||(-\Delta)^{\beta}f||_{L^p(\Omega)}~~\forall~~f\in L^p(\Omega).
	\end{align}
\end{lemma}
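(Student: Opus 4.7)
The plan is to represent $f$ in terms of $g := (-\Delta)^\beta f$ via the subordination formula
\begin{equation*}
f(x) = \frac{1}{\Gamma(\beta)}\int_0^\infty t^{\beta-1}\,(e^{t\Delta}g)(x)\,dt,
\end{equation*}
which is well-defined because $-\Delta$ is a nonnegative self-adjoint operator with spectrum in $[\lambda_1,\infty)$, $\lambda_1>0$. Substituting in the Dirichlet heat kernel representation $e^{t\Delta}g(x)=\int_\Omega \mathbf{H}(t,x,y)g(y)\,dy$ and applying Fubini gives a convolution representation of $f$ against a time-weighted heat kernel.

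I would then use the Gaussian upper bound $\mathbf{H}(t,x,y)\leq C t^{-N/2}\exp(-c|x-y|^2/t)$ for $t\leq 1$, a consequence of \eqref{heatkernel1}, together with the exponential-in-time decay $\mathbf{H}(t,x,y)\leq C e^{-\lambda_1 t/2}$ for $t\geq 1$ coming from the spectral gap, to derive the pointwise estimate
\begin{equation*}
|f(x)|\leq C\int_\Omega |g(y)|K_\beta(x,y)\,dy + C\,\|g\|_{L^p(\Omega)}.
\end{equation*}
The kernel $K_\beta$ is controlled by evaluating $\int_0^1 t^{\beta-1-N/2}e^{-c|x-y|^2/t}\,dt$ after the change of variables $u=|x-y|^2/t$; one obtains $K_\beta(x,y)\leq C|x-y|^{2\beta-N}$ when $\beta<N/2$, $K_\beta(x,y)\leq C(1+\log(R/|x-y|))$ (with $R$ the diameter of $\Omega$) when $\beta=N/2$, and $K_\beta(x,y)\leq C$ when $\beta>N/2$. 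The additive $L^p$ term on the right comes from the large-time piece of the subordination integral, which is absorbed via $\|g\|_{L^1(\Omega)}\leq |\Omega|^{1/p'}\|g\|_{L^p(\Omega)}$.

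Once the pointwise bound is in hand, the three conclusions reduce to classical mapping properties of the Riesz-type potential $I_{2\beta}|g|(x):=\int_\Omega |x-y|^{2\beta-N}|g(y)|\,dy$. In the subcritical range $\beta<N/(2p)$, the Hardy--Littlewood--Sobolev inequality yields \eqref{L^p-b-frac1} with the sharp exponent $pN/(N-2p\beta)$. In the critical case $\beta=N/(2p)$, the $BMO$ estimate \eqref{L^p-b-frac2} follows from the endpoint HLS embedding, and when in addition $\beta=N/2$ the extra logarithmic factor in $K_\beta$ is itself of $BMO$ type and is absorbed. In the supercritical range $\beta>N/(2p)$, the kernel $|x-y|^{2\beta-N}\wedge 1$ lies in $L^{p'}(\Omega)$ uniformly in $x$, so H\"older's inequality in $y$ directly gives \eqref{L^p-b-frac3}.

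The main obstacle is the borderline case $\beta=N/(2p)$, where a naive pointwise bound is insufficient: one must either invoke an off-the-shelf endpoint HLS embedding into $BMO$, or prove it directly by estimating the mean oscillation of $I_{2\beta}|g|$ on balls $B(x_0,r)\subset\Omega$ using a standard decomposition of $g$ into its restrictions to $B(x_0,2r)$ and to $\Omega\setminus B(x_0,2r)$, controlling the first piece by $L^p$ and the second by the smoothness of the Riesz kernel. A secondary technical point is the uniformity of constants; since $\Omega$ is bounded and smooth, all the additive contributions from the long-time regime and from the logarithmic part of $K_\beta$ depend only on $N$, $p$, $\beta$ and $\Omega$, as required.
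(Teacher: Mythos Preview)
Your approach is correct and coincides with the paper's: both obtain a pointwise Riesz-potential bound $|(-\Delta)^{-\beta}g(x)|\le C\,I_{2\beta}|g|(x)$ and then invoke the standard potential estimates (Hardy--Littlewood--Sobolev, the endpoint $BMO$ embedding, and H\"older). The only difference is that the paper quotes the pointwise bound directly from Caffarelli--Stinga \cite[Theorem~2.4]{ca1} and the mapping properties from Adams \cite{Adma}, whereas you derive the pointwise bound by hand from the subordination formula and the Gaussian heat-kernel estimate \eqref{heatkernel1}; this is precisely how the cited bound is proved, so you are unpacking one level of reference rather than taking a different route.
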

\begin{proof} By \cite[Theorem 2.4]{ca1}, we have
\begin{align*}
|(-\Delta)^{-\beta}f(x)|\leq C \int_{0}^{R}\frac{\int_{B_\rho(x)}|f(y)|dy}{\rho^{N-2\beta}}\frac{d\rho}{\rho}~~\forall~~x\in \Omega,
\end{align*}
with $R=2\,diam(\Omega)$. Thus, by the standard potential estimate, see \cite{Adma} we get \eqref{L^p-b-frac1}, \eqref{L^p-b-frac2} and \eqref{L^p-b-frac3}.
\end{proof}
\begin{lemma}\label{le14} Let $\beta\in (\frac{1}{2},1]$. Then,
		\begin{align}\label{es2}
	||\nabla (-\Delta)^{-\beta}f||_{L^\infty(\Omega)}\leq C||f||_{L^\infty(\Omega)}
	\end{align}
\end{lemma}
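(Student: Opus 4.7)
The plan is to use the semigroup representation
\begin{equation*}
(-\Delta)^{-\beta} f = \frac{1}{\Gamma(\beta)} \int_0^\infty t^{\beta-1}\, e^{t\Delta} f\, dt
\end{equation*}
and commute the gradient with the time integral:
\begin{equation*}
\nabla (-\Delta)^{-\beta} f = \frac{1}{\Gamma(\beta)} \int_0^\infty t^{\beta-1}\, \nabla e^{t\Delta} f\, dt.
\end{equation*}
I would then split the time integral at $t=1$ and control the two halves separately. The scale $\beta = 1/2$ appears precisely because of the short-time blow-up of the gradient of the heat semigroup.

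For the short-time piece $t \in (0,1]$, I apply \eqref{para5} directly to get $\|\nabla e^{t\Delta} f\|_{L^\infty(\Omega)} \le C t^{-1/2}\|f\|_{L^\infty(\Omega)}$, so the integrand is majorized by $C t^{\beta - 3/2}\|f\|_{L^\infty(\Omega)}$. Since $\beta > 1/2$, this is integrable on $(0,1]$, contributing a term of size $C\|f\|_{L^\infty(\Omega)}$.

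For the long-time piece $t \in [1,\infty)$, I would use the semigroup property to write $\nabla e^{t\Delta} f = \nabla e^{\Delta}\bigl(e^{(t-1)\Delta} f\bigr)$, so by \eqref{para5} applied at time $1$,
\begin{equation*}
\|\nabla e^{t\Delta} f\|_{L^\infty(\Omega)} \le C \|e^{(t-1)\Delta} f\|_{L^\infty(\Omega)}.
\end{equation*}
To gain exponential decay I would combine the spectral contraction $\|e^{s\Delta}\|_{L^2\to L^2} \le e^{-\lambda_1 s}$ with the ultracontractive bound $\|e^{s\Delta}\|_{L^2\to L^\infty} \le C s^{-N/4}$ (an immediate consequence of the Gaussian upper bound \eqref{heatkernel1}), using the factorization $e^{s\Delta} = e^{s\Delta/2}\circ e^{s\Delta/2}$ to deduce $\|e^{s\Delta} g\|_{L^\infty(\Omega)} \le C s^{-N/4} e^{-\lambda_1 s/2} |\Omega|^{1/2}\|g\|_{L^\infty(\Omega)}$. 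For $s \ge 1$ this yields $\|e^{s\Delta}g\|_{L^\infty(\Omega)} \le C e^{-\lambda_1 s/2}\|g\|_{L^\infty(\Omega)}$, so the long-time contribution is bounded by $C\|f\|_{L^\infty(\Omega)} \int_1^\infty t^{\beta-1} e^{-\lambda_1(t-1)/2}\, dt \le C\|f\|_{L^\infty(\Omega)}$.

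The only slightly delicate point is the long-time exponential decay of $\|e^{s\Delta}\|_{L^\infty \to L^\infty}$, but this is standard once one has the Gaussian heat kernel bound \eqref{heatkernel1} together with the spectral gap $\lambda_1 > 0$ coming from the Dirichlet boundary condition; the short-time estimate is simply the quoted bound \eqref{para5}. Summing the two pieces and dividing by $\Gamma(\beta)$ yields \eqref{es2}.
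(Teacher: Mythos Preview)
Your argument is correct. The paper does not actually prove this lemma; it simply cites \cite[Theorem~5]{ca1} (Caffarelli--Stinga). Your route is genuinely different in that it is self-contained, using only the semigroup representation of $(-\Delta)^{-\beta}$ together with the heat-semigroup estimates already collected in Lemma~\ref{le12} and the Gaussian bound \eqref{heatkernel1}. The cited reference proceeds instead via pointwise kernel estimates for the Green function of $(-\Delta)^{-\beta}$, which yields somewhat more (H\"older regularity up to the boundary), whereas your semigroup argument is the minimal one for the $L^\infty$ bound and makes transparent why the threshold $\beta>\tfrac12$ arises (integrability of $t^{\beta-3/2}$ near $t=0$). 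One small point to tidy: your exponential decay $\|e^{s\Delta}\|_{L^\infty\to L^\infty}\le Ce^{-\lambda_1 s/2}$ is derived only for $s\ge 1$, so on the range $1\le t\le 2$ (where $s=t-1\in[0,1]$) you should simply invoke \eqref{para3} to bound $\|e^{(t-1)\Delta}f\|_{L^\infty}\le C\|f\|_{L^\infty}$; this is of course harmless.
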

	\begin{proof} Its proof can be found in \cite[Theorem 5]{ca1}.
\end{proof}
\begin{lemma}\label{le-H} Let $\beta\in (0,\frac{1}{2}]$. Then,
	\begin{align}\label{es49}
	||(-\Delta)^{-\beta}\dive(g)||_{L^2(\Omega)}\leq C||g||_{H^{1-2\beta}(\Omega)}~~\forall~~g\in H^{1-2\beta}(\Omega).
	\end{align}
\end{lemma}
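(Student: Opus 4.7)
My plan is to reduce the estimate to classical elliptic regularity for the Dirichlet Laplacian. Given $g\in H^{1-2\beta}(\Omega)$, let $\phi\in H^1_0(\Omega)$ be the unique weak solution of the Dirichlet problem
\begin{equation*}
-\Delta\phi = -\dive(g)\quad\text{in }\Omega,\qquad \phi|_{\partial\Omega}=0.
\end{equation*}
Then $\dive(g)=-(-\Delta)\phi$, and applying the bounded spectral operator $(-\Delta)^{-\beta}$ on both sides yields the identity $(-\Delta)^{-\beta}\dive(g) = -(-\Delta)^{1-\beta}\phi$, so that
\begin{equation*}
\|(-\Delta)^{-\beta}\dive(g)\|_{L^2(\Omega)} = \|(-\Delta)^{1-\beta}\phi\|_{L^2(\Omega)}.
\end{equation*}

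Since $1-\beta\in[1/2,1)$ and $\phi\in H^1_0(\Omega)$, the spectral norm on the right is controlled by the standard Sobolev norm $\|\phi\|_{H^{2-2\beta}(\Omega)}$ on a bounded smooth domain; this identification of the spectral fractional Sobolev space with the standard one (with the Dirichlet condition built in) in the range $2-2\beta\in[1,2)$ is classical and can be read off from the extension theory of \cite{Cabre-Tan} already invoked by the authors.

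To complete the proof I would then apply the standard fractional elliptic regularity for the Dirichlet Laplacian on the smooth domain $\Omega$: combining the $H^2$-regularity bound $\|\phi\|_{H^2}\leq C\|\Delta\phi\|_{L^2}$ (corresponding to $\beta=0$) with the energy estimate $\|\phi\|_{H^1_0}\leq C\|\dive g\|_{H^{-1}}$ (corresponding to $\beta=1/2$) by interpolation to obtain
\begin{equation*}
\|\phi\|_{H^{2-2\beta}(\Omega)} \leq C\,\|\dive(g)\|_{H^{-2\beta}(\Omega)} \leq C\,\|g\|_{H^{1-2\beta}(\Omega)},
\end{equation*}
where the last inequality uses that $\dive$ maps $H^{1-2\beta}(\Omega)$ boundedly into $H^{-2\beta}(\Omega)$. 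Chaining the three displayed inequalities delivers \eqref{es49}.

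The main technical delicacy is the spectral-to-standard Sobolev norm equivalence on the bounded smooth domain near the threshold $2-2\beta=3/2$ (equivalently $\beta=1/4$), where the characterization of the domain of the spectral fractional power involves some subtle boundary-trace considerations; however, this borderline case is absorbed by real interpolation between the clean endpoints $\beta=0$ and $\beta=1/2$, so it does not affect the final constant in the estimate.
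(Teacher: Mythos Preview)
Your proof is correct and follows essentially the same route as the paper: set $v=(-\Delta)^{-1}\dive(g)$ (your $-\phi$), write $(-\Delta)^{-\beta}\dive(g)=(-\Delta)^{1-\beta}v$, bound the spectral norm by $\|v\|_{H^{2-2\beta}(\Omega)}$, and then invoke the elliptic regularity $\|v\|_{H^{\varrho+1}(\Omega)}\le C\|g\|_{H^{\varrho}(\Omega)}$ for $\varrho=1-2\beta$ (the paper cites Jerison--Kenig \cite{JerKenig} for this rather than arguing by interpolation). The only cosmetic difference is that you factor the last step through $\|\dive(g)\|_{H^{-2\beta}}$, whereas the paper states the bound directly in terms of $\|g\|_{H^{1-2\beta}}$.
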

\begin{proof}Set $v=(-\Delta)^{-1}\operatorname{div}(g)$, by the standard regularity theorem for Laplace we have
	\begin{align*}
	||v||_{H^{\varrho+1}(\Omega)}\leq C ||g||_{H^{\varrho}(\Omega)}~~\forall~~\varrho\in [0,1]
	\end{align*}
	see \cite{JerKenig}, it follows
	\begin{align*}
	||(-\Delta)^{-\beta}\operatorname{div}(g)||_{L^2(\Omega)}=||(-\Delta)^{1-\beta}v||_{L^2(\Omega)}\leq C ||v||_{H^{2(1-\beta)}(\Omega)}
	\leq C ||g||_{H^{1-2\beta}(\Omega)},
	\end{align*}
	so, we find \eqref{es49}. The proof is complete.\\
\end{proof}
\begin{lemma}\label{fractional-lem} Let $\beta\in (0,1/2),p>1$ and $\varepsilon>0$. Then,
	\begin{align}\label{es55}
	||(-\Delta)^{\beta}h||_{L^p(\Omega)}\leq C||h||_{W^{2\beta,p}(\Omega)}
	\end{align}
	for all $h\in W^{2\beta,p}(\Omega)$ $\supp h\subset \Omega\backslash\Omega_{\varepsilon_0}$ and
	\begin{align}\label{es54}
	||(-\Delta)^{-\beta}\operatorname{div}(g)||_{L^p(\Omega)}\leq C||g||_{W^{1-2\beta,p}(\Omega)}
	\end{align}
	for all $g\in W^{1-2\beta,p}(\Omega)$ $\supp g\subset \Omega\backslash\Omega_{\varepsilon_0}$.	
\end{lemma}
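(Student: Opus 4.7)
The plan is to reduce both estimates to the classical mapping properties of the free fractional Laplacian on $\mathbb{R}^{N}$; the support hypothesis $\supp h, \supp g \subset \Omega\setminus\Omega_{\varepsilon_{0}}$ is precisely what allows this reduction, as it keeps the defect between the Dirichlet spectral operator and the $\mathbb{R}^{N}$ operator controllable via the heat-kernel estimates \eqref{heatkernel1}--\eqref{heatkernel3}.

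For \eqref{es55} I first extend $h$ by zero to $\mathbb{R}^{N}$, obtaining $\tilde{h}$, and decompose
\begin{equation*}
(-\Delta_{\Omega})^{\beta}h(x) = (-\Delta_{\mathbb{R}^{N}})^{\beta}\tilde{h}(x) + D(x), \qquad x\in\Omega,
\end{equation*}
where, by the semigroup formulas for both operators, the defect is
\begin{equation*}
D(x) = \frac{1}{\Gamma(-\beta)}\int_{0}^{\infty}\int_{\Omega}\bigl(H_{\mathbb{R}^{N}}(t,x,y) - H_{\Omega}(t,x,y)\bigr) h(y)\,dy\,\frac{dt}{t^{1+\beta}}.
\end{equation*}
The first term is bounded in $L^{p}$ by $C\|h\|_{W^{2\beta,p}(\Omega)}$ via the classical identification of the Bessel-potential space $H^{2\beta,p}(\mathbb{R}^{N})$ with the Slobodeckij space $W^{2\beta,p}(\mathbb{R}^{N})$, valid for $p\in(1,\infty)$ and $\beta\in(0,1/2)$. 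For the defect I use $0\le H_{\mathbb{R}^{N}}-H_{\Omega}\le H_{\mathbb{R}^{N}}$ together with \eqref{heatkernel1}, which reveals an additional $\min\{1,d(y)/\sqrt{t}\}$-type damping in the difference; on $\supp h$ one has $d(y)\ge\varepsilon_{0}$, and splitting the $t$-integral at $t\sim\varepsilon_{0}^{2}$ reduces $D$ to a bounded operator on $L^{p}(\Omega)$ with norm $C_{\varepsilon_{0}}$.

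For \eqref{es54} I will proceed by duality: testing against $\varphi\in C_{c}^{\infty}(\Omega)$,
\begin{equation*}
\int_{\Omega}(-\Delta)^{-\beta}\operatorname{div}(g)\,\varphi\,dx = -\int_{\Omega}g\cdot\nabla(-\Delta)^{-\beta}\varphi\,dx,
\end{equation*}
so the task reduces to $\|\nabla(-\Delta)^{-\beta}\varphi\|_{(W^{1-2\beta,p}(\Omega))^{*}}\le C\|\varphi\|_{L^{p'}(\Omega)}$. On $\mathbb{R}^{N}$ this follows from the identity $\nabla(-\Delta)^{-\beta}=R\,(-\Delta)^{1/2-\beta}$, with $R$ the vector Riesz transform, combined with the Mikhlin multiplier theorem and Bessel-potential theory; the passage from the free to the spectral operator on $\Omega$ is handled by the same heat-kernel comparison as above, harmless because $g$ has support at distance $\ge\varepsilon_{0}$ from $\partial\Omega$.

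The main technical point is the uniform control of the kernel defect $H_{\mathbb{R}^{N}}-H_{\Omega}$ in a form that keeps the $t^{-1-\beta}\,dt$-integral convergent; I expect to split into the regime $t\lesssim\varepsilon_{0}^{2}$, where the two kernels are exponentially close thanks to the Gaussian factor and the distance-to-boundary suppression in \eqref{heatkernel1}, and $t\gtrsim\varepsilon_{0}^{2}$, where spectral decay of $e^{t\Delta_{\Omega}}$ renders the remainder trivially bounded.
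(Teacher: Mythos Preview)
Your strategy for \eqref{es55}---extend by zero, split off $(-\Delta_{\mathbb{R}^N})^{\beta}\tilde h$, and control the heat-kernel defect---is essentially the paper's, though the execution differs: instead of bounding $H_{\mathbb{R}^N}-H_\Omega$ directly, the paper introduces a cutoff $\chi\in C_c^\infty(\Omega)$ equal to $1$ on $\Omega\setminus\Omega_{\varepsilon_0/4}$, uses parabolic interior regularity (the data $h$ vanishes near $\partial\Omega$) to get $\|e^{t\Delta_\Omega}h\|_{C^{2,1}(\overline{\Omega}_{\varepsilon_0/2})}\le C\|h\|_{L^1}$, and observes that $V:=e^{t\Delta_{\mathbb{R}^N}}(\chi h)-\chi\,e^{t\Delta_\Omega}h$ solves a heat equation on $\mathbb{R}^N$ with zero initial data and a bounded compactly supported source, whence $|V|\le C\min\{t,t^{-4}\}\|h\|_{L^1}$ and the defect $\int_0^\infty t^{-1-\beta}V\,dt$ is pointwise $O(\|h\|_{L^1})$. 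This is a bit cleaner than your direct kernel splitting, which as written only uses $d(y)\ge\varepsilon_0$; to make your argument go through at points $x$ near $\partial\Omega$ you would also need a small-$t$ bound on $H_{\mathbb{R}^N}-H_\Omega$ reflecting that paths from $y$ must travel distance $\ge\varepsilon_0$ to reach the boundary, which is true but not immediate from \eqref{heatkernel1} alone.

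For \eqref{es54} your route is genuinely different. The paper does \emph{not} dualise: it writes $(-\Delta)^{-\beta}\operatorname{div}(g)=C_\beta\int_0^\infty t^{\beta-1}e^{t\Delta_\Omega}\operatorname{div}(g)\,dt$, repeats the same cutoff/interior-regularity comparison to replace $e^{t\Delta_\Omega}\operatorname{div}(g)$ by $e^{t\Delta_{\mathbb{R}^N}}\operatorname{div}(\chi g)$ up to a pointwise-controlled error, and then invokes the commutation $(-\Delta_{\mathbb{R}^N})^{-\beta}\operatorname{div}=\operatorname{div}(-\Delta_{\mathbb{R}^N})^{-\beta}$ on $\mathbb{R}^N$. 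Your duality argument can also be made to work, but be careful: after integrating by parts you need $\nabla(-\Delta_\Omega)^{-\beta}\varphi$ only on $\supp g\subset\Omega\setminus\Omega_{\varepsilon_0}$, and it is this restriction of the \emph{evaluation point} $x$ (not any support condition on $\varphi$, which has none) that makes the kernel defect harmless---your last sentence attributes the gain to $\supp g$ in a way that could be misread. The paper's direct approach sidesteps this bookkeeping.
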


\begin{proof}
	{\bf 1.} We have
	\begin{align*}
	(-\Delta)^{\beta}h(x)=C_\beta\int_{0}^{\infty}t^{-1-\beta}(h(x)-v(x,t))dt
	\end{align*}
	where $v$ is a solution of problem
	\begin{align*}
	\left\{
	\begin{array}
	[c]{l}%
	\partial_tv-\Delta v=0~~\text{in }\Omega\times (0,\infty),\\
	v=0~~~~~~~~~~~~\text{on}
	~~\partial\Omega\times (0,\infty) 	\\
	v(0)=h~~~~~~~~~~~~~~~~\text{in}
	~~\Omega 	\\
	\end{array}
	\right.
	\end{align*}

	Since $\supp(h)\subset \Omega\backslash\Omega_{\varepsilon_0}$,
	\begin{align*}
	||v||_{C^{1,2}(\overline{\Omega}_{\varepsilon_0/2}\times [0,T])}\leq C||h||_{L^1(\Omega)}.
	\end{align*}
	Let $\chi\in C_c^\infty(\Omega)$ be such that $\chi=1$  in $\Omega\backslash\Omega_{\varepsilon_0/4} $ and $\chi=0$ in $\Omega_{\varepsilon_0/8}$. We have
	\begin{align*}
	\left\{
	\begin{array}
	[c]{l}%
	\partial_t(\chi v)-\Delta (\chi v)=-\Delta\chi v-2\nabla\chi\nabla v~~\text{in }\mathbb{R}^N\times (0,\infty),\\
	\chi v(0)=\chi h~~~~~~~~~~~~~~~~\text{in}
	~~\mathbb{R}^N
	\\
	\end{array}
	\right.
	\end{align*}
	Set $V=e^{t\Delta_{\mathbb{R}^N}}(\chi h)-\chi(x) v(t,x)$. Then,
	\begin{align*}
	\left\{
	\begin{array}
	[c]{l}%
	\partial_tV-\Delta V=-\Delta\chi v-2\nabla\chi\nabla u~~\text{in }\mathbb{R}^N\times (0,\infty),\\
	V(0)=0~~~~~~~~~~~~~~~~\text{in}
	~~\mathbb{R}^N
	\\
	\end{array}
	\right.
	\end{align*}
	Clearly, $
	||V||_{C^{2,1}(\mathbb{R}^N\times(0,\infty))}\leq 	C||v||_{C^{1,2}(\overline{\Omega}_{\varepsilon_0/2}}\leq C||h||_{L^1(\Omega)}.$
	So,
$$	|V(t,x)| \leq  C\min\{t,t^{-4}\}||g||_{L^1(\Omega)}\,,
$$
and
	\begin{align*}
	&||(-\Delta)^{\beta}h||_{L^p(\Omega)}=	||(-\Delta)_{\mathbb{R}^N}^{\beta}(\chi h)+C_\beta\int_{0}^{\infty}t^{-1-\beta}V(.,t)dt||_{L^p(\Omega)}\\&\leq C ||(-\Delta)_{\mathbb{R}^N}^{\beta}(\chi h)||_{L^p(\mathbb{R}^N)}+C||h||_{L^1(\Omega)}
	\leq C||h||_{W^{2\beta,p}(\Omega)}\,,
	\end{align*}
	which implies \eqref{es55}.

	{\bf 2.} We have
	\begin{align*}
	(-\Delta)^{-\beta}\operatorname{div}(g)(x)=C_\beta\int_{0}^{\infty}t^{-1+\beta}w(x,t)dt
	\end{align*}
	where $v$ is a solution of
	\begin{align*}
	\left\{
	\begin{array}
	[c]{l}%
	\partial_tw-\Delta w=0~~\text{in }\Omega\times (0,\infty),\\
	w=0~~~~~~~~~~~~\text{on}
	~~\partial\Omega\times (0,\infty)
	\\
	w(0)=\operatorname{div}(g)~~~~~~~~~~~~~~~~\text{in}
	~~\Omega
	\\
	\end{array}
	\right.
	\end{align*}
	Since $\supp(g)\subset \Omega\backslash\Omega_{\varepsilon_0}$,
	\begin{align*}
	||v||_{C^{1,2}(\overline{\Omega}_{\varepsilon_0/2}\times [0,T])}\leq C||g||_{L^1(\Omega)}.
	\end{align*}
	As above, we get
	\begin{align*}
	|e^{t\Delta_{\mathbb{R}^N}}\operatorname{div}(\chi g)(x)-\chi (x)w(x,t)|\leq C\min\{t,t^{-4}\}||g||_{L^1(\Omega)}\,,
	\end{align*}
thus,
	\begin{align*}
		||(-\Delta)^{-\beta}\operatorname{div}(g)||_{L^p(\Omega)}&=	||(-\Delta)_{\mathbb{R}^N}^{-\beta}\operatorname{div}(\chi g)-C_\beta\int_{0}^{\infty}t^{-1+\beta}e^{t\Delta_{\mathbb{R}^N}}\operatorname{div}(\chi g)(x)-\chi (x)w(x,t)dt||_{L^p(\Omega)}\\&\leq C ||(-\Delta)_{\mathbb{R}^N}^{-\beta}\operatorname{div}(\chi g)||_{L^p(\mathbb{R}^N)}+C||g||_{L^1(\Omega)}
		\\&\leq C ||\operatorname{div}\left((-\Delta)_{\mathbb{R}^N}^{-\beta}\chi g\right)||_{L^p(\mathbb{R}^N)}+C||g||_{L^1(\Omega)}\\&\leq C||g||_{W^{1-2\beta,p}(\Omega)}
	\end{align*}
which implies \eqref{es54}.  The proof is complete.
\end{proof}


\section*{Comments and related problems}\label{sec.comm}

\noindent $\bullet$ We could do the same program with the spectral Laplacian replaced
by the other standard option, the so-called natural or restricted Laplacian on bounded domains. Other more  general integro-differential operators could also be considered.

\noindent $\bullet$ Concerning  similar problems posed on bounded domains, there is much recent work for porous medium equations involving nonlocal fractional operators  in the case of the model equation
\begin{equation}\label{eq.fpme}
 \partial_t u+(-\Delta)^{s} (F(u))=0
\end{equation}
usually for $F(u)=cu^m$, $m>0$. This includes the references  \cite{BV2014, BV2015, BSiV2015, BV.Bdd2016, BFV2016, BFR2017}. Higher regularity is treated in \cite{vpqrJEMS} and \cite{BFR2017}. The linear case is treated in \cite{BSiV2016}, and a case with $m<0$ in \cite{BSgV2016}.

As in the just mentioned model, we also want to address a number of questions. Our present model seems to be more difficult to analyze.

\noindent $\bullet$ There is a very important question of uniqueness for our model that cold be solved in one space dimension by using the viscosity ideas of \cite{BKM10}.

\noindent $\bullet$ Questions of regularity that must be proved: $C^{\alpha} $ regularity, higher regularity. Also the question of potential estimates.

\noindent $\bullet$ Question of finite speed of propagation, cf. works \cite{StTsVz15, StTsVz16, StTsVz17} for problems posed in the whole space. Regularity of free boundary problems, with open questions even  for PME with nonlocal pressure, \cite{CV1}.

\noindent $\bullet$ Questions of asymptotic behaviour, cf. the work \cite{BSiV2015} for equation \eqref{eq.fpme}.

\noindent $\bullet$ An interesting case in which a related problem is treated in a bounded domain concerns the work of Serfaty et al. \cite{AMS, AmSr08, SerVaz} on equations of superconductivity, which formally corresponds to $m_1=m_2=1$ with $s=1$.

 \noindent $\bullet$  In order to study the Cauchy problem in $\mR^N$, we may use as approximations the problems posed in a sequence of balls $B_{R_n}(0)$. Using the previous results in bounded domains (Theorems \ref{mainthm}, \ref{pro-Dis-data}, \ref{pro-measuredata}), and passing then to the limit $R_n\to \infty$ we can obtain existence and estimates for the solutions of the same equation posed in the whole space  with bounded and integrable data, or with merely integrable data, or bounded Radon measures. This is to be compared with the previous results of \cite{BiImKa, StTsVz16, StTsVz17}. Note that in these references only one nonlinearity is considered at a time, the approach is different, and $f=0$. This proposal needs careful elaboration.


\

\noindent {\large \sc Acknowledgment}

\noindent  The first named author has been supported by  the Centro De Giorgi, Scuola Normale Superiore, Pisa, Italy. The Second named author  has been supported by the Spanish Project MTM2014-52240-P.

\newpage


\small

\noindent {\sc Keywords.}  Nonlocal nonlinear parabolic equations,  fractional Laplacian on a bounded domain.

\medskip

\noindent{\sc Mathematics Subject Classification}. 35R11, 35K61, 35K65.

\end{document}